\begin{document}

\newtheorem{Thm}{Theorem}[section] 
\newtheorem{TitleThm}[Thm]{} 
\newtheorem{Corollary}[Thm]{Corollary} 
\newtheorem{Proposition}[Thm]{Proposition} 
\newtheorem{Lemma}[Thm]{Lemma} 
\newtheorem{Conjecture}[Thm]{Conjecture} \theoremstyle{definition} 
\newtheorem{Definition}[Thm]{Definition} \theoremstyle{definition} 
\newtheorem{Example}[Thm]{Example} 
\newtheorem{TitleExample}[Thm]{} 
\newtheorem{Remark}[Thm]{Remark} 
\newtheorem{SimpRemark}{Remark} 
\renewcommand{\theSimpRemark}{}

\numberwithin{equation}{section}

\newcommand{\C}{{\mathbb C}}
\newcommand{\bbH}{{\mathbb H}}
\newcommand{\R}{{\mathbb R}} 
\newcommand{\Z}{{\mathbb Z}}
\newcommand{\RR}{{\mathbb{R}}}

\newcommand{\flushpar}{\par \noindent}
\newcommand{\codim}{{\rm codim}\,}
\newcommand{\Diff}{{\rm Diff}\,} 
\newcommand{\wt}{{\rm wt}\,}
\newcommand{\pd}[2]{\dfrac{\partial#1}{\partial#2}}
\def \dim {{\rm dim}\,}
\newcommand{\bdyM}{\partial M} 
\def \mod {{\rm mod}\;}

\newcommand{\proj}{{\rm proj}} 

\def \ba {\mathbf {a}}
\def \bb {\mathbf {b}}
\def \bB {\mathbf {B}}
\def \bc {\mathbf {c}}
\def \be {\mathbf {e}}
\def \bone {\boldsymbol {1}}
\def \bh {\mathbf {h}}
\def \bk {\mathbf {k}}
\def \bm {\mathbf {m}}
\def \bn {\mathbf {n}}
\def \bN {\mathbf {N}}
\def \bp {\mathbf {p}}
\def \bt {\mathbf {t}}
\def \bT {\mathbf {T}}
\def \bu {\mathbf {u}}
\def \bv {\mathbf {v}}
\def \bx {\mathbf {x}}
\def \bw {\mathbf {w}}
\def \b1 {\mathbf {1}}
\def \bga {\boldsymbol \alpha}
\def \bgb {\boldsymbol \beta}
\def \bgg {\boldsymbol \gamma}
\def \bge {\boldsymbol \epsilon}

\def \itc {\text{\it c}}
\def \iti {\text{\it i}}
\def \itj {\text{\it j}}
\def \itm {\text{\it m}}
\def \itM {\text{\it M}}  
\def \ithn {\text{\it hn}}
\def \itt {\text{\it t}}

\def \cA {\mathcal{A}}
\def \cB {\mathcal{B}}
\def \cC {\mathcal{C}}
\def \cD {\mathcal{D}}
\def \cE {\mathcal{E}}
\def \cF {\mathcal{F}}
\def \cG {\mathcal{G}}
\def \cH {\mathcal{H}}
\def \cK {\mathcal{K}}
\def \cL {\mathcal{L}}
\def \cM {\mathcal{M}}
\def \cN {\mathcal{N}}
\def \cO {\mathcal{O}}
\def \cP {\mathcal{P}}
\def \cQ {\mathcal{Q}}
\def \cR {\mathcal{R}}
\def \cS {\mathcal{S}}
\def \cT {\mathcal{T}}
\def \cU {\mathcal{U}}
\def \cV {\mathcal{V}}
\def \cW {\mathcal{W}}
\def \cX {\mathcal{X}}
\def \cY {\mathcal{Y}}
\def \cZ {\mathcal{Z}}

\def \ga {\alpha}
\def \gb {\beta}
\def \gg {\gamma}
\def \gd {\delta}
\def \ge {\epsilon}
\def \gevar {\varepsilon}
\def \gk {\kappa}
\def \gl {\lambda}
\def \gs {\sigma}
\def \gt {\tau}
\def \gw {\omega}
\def \gz {\zeta}
\def \gG {\Gamma}
\def \gD {\Delta}
\def \gL {\Lambda}
\def \gS {\Sigma}
\def \gW {\Omega}

\def \dim {{\rm dim}\,}
\def \mod {{\rm mod}\;}

\title[Lorentzian Geodesic Flows]
{Lorentzian Geodesic Flows and Interpolation between Hypersurfaces in Euclidean Spaces}
\author[J. Damon]{James Damon} 
\thanks{Partially supported by DARPA grant HR0011-05-1-0057 and 
National Science Foundation grant DMS-1105470}
\address{Department of Mathematics \\
University of North Carolina \\
Chapel Hill, NC 27599-3250  \\
USA}


\begin{abstract}
We consider geodesic flows between hypersurfaces in $\R^n$.  However, rather than consider using geodesics in $\R^n$, which are straight lines, we consider an induced flow using geodesics between the tangent spaces of the hypersurfaces viewed as affine hyperplanes.  For naturality, we want the geodesic flow to be invariant under rigid 
transformations and homotheties.  Consequently, we do not use the dual projective space, as the geodesic flow in this space is not preserved under translations.  Instead we give an alternate approach using a Lorentzian space, which is semi-Riemannian with a metric of index $1$. \par
For this space for points corresponding to affine hyperplanes in $\R^n$, we give a formula for the geodesic between two such points.  As a consequence, we show the geodesic flow is preserved by rigid transformations and homotheties of $\R^n$.  Furthermore, we give a criterion that a vector field in a smoothly varying family of hyperplanes along a curve yields a Lorentzian parallel vector field for the corresponding curve in the Lorentzian space.  As a result this provides a method to extend an orthogonal frame in one affine hyperplane to a smoothly \lq\lq Lorentzian varying\rq\rq family of 
orthogonal frames in a family of affine hyperplanes along a smooth curve, as well as a interpolating between two such frames with a smooth \lq\lq minimally Lorentzian varying\rq\rq family of orthogonal frames. \par
We further give sufficient conditions that the Lorentzian flow from a hypersurface is nonsingular and that the resulting corresponding flow in $\R^n$ is nonsingular.  This is illustrated for surfaces in $\R^3$. 
\end{abstract}

\keywords{Interpolation between hypersurfaces, Lorentzian spaces, geodesic flow, Lorentzian parallel vector fields along curves, envelopes of hyperplanes, nonsingularity of flows} 
\subjclass{Primary: 53B30, 53D25, Secondary: 53C44}

\maketitle
\vspace{3ex}
\centerline{PRELIMINARY VERSION}
\vspace{3ex}

\section*{\bf Introduction} 
\par

We consider the problem of constructing a natural diffeomorphic flow 
between hypersurfaces $M_0$ and $M_1$ of $\R^n$ which is in some sense 
both \lq\lq natural\rq\rq and \lq\lq geodesic\rq\rq viewed in some 
appropriate space (as in figure   ).  
\par
\begin{figure}[ht]
\centerline{
\includegraphics[width=10cm]{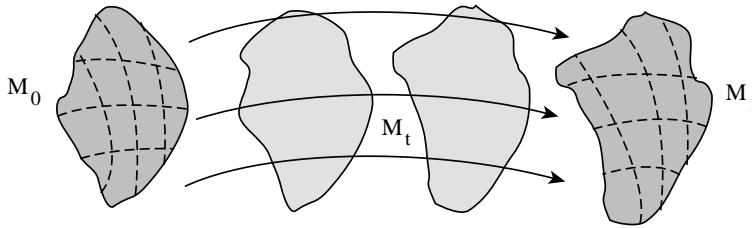}}
\caption{\label{fig.3}  Diffeomorphic Flow between hypersurfaces of 
Euclidean space induced by a \lq\lq Geodesic Flow\rq\rq in an associated 
space}
\end{figure} 
 \par
There are several approaches to this question.  One is from the 
perspective  
of a Riemannian metric on the group of diffeomorphisms of $\R^n$.  If the 
smooth hypersurfaces $M_i$ bound compact regions $\gW_i$ , then the 
group of diffeomorphisms $\Diff(\R^n)$ acts on such regions $\gW_i$  and 
their boundaries.  Then,  if $\varphi_t, 1\leq t \leq 1,$ is a geodesic in 
$\Diff(\R^n)$ beginning at the identity, then $\varphi_t(\gW)$ (or 
$\varphi_t(M_i)$) provides a path interpolating between $\gW_0 = 
\varphi_0(\gW) = \gW$ and $\gW_1 = \varphi_1(\gW)$.  Then, the geodesic 
equations can be computed and numerically solved to construct the flow 
$\varphi_t$.  This is the method developed by Younes, Trouve, Glaunes 
\cite{Tr}, \cite{YTG}, \cite{BMTY}, \cite{YTG2}, and Mumford, Michor 
\cite{MM}, \cite{MM2} etc.  \par
An alternate approach which we consider in this paper requires that we 
are given a correspondence between $M_0$ and $M_1$, defined by a 
diffeomorphism $\chi : M_0 \to M_1$, which need not be the restriction of 
a global diffeomorphism of $\R^n$ (and the $M_i$ may have boundaries).  
Then, if we map $M_0$ and $M_1$ to submanifolds of a natural ambient 
space $\gL$, we can seek a \lq\lq geodesic flow\rq\rq between $M_0$ and 
$M_1$, viewed as submanifolds of $\gL$, sending $x$ to $\varphi(x)$ along 
a geodesic.  Then, we use this geodesic flow to define a flow between 
$M_0$ and $M_1$ back in $\R^n$.  \par
The simplest example of this is the \lq\lq radial flow\rq\rq from $M_0$ 
using the vector field $U$ on $M_0$ defined by $U(x) = \varphi(x) - x$.  
Then, the radial flow is the geodesic flow in $\R^n$ defined by 
$\varphi_t(x) = x + t\cdot U(x)$.  The analysis of the nonsingularity of the 
radial flow is given in \cite{D1} in the more general context of 
\lq\lq skeletal structures\rq\rq.  
This includes the case where $M_1$ is a \lq\lq generalized offset 
surface\rq\rq  of $M_0$ via the generalized offset vector field $U$.  \par
\begin{figure}[ht]
\begin{center} 
\includegraphics[width=5cm]{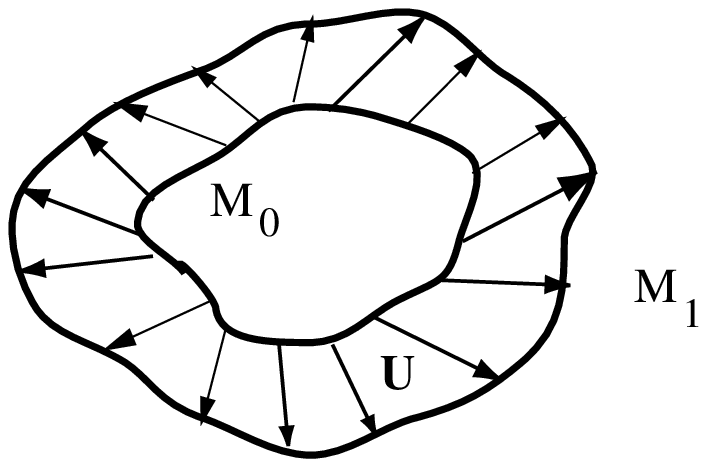}  \hspace{.25in} 
\includegraphics[width=5cm]{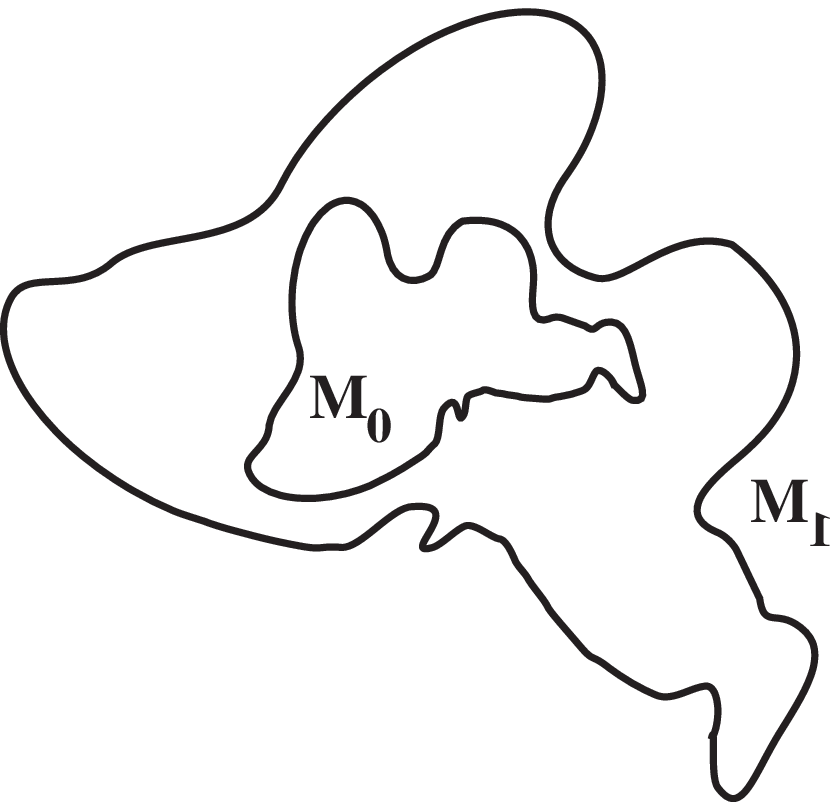}
\end{center}
\hspace{0.4in} (a) \hspace{1.8in} (b) 
 
\caption{\label{fig.2} a) Hypersurface $M_0$ and radial vector field $U$ 
define a generalized offset surface $M_1$ obtained from a radial flow of 
the skeletal structure $(M_0, U)$.  This gives a nonsingular \lq\lq Geodesic 
Flow\rq\rq in $\R^n$.  In b) there is no nonsingular geodesic flow in $\R^n$ 
from $M_0$ to $M_1$; however, there is a nonsingular Lorentzian geodesic
 flow from $M_0$ to $M_1$ (see \S \ref{Sec:ind.flow})}  
\end{figure} 
\par

In this paper, we give an alternate approach to interpolation via a geodesic 
flow between hypersurfaces with a given correspondence.  While the radial 
flow views each hypersurface as a collection of points, we will instead view 
each as defined by their collection of tangent spaces.  This leads to consideration 
of geodesic flows between \lq\lq dual varieties\rq\rq.  The dual 
varieties traditionally lie in the \lq\lq dual projective space\rq\rq.  However, 
 the geodesic flow induced on the dual projective space with its natural 
Riemannian metric does not have certain natural properties that are desirable, 
such as invariance under translation.  Instead, we shall define in \S 
\ref{Sec:Lorentz.map} a \lq Lorentzian map\rq\rq to a hypersurface 
$\cT^n$ in the Lorentzian space $\gL^{n+1}$ defined by their tangent 
spaces as affine hyperplanes in $\R^{n+1}$.  So instead of representing 
hypersurfaces in terms of \lq\lq dual varieties\rq\rq, we instead 
represent them as subspaces of $\gL^{n+1}$, which is the subspace of 
points in Minkowski space $\R^{n+2, 1}$ of Lorentzian norm $1$.  Then, we 
use the geodesic flow for the Lorentzian metric on $\gL^{n+1}$, and then 
transform that geodesic flow back to a flow between the original 
manifolds in $\R^n$.  \par
To do this we determine in \S \ref{Sec:geo.deS} the explicit form for the 
Lorentzian geodesics in $\gL^{n+1}$ between points in $\cT^n$.  We show 
these geodesics lie in $\cT^n$ and show in \S \ref{Sec:spec.cas}  that 
these geodesics are invariant under the extended Poincare group.  Given a 
Lorentzian geodesic between two points in $\cT^n$, there corresponds a 
smooth family of hyperplanes $\Pi_t$.  \par 
We further give in \S \ref{Sec:par.frms} a criterion for \lq\lq Lorentzian parallel 
vector fields\rq\rq\, in a family of hyperplanes $\Pi_t$ along a curve 
$\gg(t)$ in $\R^n$, and then determine the Lorentzian parallel vector fields over 
a Lorentzian geodesic corresponding to vector fields with values in $\Pi_t$.  Using 
this, we determine for an orthonormal frame $\{e_{i, 0}\}$ in $\Pi_0$,  a 
smooth family of orthonormal frames $\{e_{i, t}\}$ in $\Pi_t$ which 
correspond to a Lorentzian parallel family of frames along the Lorentzian 
geodesic.  Using this we further determine a method for interpolating 
between orthonormal frames $\{e_{i, 0}\}$ in $\Pi_0$ and $\{e_{i, 1}\}$ in 
$\Pi_1$.  \par
In \S\S \ref{Sec:dual.var} and \ref{Sec:smoot.lev} we relate the properties 
of hypersurfaces $\tilde M$ of $\cT^n$ with corresponding properties of 
the envelopes formed from the planes defined by $\tilde M$.  In \S 
\ref{Sec:dual.var} we give a diffeomorphism between the Lorentzian space 
$\cT^n$ with the dual projective space $\R P^{n\, \vee}$, which is a 
Riemannian manifold.  The classification of generic Legendrian 
singularities in $\R P^{n\, \vee}$ gives the form of the singular points of 
images and this is used to give criteria for the lifting to a hypersurface in 
$\R^{n+1}$ as the envelope of the family of corresponding hyperplanes.  
\par
Finally, in section \ref{Sec:ind.flow}  we give in Theorem \ref{Thm6.1} the 
existence and continuous dependence of the corresponding \lq\lq 
Lorentzian geodesic flow\rq\rq\, between two hypersurfaces $M_0$ and 
$M_1$ in $\R^{n+1}$  and in Theorem \ref{Thm9.2} we give a sufficient 
condition for the flow to be nonsingular.  As a special case we consider in 
\S \ref{Sec:surf.case} the results for surfaces in $\R^3$. 
\par

 \vspace{5ex}
\newpage
\centerline{CONTENTS}
 \vspace{2ex}
\begin{enumerate}

\item	Overview
\vspace{2ex} \par
\item  Semi-Riemannian Manifolds and Lorentzian Manifolds 
\par 
\vspace{2ex} 
\par
\item	Definition of the Lorentzian Map
\par
\vspace{2ex} 
\par
\item	Lorentzian Geodesic Flow on $\gL^{n+1}$
\par 
\vspace{2ex} 
\par
\item	Invariance of Lorentzian Geodesic Flow and Special Cases 
\par
\vspace{2ex}
\par
\item	Families of Lorentzian Parallel Frames on Lorentzian Geodesic 
Flows
\par
\vspace{2ex}
\par
\item	Dual Varieties and Singular Lorentzian Manifolds
\par 
\vspace{2ex}
\par
\item	Sufficient Condition for Smoothness of Envelopes
\vspace{2ex}
\par
\item	Induced Geodesic Flow between Hypersurfaces
\par 
\vspace{2ex}
\par
\item	 Results for the Case of Surfaces in $\R^3$ 
\vspace{2ex}  
\par

\vspace{2ex}
\end{enumerate}

\vspace{3ex}

 \newpage
\section{\bf Overview}
\label{Sec:overview} 
\par
As mentioned in the introduction there are two main methods for 
deforming 
one given hypersurface $M_0 \subset \R^n$ to another $M_1$.  One is to 
find a path $\psi_t$ in $G$, which is some specified a group of 
diffeomorphisms of $\R^n$, from the identity so that $\psi_1(M_0) = M_1$ 
(and $\psi_0(M_0) = M_0$).  \par  
Another approach involves constructing a geometric flow between $M_0$ 
and 
$M_1$.  Several flows such as curvature flows do not provide a flow to a 
specific hypersurface such as $M_1$.  An alternate approach which we 
shall 
use will assume that we have a correspondence given by a diffeomorphism 
$\chi : M_0 \to M_1$ and construct a \lq\lq geodesic flow\rq\rq which at 
time $t = 1$ gives $\chi$.  The geodesic flow will be defined using an 
associated space $\cY$.  We shall consider natural maps 
$\varphi_i : M_i \to \cY$ , where $\cY$ is a distinguished space which 
reflects certain geometric properties of the $M_i$. 
\begin{eqnarray}
\label{tag1.1}
{M_0} & \overset{\, \,\, \varphi_0}{\longrightarrow} &  \cY \notag\\
   {\chi \downarrow}  &    \underset{\, \,\, \varphi_1}{\nearrow}    &        
\\
 {M_1}  &       &   \notag  
\end{eqnarray}

 \par
\begin{Definition}
\label{Def1.0}
Given smooth maps $\varphi_i : M_i \to \cY$ and a diffeomorphism $\chi : 
M_0 \to M_1$  A {\em geodesic flow} between the maps $\varphi_i$ is a 
smooth map $\tilde \psi_t :  M_0 \times [0, 1] \to \cY$ such that for any 
$x 
\in M_0$, $\tilde \psi_t(x) : [0, 1] \to \cY$ is a geodesic from 
$\varphi_0(x)$ to $\varphi_1 \circ \chi (x)$ 
\end{Definition} \par
\begin{SimpRemark}
We shall also refer to the geodesic flow as being between the $\tilde M_i 
= 
\varphi_i(M_i)$. However, we note that it is possible for more than one 
$x_i 
\in M_0$ to map to the same point in $y \in \cY$, however, the geodesic 
flow 
from $y$ can differ for each point $x_i$.  
\end{SimpRemark}
Then, we will complement this with a method for finding the 
corresponding 
flow $\psi_t$ between $M_0$ and $M_1$ such that $\varphi_t \circ \psi_t 
= 
\tilde \psi_t$, where $\varphi_t: \psi_t(M_0) \to \tilde \psi_t(M_0)$.  We 
furthermore want this flow to satisfy certain properties.  A main property 
is that the flow construction is invariant under the action of the extended 
Poincare group formed from rigid transformations and homotheties (scalar 
multiplication).  
By this we mean: if $M_0^{\prime} = A(M_0)$ and $M_1^{\prime} = A(M_1)$ 
are transforms of $M_0$ and $M_1$ by a transformation $A$ formed from 
the composition of a rigid transformation and homothety, and $M_t$ is the 
flow between $M_0$ and $M_1$, then $A(M_t)$ gives 
the flow between $M_0^{\prime}$ and $M_1^{\prime}$.  \par
We are specifically interested in a \lq\lq geodesic flow\rq\rq which will 
be a flow defined using the tangent bundles $TM_0$ to $TM_1$ so that we 
specifically control the flow of the tangent spaces.  At first, an apparent 
natural choice is the dual projective space $\R P^{n \vee}$.  Via the 
tangent bundle of a hypersurface $M \subset \R^n\, (\subset \R P^n)$ there 
is the natural map $\gd : M \to  \R P^{n \vee}$, sending $x \mapsto T_xM$.   
The natural 
Riemannian structure on the real projective space $\R P^{n \vee}$ is 
induced from $S^n$ via the natural covering map $S^n \to \R P^n$, so that 
geodesics of $S^n$ map to geodesics on $\R P^{n \vee}$.  However, simple 
examples show that the induced geodesic slow on $\R P^{n \vee}$ is not 
invariant under translation in $\R^n$.  For example, this Riemannian 
geodesic flow between the hyperplanes given by $\bn \cdot \bx = c_0$ and 
$\bn \cdot \bx = c_1$ is given by $\bn \cdot \bx = c_t$,  where $c_t = 
\tan (t\arctan (c_1) + (1-t)\arctan (c_0))$.  It is easily seen that if we 
translate the two planes by adding a fixed amount $d$ to each $c_i$, then 
the corresponding formula does not give the translation of the first.  \par 
We will use an alternate space for $\cY$, namely, the Lorentzian space 
$\gL^{n+1}$ which is a Lorentzian subspace of Minkowski space 
$\R^{n+2, 1}$.   In fact the images will be in an $n$-dimensional 
submanifold $\cT^n \subset \gL^{n+1}$.  On $\gL^{n+1}$ it is classical that 
the geodesics are intersections with planes through the origin in 
$\R^{n+2, 1}$.  This allows a simple description of the geodesic flow on 
$\gL^{n+1}$.  We transfer this flow to a flow on $\R^n$ using an inverse 
envelope construction, which reduces to solving systems of linear 
equations.  We will give conditions for the smoothness of the inverse 
construction which uses knowledge of the generic Legendrian 
singularities.  \par
We shall furthermore see that the construction is invariant under the 
action of rigid transformations and homotheties.   In addition, uniform 
translations and homotheties will be geodesic flows, and a \lq\lq pseudo 
rotation\rq\rq which is a variant of uniform rotation is also a geodesic 
flow.  

\vspace{2ex}

\section{\bf Semi-Riemannian Manifolds and Lorentzian Manifolds}
\label{Sec:Rie.Lor} 
\par
A {\it Semi-Riemannian manifold} $M$ is a smooth manifold $M$, with a  
nondegenerate bilinear form $< \cdot, \cdot >_x$ on the tangent space 
$T_xM$, for eaxh $x \in M$ which smoothly varies with $x$.  We do not 
require that $< \cdot, \cdot >_x$ be positive definite.  We denote the index 
of $< \cdot, \cdot >_{(x)}$ by $\nu$.  In the case that $\nu = 1$, $M$ is 
referred to as a Lorentzian manifold.  \par
A basic example is Minkowski space which (for our purposes) is $\R^{n+2}$ 
with bilinear form defined for $v = (v_1, \dots , v_{n+2})$ and  
$w = (w_1, \dots , v_{n+2})$
$$  < v, w >_L  \quad = \quad \sum_{i = 1}^{n+1} v_i\cdot w_i -   
v_{n+2}\cdot w_{n+2}  $$
There are a number of different notations for this Minkowski space.  We shall 
use $\R^{n+2, 1}$.  We shall also use the notation $<\cdot, \cdot >_L$ for the 
Lorentzian inner product on $\R^{n+2, 1}$.  \par
A submanifold $N$ of a semi-Riemannian manifold $M$ is a 
semi-Riemannian submanifold if for each $x \in N$, the restriction of 
$< \cdot, \cdot >_{(x)}$ to $T_xN$ is nondegenerate.  
There are several important submanifolds of $\R^{n+2, 1}$.  One such is 
the Lorentzian submanifold
$$  \gL^{n+1} \,\,  = \,\, \{ (v_1, \dots , v_{n+2}) \in \R^{n+2, 1} : \sum_{i = 
1}^{n+1} v_i^2 \, -\,  v_{n+2}^2 \, = \,1\},  $$  
which is called de Sitter space (see Fig. \ref{fig.4}). 
A second important one is {\it hyperbolic space} $\bbH^{n+1}$ defined by
$$ \bbH^{n+1} \,\,  = \,\, \{ (v_1, \dots , v_{n+2}) \in \R^{n+2, 1} : 
\sum_{i = 1}^{n+1} v_i^2 \, -\,  v_{n+2}^2 \,  = \,  -1\ \,\, \mbox{and } \, 
v_{n+2} > 0\}.  $$
\par
\begin{figure}[ht]
\centerline{\includegraphics[width=5cm]{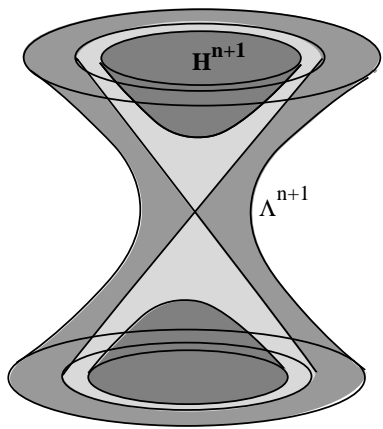}}
\caption{\label{fig.4}  In Minkowski space $\R^{n+2, 1}$, there is the 
Lorentzian hypersurface $\gL^{n+1}$ and the model for hyperbolic space 
$\bbH^{n+1}$.  Also shown is the \lq\lq light cone\rq\rq.}
\end{figure} 
 \par

By contrast the restriction of $< \cdot, \cdot >_L$ to $\bbH^{n+1}$ is a 
Riemannian metric of constant negative curvature $-1$.  There is natural 
duality between codimension $1$ submanifolds of $\bbH^ {n+1} $ obtained as the 
intersection of $\bbH^ {n+1} $ with a \lq\lq time-like\rq\rq hyperplane $\Pi$ 
through $0$ (containing a \lq\lq time-like\rq\rq vector $z$ with $<z, z>_L 
\, < 0$) paired with the points $ \pm z^{\prime} \in \gL^{n+1}$ given where 
$z^{\prime}$ lies on a line through the origin which is the Lorentzian 
orthogonal complement to $\Pi$. \par
Many of the results which hold for Riemannian manifolds also hold for a 
Semi-
Riemannian manifold $M$.  
\begin{TitleThm}[Basic properties of Semi-Riemannian Manifolds (see 
\cite{ON}] 
\label{Thm2.1} \hfill \par

For a Semi-Riemannian manifold $M$, there are the following properties 
analogous to those for Riemannian manifolds: \par
\begin{enumerate}
\item  Smooth Curves on $M$ have lengths defined using 
$| < \cdot, \cdot > |$.  
\item  There is a unique connection which satisfies the usual properties 
of a Riemannian Levi-Civita connection.
\item  Geodesics are defined locally from any point $x \in M$ and with any 
initial velocity $v \in T_xM$.  They are critical curves for the length 
functional, and they have constant speed.
\item If $N$ is a semi-Riemannian submanifold of $M$, then a constant 
speed curve $\gg(t)$ in $N$ is a geodesic in $N$ if the acceleration 
$\gg^{\prime\prime}(t)$ is normal to $N$ (with respect to the 
semi-Riemannian metric) at all points of $\gg(t)$.
\item  Any point $x \in M$ has a \lq\lq convex neighborhood\rq\rq\, $W$, 
which has the property that any two points in $W$ are joined by a unique 
geodesic in the neighborhood.
\item  If $\gg(t)$ is a geodesic joining $x_0 = \gg(0)$ and $x_1 = \gg(1)$ 
and $x_0$ and $x_1$ are not conjugate along $\gg(t)$, then given a 
neighborhood $W$ of $\gg(t)$, there are neighborhoods of $W_0$ of $x_0$ 
and $W_1$ of $x_1$ so that if $x_0^{\prime} \in W_0$, and $x_1^{\prime} 
\in W_1$, there is a unique geodesic in the neighborhood $W$ from 
$x_0^{\prime}$ to $x_1^{\prime}$.

\end{enumerate}
\end{TitleThm}

\par
Then, as an example, it is straightforward to verify that for any 
$z \in \gL^{n+1}$, the vector $z$ is orthogonal to $\gL^{n+1}$ at the point 
$z$.  Suppose $P$ is a plane in $\R^{n+2, 1}$ containing the origin.  Let 
$\gg(t)$ be a constant Lorentzian speed parametrization of the curve obtained 
by intersecting $P$ with $\gL^{n+1}$.  Then, by a standard argument similar 
to that for the case of a Euclidean sphere, $\gg(t)$ is a geodesic. 
All geodesics of $\gL^{n+1}$ are obtained in this way.  It follows that the 
submanifolds of $\gL^{n+1}$ obtained by intersecting $\gL^{n+1}$ with a 
linear subspace is a totally geodesic submanifold of $\gL^{n+1}$.  
\par 
\section{\bf Definition of the Lorentzian Map}
\label{Sec:Lorentz.map} 
\par
	We begin by giving a geometric definition of a Lorentzian map from a 
smooth hypersurface $M \in \R^n$, as a natural map from $M$ to 
$\gL^{n+1}$; and then giving that geometric definition an algebraic form.  
\par 
\subsection{Geometric Definition of the Lorentzian Map} 
\par
First, we let $S^{n}$ denote the unit sphere in 
$\R^{n+1}$ centered at the origin, and we let $\be_{n+1} = (0, \dots, 0, 1)  
\in \R^{n+1}$.  Then, stereographic projection defines a map $p: 
S^{n}\backslash \{\be_{n+1}\} \to \R^n$ sending $y$ to the point where 
the line from $\be_{n+1}$ to $y$ intersects $\R^n$.  Given a hyperplane 
$\Pi$ in $\R^n$, it together with $\be_{n+1}$ spans a hyperplane 
$\Pi^{\prime}$ in $R^{n+1}$.  We can identify $R^{n+1}$ with $R^{n+1} 
\times \{ \be_{n+2}\} \subset R^{n+2, 1}$ by translation in the direction  
$\be_{n+2}$.  The intersection of this plane with $S^{n+1}$ is an 
$n$-sphere.  Then, via this identification of $R^{n+1}$ with the 
hyperplane in $\R^{n+2, 1}$ defined by $x_{n+2} = 1$, we form the 
hyperplane $\Pi^{\prime\prime}$ in $\R^{n+2, 1}$ spanned by 
$\Pi^{\prime}$ together with $0$.  This hyperplane is time-like because 
$\Pi^{\prime\prime}$ intersects $R^{n+1} \times \{ \be_{n+2}\}$ in a 
hyperplane $\Pi^{\prime}$ which intersects the unit sphere in $R^{n+1} 
\times \{ \be_{n+2}\}$ in a sphere, hence it intersects the interior disk.  
Then, the duality defined by the Lorentzian inner product associates to the 
hyperplane $\Pi^{\prime\prime}$ the Lorentzian orthogonal line $\ell$ 
through the origin.  As the hyperplane is time-like, $\ell$ has non-empty 
intersection with $\gL^{n+1}$ in a pair of points $z$ and $-z$.  
\par
In order to obtain a single valued map, there are two possibilities:  Either 
we consider the induce map to $\tilde \gL^{n+1} =  \gL^{n+1}/\sim$, where 
$\sim$ identifies each pair of points $z$ and $-z$  of $\gL^{n+1}$; or we 
need on $\Pi$ a unit vector field $\bn$ orienting $\Pi$.  Given 
the normal vector $\bn$, it defines a distinguished side of $\Pi$.  Then we 
obtain a distinguished side for $\Pi^{\prime}$ and then 
$\Pi^{\prime\prime}$, which singles out one of the two points in 
$\gL^{n+1}$ on the distinguished side.  We shall refer to this second case 
as the oriented case.  We shall use both versions of the maps. \par 
The geometric definition is then as follows. 
\begin{Definition}
\label{Def3.1a}
Given a smooth hypersurface $M \in \R^n$,  with a smooth normal vector 
field $\bn$ on $M$, the {\em (oriented) Lorentz map} is the natural map 
$\cL : M \to \gL^{n+1}$ defined by $\cL (x) = z$, where to $\Pi = T_xM$ is 
associated the plane $\Pi^{\prime\prime}$, Lorentzian orthogonal line 
$\ell$, and the distinguished intersection $z$ with $\gL^{n+1}$.  \par
In the general case where we do not have an orientation for $M$, we define  
$\tilde \cL : M \to \tilde \gL^{n+1}$ by $\tilde \cL (x)$ is the equivalence 
class of $\pm z$ in $\tilde \gL^{n+1}$.  
\end{Definition}
In fact, from the algebraic form of this map to follow, we shall see that it 
actually maps into an $n$ dimensional submanifold $\cT^n$ of $\gL^{n+1}$.
	We give a specific algebraic form for this map. 
\par
\subsection{Algebraic (Coordinate) Definition of the Lorentzian Map} 
\par
We can give a coordinate definitions for the maps.  If $T_xM$ is defined by 
$\bn\cdot \bx  = c$, where $\bx = (x_1, \dots , x_n)$.  Then, 
$\Pi^{\prime}$ contains $T_xM$ and $\be_{n+1}$ and so is defined by 
$\bn\cdot \bx + c x_{n+1}  = c$.  Then, $\Pi^{\prime\prime}$ contains 
$\Pi^{\prime} \times \{ \be_{n+2}\}$ and the origin so it is defined by 
$\bn\cdot \bx + cx_{n+1} - c x_{n+2} = 0$.  Thus, the Lorentzian orthogonal 
line $\ell$ is spanned by $(\bn , c, c)$, which we write in abbreviated 
form as $(\bn , c\bge)$ with $\bge = (1, 1)$.  Hence, the map $\cL : M \to 
\gL^{n+1}$ sends $x$ to $(\bn , c\bge)$, and the general case sends it to 
the equivalence class in $\tilde \cT^n$ determined by $(\bn , c\bge)$.  We 
shall be concerned with a subspace of $\gL^{n+1}$ where this duality 
corresponds to hypersurfaces of $\R^n$.  The general correspondence is 
used in \cite{OH} to parametrize $(n-1)$-dimensional spheres in $\R^n$.  
\par
We need on $M$ a smooth normal unit vector field $\bn$ orienting $M$.  
Given the normal vector field $\bn$, it defines a distinguished side of 
$T_xM$.
\par
In fact, the image lies in the submanifold $\cT^n$ of $\gL^{n+1}$ defined 
by 
$$  \cT^n \quad = \quad \{ (\bn, c\bge) : \bn \in S^{n-1} , c \in \R\}   $$ 
which we can view as a submanifold $\cT^n \subset \gL^{n+1}$; or in the 
general case it lies in $\tilde \cT^n$.  
\begin{Definition}
\label{Def3.1}
Given a smooth hypersurface $M \in \R^n$,  with a smooth normal vector 
field $\bn$ on $M$, the {\em (oriented) Lorentz map} is the natural map 
$\cL : M \to \cT^n$ defined by $\cL (x) = (\bn , c\bge)$, where $T_xM$ is 
defined by $\bn \cdot\bx = c$.  In the general case, we choose a local 
normal vector field  and then $\tilde \cL (x)$ is the equivalence class of 
$(\bn , c\bge)$ in $\tilde \cT^n$.  
\end{Definition}
\par
In the following we shall generally concentrate on the oriented case and 
the map $\cL$, with the general case just involving considering the map to 
equivalence classes.  \par 
Using $\cL$ or $\tilde \cL$, we are led to considering the geodesic flow in 
$\gL^{n+1}$, and obtain the induced geodesic flow on $\tilde \gL^{n+1}$.   
Once we have determined the geodesic flow between points in $\cT^n$, 
there 
are two questions concerning $\cL$ to lift the flow back to hypersurfaces 
in $\R^n$.  One is when $\cL$ is nonsingular, and at singular points what 
can we say about 
the local properties of $\cL$ when $M$ is generic.  The second question is 
how we may construct the inverse of $\cL$ when it is a local embedding 
(or immersion).  

\section{\bf Lorentzian Geodesic Flow on $\gL^{n+1}$}
\label{Sec:geo.deS} 
\par
We give the general formula for the geodesic flow between points  $z_0 = 
(\bn_0, d_0 \bge)$ and $z_1 = (\bn_1, d_1 \bge)$ in $\cT^n$.  \par
\subsection*{Several Auxiliary Functions} \hfill 
\par 
To do so we introduce several auxiliary functions.  
We first define the function $\gl (x, \theta)$ by

\begin{equation}
\label{Eqn3.1} 
 \gl (x, \theta) \quad   =   \quad
\Biggl\{ \begin{matrix} \frac{\sin(x\theta)}{\sin(\theta)}  \quad  & \theta 
\neq 0 \\ \,\, x \quad & \theta = 0
\end{matrix} \Biggr. 
\end{equation}
Then, $\sin(z)$ is a holomorphic function of $z$, and the quotient 
$\frac{\sin(x\theta)}{\sin(\theta)}$ has removable singularities along 
$\theta = 0$ with value $x$.  Hence, $\gl (z, \theta)$ is a holomorphic 
function of $(z, \theta)$ on $\C \times ((-\pi , \pi) \times \iti\,\R)$, and 
so analytic on $\R \times (-\pi , \pi)$.  Also, directly computing the 
derivative we obtain 
\begin{equation}
\label{Eqn3.2} 
 \pd{\gl((x, \theta)}{x}   \quad   =   \quad
\Biggl\{ \begin{matrix}  \cos( x\theta) \cdot \frac{\theta}{\sin \theta} 
 \quad  & \theta \neq 0 \\ \,\, 1 \quad & \theta = 0
\end{matrix} \Biggr. 
\end{equation}
\par
\begin{SimpRemark}
In fact, we can recognize $\gl (n, \theta)$ for integer values $n$ as the 
characters for the irreducible representations of $SU(2)$  restricted to 
the 
maximal torus.
\end{SimpRemark}
We also introduce a second function for later use in \S \ref{Sec:spec.cas}.  
For $- \frac{\pi}{2} < \theta < \frac{\pi}{2}$, we define
$$\mu (x, \theta) \, \, = \,\, \frac{\cos(x\theta)}{\cos(\theta)}.  $$ 
Then, there is the following relation 
\begin{equation}
\label{Eqn3.2a}
\gl(x, \theta) + \gl(1-x, \theta)  \,\, = \,\, \mu (1 -2x, \frac{\theta}{2})   
\end{equation}
This follows by using the basic trigonometric formulas  $\sin (x) + \sin 
(y) =  
2 \cos (\frac{1}{2} ( x - y)) \sin(\frac{1}{2} ( x + y)) $ and $\sin{\theta} = 
2 
\sin (\frac{1}{2}\theta)  \cos (\frac{1}{2}\theta)$.  There are additional 
relations between these two functions that follow from other basic 
trigonometric identies.
\,\, 
\par
\subsection*{Geodesic Curves in $\gL^{n+1}$ joining points in $\cT^n$} 
\hfill 
\par
We may express the geodesic curve between $z_0 = (n_0, c_0 \ge)$ and 
$z_1 = (n_1, c_1 \ge)$ in 
$\gL^{n+1}$ using $\gl (x, \theta)$ provided $n_1 \neq  -n_0$.  
We let $- \frac{\pi}{2} < \theta < \frac{\pi}{2}$ be defined by $\cos \theta 
= \bn_0\cdot \bn_1$.  

\begin{Proposition}
\label{prop4.1}
Provided $n_1 \neq  -n_0$, the geodesic curve $\gg(t)$ in $\gL^{n+1}$ 
between points $\gg(0) = z_0$ and $\gg(1) = z_1$ in $\cT^n$ for the 
Lorentzian metric on $\gL^{n+1}$ is given by
\begin{equation}
\label{Eqn3.4}
  \gg(t) \quad = \quad \gl(t, \theta)\, z_1 \, + \,  \gl(1-t, \theta)\, z_0   
\qquad \mbox{for }   0 \leq t \leq 1
\end{equation}
Furthermore, this curve lies in $\cT^n$ for $0 \leq t \leq 1$.  Hence, 
$\cT^n$ is a geodesic submanifold of $\gL^{n+1}$. 
\end{Proposition}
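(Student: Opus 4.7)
The plan is to verify directly that the stated formula meets the characterization of Lorentzian geodesics in $\gL^{n+1}$ recorded at the end of Section~\ref{Sec:Rie.Lor}: a constant Lorentzian-speed parametrization of a curve cut out by intersecting $\gL^{n+1}$ with a $2$-plane through the origin of $\R^{n+2,1}$. So four things must be checked: (i) $\gg(t)$ lies in the $2$-plane $P = \mathrm{span}(z_0,z_1)$; (ii) $\gg(0)=z_0$ and $\gg(1)=z_1$; (iii) $\gg(t)\in\gL^{n+1}$ for all $t$; and (iv) $\gg$ has constant Lorentzian speed. Items (i) and (ii) are immediate: $\gg(t)$ is by construction a linear combination of $z_0$ and $z_1$, and $\gl(0,\theta)=0$, $\gl(1,\theta)=1$.

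For (iii) and (iv) the starting point is the three Lorentzian pairings
$$\langle z_0,z_0\rangle_L = \langle z_1,z_1\rangle_L = |\bn_0|^2 = 1, \qquad \langle z_0,z_1\rangle_L = \bn_0\cdot\bn_1 = \cos\theta,$$
where the cancellation of the $c$-components reflects $\bge = (1,1)$. Expanding $\langle \gg(t),\gg(t)\rangle_L$ then reduces (iii) to the trigonometric identity
$$\sin^2(t\theta) + \sin^2((1-t)\theta) + 2\cos\theta\,\sin(t\theta)\sin((1-t)\theta) = \sin^2\theta,$$
which follows by substituting $\sin((1-t)\theta) = \sin\theta\cos(t\theta) - \cos\theta\sin(t\theta)$ and collecting with $\sin^2+\cos^2 = 1$. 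For (iv) I differentiate to obtain
$$\gg'(t) = \frac{\theta}{\sin\theta}\bigl(\cos(t\theta)\,z_1 - \cos((1-t)\theta)\,z_0\bigr),$$
expand $\langle \gg'(t),\gg'(t)\rangle_L$ by the same method using $\cos((1-t)\theta) = \cos\theta\cos(t\theta) + \sin\theta\sin(t\theta)$, and watch the $t$-dependence cancel to leave the constant $\theta^2$. The $\theta=0$ case (parallel hyperplanes) is handled by the removable-singularity extension of $\gl$ noted after \eqref{Eqn3.1} and reduces to the straight-line interpolation $\gg(t) = (1-t)z_0 + t z_1$.

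For the final assertion that $\cT^n$ is a geodesic submanifold, I will observe that $\cT^n = \gL^{n+1}\cap V$, where $V = \{(\bv,c,c) : \bv\in\R^n,\, c\in\R\}$ is a linear subspace of $\R^{n+2,1}$. Since $z_0,z_1 \in V$, the $2$-plane spanned by them lies in $V$, so the formula-defined $\gg(t)$ automatically lies in $\cT^n$ for all $t$ where it is valid. The totally geodesic property of $\cT^n$ more generally follows from the principle recorded at the end of Section~\ref{Sec:Rie.Lor}, that the intersection of $\gL^{n+1}$ with any linear subspace of $\R^{n+2,1}$ is totally geodesic, which also covers the limiting configurations $\bn_1 = \pm\bn_0$ where the explicit formula degenerates. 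The only real obstacle is bookkeeping in the two trigonometric expansions; no conceptual subtlety arises beyond that, and uniqueness of the connecting geodesic (when $z_0,z_1$ lie in a convex neighborhood in the sense of Theorem~\ref{Thm2.1}(5)) then pins the formula down as \emph{the} geodesic.
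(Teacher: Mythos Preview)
Your argument is correct and is organized somewhat differently from the paper's. The paper first introduces an auxiliary unit vector $\bw$ orthogonal to $\bn_0$ with $\bn_1 = \cos\theta\,\bn_0 + \sin\theta\,\bw$, writes the geodesic in the ``rotational'' form \eqref{Eqn3.12}, verifies $\|\gg(t)\|_L = 1$ immediately from the orthonormality of $\bn_0,\bw$, and checks the geodesic condition via $\gg''(t) = -\theta^2\gg(t)$ (normal to $\gL^{n+1}$); it then has to translate \eqref{Eqn3.12} back into the stated form \eqref{Eqn3.4} using the sine subtraction formula. You instead work directly with \eqref{Eqn3.4}, exploiting the key observation that $\langle z_0,z_1\rangle_L = \bn_0\cdot\bn_1 = \cos\theta$ (the $c$-contributions cancel since $\bge=(1,1)$), and reduce both the unit-norm and constant-speed checks to explicit trigonometric identities, after which the characterization at the end of \S\ref{Sec:Rie.Lor} finishes the job. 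Your route is more self-contained and avoids the change of basis; the paper's route has the side benefit that the intermediate form \eqref{Eqn3.12} is reused later in the proof of Proposition~\ref{Prop6.1} when constructing Lorentzian parallel frames.
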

\par
We can expand the expression for $\gg(t)$ and obtain the family of 
hyperplanes $\Pi_t$ in $\R^n$.   Expanding (\ref{Eqn3.4}) we obtain
\begin{align}
\label{Eqn3.5} 
 n_t \quad &= \quad  \gl(t, \theta)\, \bn_1 \, + \,  \gl(1-t, \theta)\, \bn_0  
\quad and \notag  \\
 c_t  \quad &= \quad  \gl(t, \theta)\, c_1 \, + \,  \gl(1-t, \theta)\, c_0
\end{align}
Then the family $\Pi_t$ is given by 
\begin{equation}
\label{Eqn3.6}
 \Pi_t \quad = \quad \{ \bx = (x_1, \dots , x_n) \in \R^n : \bx \cdot \bn_t 
= c_t\}  
\end{equation}
\par
We can also  compute the initial velocity for the geodesic in 
(\ref{Eqn3.4}).  
\begin{Corollary}
\label{Cor3.7}
The initial velocity of the geodesic (\ref{Eqn3.4}) with $\theta \neq 0$ is 
given by 
\begin{equation}
\label{Eqn3.7}
 \gg^{\prime}(0) \quad = \quad  \frac{\theta}{\sin \theta}\cdot 
(\proj_{\bn_0} (\bn_1), (c_1 - \cos \theta\, c_0) \bge )  
\end{equation} 
where $\proj_{\bn_0}$ denotes projection along $\bn_0$ onto the line  
spanned by $\bw$.  If $\theta = 0$, then $\bn_0 = \bn_1$ and the velocity 
is 
$(0, (c_1 - c_0)\bge)$ (with Lorentzian speed $0$). 
\end{Corollary}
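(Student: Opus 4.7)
The plan is to prove this Corollary by direct differentiation of the explicit formula for $\gg(t)$ given in Proposition \ref{prop4.1}, using the derivative formula for $\gl$ already computed in (\ref{Eqn3.2}). Since everything is linear in $z_0$ and $z_1$, no real subtlety is involved; the only task is to organize the resulting expression so that the projection onto $\bn_0^\perp$ becomes visible.

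First, I differentiate
\[
\gg(t) \, = \, \gl(t,\theta)\, z_1 \, + \, \gl(1-t,\theta)\, z_0
\]
with respect to $t$, applying the chain rule to the second summand, to get $\gg'(t) = \gl_x(t,\theta)\, z_1 - \gl_x(1-t,\theta)\, z_0$, where $\gl_x$ denotes the derivative in the first argument. Evaluating at $t=0$ and using (\ref{Eqn3.2}) for $\theta \neq 0$ yields $\gl_x(0,\theta) = \theta/\sin\theta$ and $\gl_x(1,\theta) = \cos\theta\cdot\theta/\sin\theta$, so
\[
\gg'(0) \,=\, \frac{\theta}{\sin\theta}\bigl( z_1 - \cos\theta\cdot z_0\bigr).
\]

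Next, I substitute $z_i = (\bn_i, c_i\bge)$ and split into components. The first $(n+1)$ coordinates give $\bn_1 - \cos\theta\cdot \bn_0$, and since $|\bn_0| = 1$ and $\cos\theta = \bn_0\cdot\bn_1$, this is precisely the component of $\bn_1$ orthogonal to $\bn_0$, i.e. the projection of $\bn_1$ off the line spanned by $\bn_0$ (which is the meaning of $\proj_{\bn_0}(\bn_1)$ as used in the statement). The last two coordinates give $(c_1 - \cos\theta\cdot c_0)\bge$. Combining these recovers (\ref{Eqn3.7}).

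Finally, for the case $\theta = 0$, the definition (\ref{Eqn3.1}) gives $\gl(x,0) = x$, so $\gg(t) = t\, z_1 + (1-t)\, z_0$ is simply the straight-line parametrization and $\gg'(0) = z_1 - z_0$. Since $\theta = 0$ forces $\bn_0 = \bn_1$ (as both are unit vectors with $\bn_0\cdot\bn_1 = 1$), the spatial part vanishes and $\gg'(0) = (0,(c_1-c_0)\bge)$. To verify Lorentzian speed zero, one simply checks $\langle (0,(c_1-c_0)\bge),(0,(c_1-c_0)\bge)\rangle_L = (c_1-c_0)^2 - (c_1-c_0)^2 = 0$ since $\bge = (1,1)$. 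There is no genuine obstacle here; the only mild care needed is confirming that $\gg'(0)$ transitions continuously to the $\theta = 0$ expression as $\theta \to 0$, which follows from the analyticity of $\gl$ noted after (\ref{Eqn3.1}).
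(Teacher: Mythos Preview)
Your proof is correct and is essentially the approach implicit in the paper: the Corollary is stated without a separate proof, as it follows by direct differentiation of the geodesic formula. You differentiate the form (\ref{Eqn3.4}) using the derivative formula (\ref{Eqn3.2}), whereas the paper's proof of Proposition~\ref{prop4.1} works with the equivalent parametrization (\ref{Eqn3.12}), from which $\gg'(0) = (\theta\,\bw,\, \tfrac{\theta}{\sin\theta}(c_1-\cos\theta\,c_0)\bge)$ is read off immediately and then rewritten via $\sin\theta\,\bw = \bn_1 - \cos\theta\,\bn_0 = \proj_{\bn_0}(\bn_1)$; the two computations are the same in substance.
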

\par
\begin{SimpRemark}
Note that  
$$\| (\proj_{\bn_0} (\bn_1), (c_1 - \cos \theta \, c_0) \bge ) \|_L \,\, = 
\,\,  \| \proj_{\bn_0} (\bn_1)\|$$
which equals $\sin \theta$.  We conclude that the Lorentzian magnitude of 
$\gg^{\prime}(0)$ is $\theta$.  Since geodesics have constant speed, the 
geodesic will travel a distance $| \theta |$.  Hence, $| \theta |$ is the 
Lorentzian distance between $z_0$ and $z_1$.
\end{SimpRemark}
\begin{proof}[Proof of Proposition \ref{prop4.1}]
Let $P$ be the plane in $\R^{n+1, 1}$ which contains $0$, $z_0$ and $z_1$.  
The geodesic curve between $z_0$ and $z_1$ is obtained as a constant 
Lorentzian speed parametrization of the curve obtained by intersecting 
$P$ 
with $\gL^{n+1}$.  We choose a unit vector $\bw \in \Pi$ such that 
$\bn_1$ is in 
the plane through the origin spanned by  $\bn_0$ and $\bw$.  Let $0 \leq 
\theta < \pi$  be the angle between $\bn_0$ and $\bn_1$ so 
$\cos \theta = \bn_0 \cdot \bn_1$.  Then, 
$\bn_1 - (\bn_1\cdot \bn_0)\, \bn_0$ is the projection of 
$\bn_1$ along $\bn_0$ onto the line spanned by $\bw$ whose direction is 
chosen so that  $\bn_1 - \cos \theta\, \bn_0 = \sin \theta\, \bw$.  \par
Then, a tangent vector to $\gL^{n+1} \cap P$  at the point $z_0$ is given by 
\begin{equation}
\label{Eqn3.11}
  (\bn_1 - \cos \theta\, \bn_0, (c_1 - \cos \theta\, c_0) \bge)  = (\sin 
\theta\, \bw, (c_1 - \cos \theta\, c_0) \bge) 
\end{equation}
Then, we seek a Lorentzian geodesic $\gg(t)$ in the plane $P$ beginning at 
$(\bn_0, c_0 \bge)$ with initial velocity in the direction $(\sin \theta\, 
\bw, 
(c_1 - \cos \theta\, c_0) \bge)$.  Consider the curve 
\begin{equation}
\label{Eqn3.12}
 \gg(t) \,\, = \,\, (\cos(t \theta) \bn_0 \, + \, \sin (t \theta) \bw, (\cos(t 
\theta) c_0 \, + \, \frac{\sin (t \theta)}{\sin (\theta)}(c_1 - \cos \theta\, 
c_0)) \bge) 
\end{equation}
First, note that $\gg(0) = z_0$, and $\gg(1) = z_1$.  Also, this curve lies
 in the plane spanned by $z_0$ and (\ref{Eqn3.11}).  Also, 
$$ \| \gg(t)\|_L \,\, = \,\, \| \cos(t \theta) \bn_0 \, + \, \sin (t \theta) 
\bw\| \,\, = \,\, 1  $$
as $\bn_0$ and $\bw$ are orthogonal unit vectors.  Hence, $\gg(t)$ is a 
curve parametrizing $\gL^{n+1} \cap P$.  It remains to show that 
$\gg^{\prime \prime}$ is Lorentzian orthogonal to $\gL^{n+1}$ to establish 
that it is a Lorentzian geodesic from $z_0$ to $z_1$.
A computation shows 
$$ \gg^{\prime\prime}(t) \,\, = \,\, - \theta^2 ( \cos(t \theta) \bn_0 \, + 
\, \sin (t \theta) \bw, \frac{\sin (t \theta)}{\sin (\theta)} (c_1 - \cos 
\theta\, c_0) \bge) $$
which is $ - \theta^2\gg(t)$, and hence Lorentzian orthogonal to 
$\gL^{n+1}$.  
\par
Because of the fraction $\frac{\sin (t \theta)}{\sin (\theta)}$, we have to 
note that when $\theta = 0$, then $\bn_0 = \bn_1$ and $\gg(t)$ takes the 
simplified form
$$  \gg(t) \,\, = \,\, (\bn_0, c_0 \, + \, t (c_1 - c_0)) \bge) $$
which is still a Lorentzian geodesic between $z_0$ to $z_1$.  \par
Lastly, we must show that this agrees with (\ref{Eqn3.4}).  First, consider 
the case where $\theta \neq 0$.
$$\bw = \frac{1}{\sin \theta}\, (\bn_1 - \cos \theta\, \bn_0)$$
Substituting this into the first term of the RHS of (\ref{Eqn3.12}), we 
obtain
$$  \frac{1}{\sin \theta}(\sin \theta\, \cos(t \theta) - \cos \theta\, \sin 
(t 
\theta))\, \bn_0 \, + \, \frac{sin (t \theta)}{\sin \theta}\, \bn_1 $$
which by the formula for the sine of the difference of two angles  equals 
$$  \frac{\sin((1-t)\theta)}{\sin \theta}  \bn_0 \, +  \frac{sin (t 
\theta)}{\sin \theta} \bn_1 $$
Analogously, we can compute the second term in the RHS of 
(\ref{Eqn3.12}),
to be
$$   \frac{\sin((1-t)\theta)}{\sin \theta}  c_0 \, +  \frac{sin (t 
\theta)}{\sin \theta} c_1   $$
This gives (\ref{Eqn3.4}) when $\theta \neq 0$.  When $\theta = 0$, $\bn_0 
= \bn_1$ and the derivation of (\ref{Eqn3.4}) from (\ref{Eqn3.12}) for 
$\theta = 0$ is easier.
\end{proof}
\par
\begin{Remark}
\label{Rem4.3}
We have alread seen that the geodesic flow between the planes 
$\bn_0\cdot x = c_0$ and $\bn_1\cdot x = c_1$ induced from the geodesic 
flow in $\R P^{n \vee}$ corresponds to the geodesic flow between $(\bn_0, 
c_0)$ and $(\bn_1, c_1)$, which is given by the unit speed curve in the 
intersection of the plane $P$, containing these points and the origin, with 
the unit sphere $S^n$.  If we replaced (\ref{Eqn3.4}) by linear interpolation
\begin{equation}
\label{Eqn3.13}
  \gg(t) \quad = \quad t\, (\bn_1, c_1) \, + \,  (1-t)\, (\bn_0, c_0)   
\qquad \mbox{for }   0 \leq t \leq 1
\end{equation}
then the curve lies in the plane $P$ and its projection onto the unit sphere 
does parametrize the geodesic, but it is not unit speed, and as we 
remarked earlier it is not invariant under translation and hence not under 
rigid transformations. 
\end{Remark}
\section{\bf Invariance of Lorentzian Geodesic Flow and Special Cases}
\label{Sec:spec.cas} 
\par
We investigate the invariance properties of Lorentzian geodesic flows and 
the properties of these flows in special cases.
\subsection*{Invariance of Lorentzian Geodesic Flow} 
\par
We first claim the Geodesic flow given in Proposition \ref{prop4.1} is 
invariant under the extended Poincare group generated by rigid 
transformations and scalar multiplications.  By this we mean 
the following.  
If $\gg(t) = (\bn_t, c_t)$ is the Lorentzian geodesic flow between 
hyperplanes $P_0$ and $P_1$ defined by $\bn_0\cdot x = c_0$, 
respectively $\bn_1\cdot x = c_1$, then $\tilde \psi (\gg(t))$ is the 
Lorentzian geodesic flow between 
hyperplanes $\psi(P_0)$ and $\psi(P_1)$.
\begin{Proposition}
\label{prop5.1}
The Lorentzian geodesic flow is invariant under the extended Poincare 
group.
\end{Proposition}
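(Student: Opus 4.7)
The plan is to reduce the claim to a verification on a set of generators of the extended Poincare group and then substitute the transformed data directly into formula \eqref{Eqn3.4}. The generators are orthogonal transformations $A\in O(n)$, translations $\tau_\bv(\bx)=\bx+\bv$, and homotheties $h_r(\bx)=r\bx$ ($r\neq 0$); it suffices to handle each of these three cases separately.

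First I would record how each generator acts on the coordinates of $\cT^n$. A short computation starting from the defining equation $\bn\cdot\bx=c$ of a hyperplane $\Pi$ and inspecting the defining equation of $\psi(\Pi)$ yields
\begin{equation*}
\widetilde{A}(\bn,c\bge)=(A\bn,c\bge),\quad \widetilde{\tau_\bv}(\bn,c\bge)=(\bn,(c+\bn\cdot\bv)\bge),\quad \widetilde{h_r}(\bn,c\bge)=(\bn,rc\bge).
\end{equation*}
In all three cases the auxiliary angle $\theta$ defined by $\cos\theta=\bn_0\cdot\bn_1$ is unchanged: translations and homotheties leave the normals fixed, and $A\in O(n)$ preserves the Euclidean inner product. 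Hence the scalar coefficients $\gl(t,\theta)$ and $\gl(1-t,\theta)$ in \eqref{Eqn3.4} are unchanged as well.

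Next I would substitute the transformed pair $(z_0',z_1')$ into \eqref{Eqn3.4} and compare with the $\widetilde\psi$-image of $\gg(t)=\gl(t,\theta)z_1+\gl(1-t,\theta)z_0$. For $A$ and $h_r$ this is immediate because the formula is linear in $z_0,z_1$ and the corresponding action is linear in the relevant factor of $\cT^n$. The translation case uses the linearity in a slightly more delicate way: distributing and collecting terms gives
\begin{equation*}
\gg'(t)=\bigl(\bn_t,\,c_t\bge\bigr)+\bigl(\mathbf{0},\,\bigl(\gl(t,\theta)\bn_1+\gl(1-t,\theta)\bn_0\bigr)\cdot\bv\,\bge\bigr)=\bigl(\bn_t,\,(c_t+\bn_t\cdot\bv)\bge\bigr),
\end{equation*}
where the last equality invokes \eqref{Eqn3.5} to identify the coefficient of $\bv$ as the normal $\bn_t$ itself. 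The resulting hyperplane $\bn_t\cdot\bx=c_t+\bn_t\cdot\bv$ is precisely $\tau_\bv(\Pi_t)$, which is what invariance demands.

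The main subtlety is the translation case, since $\tau_\bv$ does not lift to a Lorentzian isometry of $\gL^{n+1}$ in any natural way, and one therefore cannot argue merely by \emph{``isometries preserve geodesics''}. What rescues the argument is the explicit form of the geodesic in Proposition \ref{prop4.1} together with the fortunate fact that the coefficient of $\bv$ produced above is forced to equal $\bn_t$, the very same normal appearing as the $\bn$-component of $\gg(t)$. This matching is what makes the Lorentzian geodesic flow track translations of hyperplanes correctly rather than some other affine motion, and it is precisely the reason the Lorentzian construction on $\cT^n$ behaves better than the Riemannian construction on $\R P^{n\vee}$ discussed in Remark \ref{Rem4.3}.
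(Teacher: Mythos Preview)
Your proof is correct and follows essentially the same strategy as the paper's: compute the induced action on $\cT^n$, observe that $\theta$ is unchanged, and substitute directly into formula \eqref{Eqn3.4}. The only difference is organizational---the paper treats a general element $\psi(\bx)=bA(\bx)+\bp$ in a single computation rather than decomposing into generators, but the algebra is the same.

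One small correction to your closing commentary: the paper's Remark~\ref{Rem5.5} points out that the induced action of the extended Poincar\'e group (translations included) on the subspace $\R^n\times\R\bge\subset\R^{n+2,1}$ \emph{does} preserve the Lorentzian inner product and maps $\cT^n$ to itself, so one can argue conceptually via ``isometries send geodesics to geodesics'' after all. (The point is that $\bge$ is Lorentzian-null, so shifting the $c$-coordinate by $\bn\cdot\bv$ costs nothing.) Your explicit verification is still the most transparent route, but the picture you describe as unavailable is in fact there.
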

\begin{proof}
Suppose $z_i = (\bn_i, c_i) \in \cT^n$, i = 1, 2, and let $\Pi_i$ be the 
hyperplane determined by $z_i$.  Let $\psi$ be an alement of the extended 
Poincare group.  It is a composition of scalar multiplication by $b$ 
followed by a rigid transformation so $\psi(\bx) = b\, A(\bx) + \bp$, with 
$A$ an orthogonal transformation.  Then, 
$\Pi_i^{\prime} = \psi (\Pi_i)$ is defined by 
\begin{equation}
\label{Eqn5.2a}
\tilde \psi (z_i) = \tilde \psi (\bn_i, c_i) = (A(\bn_i), bc_i + \bn_i\cdot 
\bp).
\end{equation}
\par
If $\cos(\theta) = \bn_0\cdot \bn_1$, then by (\ref{Eqn3.4}) the 
Lorentzian geodesic flow is given by $\gamma(t)$ defined by
\begin{equation}
\label{Eqn5.3}
 (\bn_t, c_t) \,\, = \,\, (\gl(t, \theta)\bn_1 \, + \,  \gl(1-t, \theta)\, 
\bn_0,\,\, \gl(t, \theta)\, c_1 \, + \,  \gl(1-t, \theta)\, c_0 ) 
\end{equation}
defining the family of hyperplanes $\Pi_t$.  Then, by \eqref{Eqn5.2a} 
$\Pi_t^{\prime} = \psi(\Pi_t)$ is defined by $\bn^{\prime}_t\cdot x = 
c^{\prime}_t$, where $\tilde \psi(\gamma(t))$ is defined by
\begin{align}
\label{Eqn5.4}
  (\bn^{\prime}_t, c^{\prime}_t) \,\, &= \,\, \left( A(\gl(t, \theta)\bn_1 \, 
+ \,  \gl(1-t, \theta)\, 
\bn_0), \, b(\gl(t, \theta))c_1 + \gl(1-t, \theta))c_0) \, + \right. \\
& \qquad \qquad  \, \left. A(\gl(t, \theta)\bn_1 \, + \,  \gl(1-t, 
\theta)\cdot \bp)\right) \notag  \\
&= \,\,\gl(t, \theta)\left(A(\bn_1), \, b c_1 + A(\bn_1)\cdot \bp\right) \, 
+ \,
\gl(1-t, \theta)\left( A(\bn_0),  \, b c_0 + A(\bn_0)\cdot \bp\right) 
\notag \\ 
&= \,\, \gl(1-t, \theta)\tilde \psi(z_1) + \gl(1-t, \theta)\tilde \psi(z_0) 
\notag  
\end{align}
which is the geodesic flow between $\Pi_0^{\prime}$ defined by $\tilde 
\psi(z_0)$ and $\Pi_1^{\prime}$ defined by $\tilde \psi(z_1)$.  
\par
\end{proof}
\begin{Remark}
\label{Rem5.5}
An alternate way to understand Proposition \ref{prop4.1} is to observe 
that the extended Poincare group acts on $\R^n \times \R \ge$ sending 
$(\bv, c\ge) \mapsto (A(\bv), (b c + \bv\cdot \bw)\ge)$.  This action 
preserves the Lorentzian inner product on this subspace and preserves 
$\cT^n$.  Hence, it maps geodesics in $\cT^n$ to geodesics in $\cT^n$.  
\end{Remark}
\subsection*{Special Cases of Lorentzian Geodesic Flow}
\par
We next determine the form of the Lorentzian geodesic flow in several 
special cases. \par
\begin{Example}[Hypersurfaces Obtained by a Translation and Homothety] 
\par
Suppose that we obtain $\Pi_1$ from $\Pi_0$ by translation by a vector 
$\bp$ and multiplication by a scalar $b$.  The correspondence associates 
to $\bx \in \Pi_0$, $bx^{\prime} = \bx + \bp \in 
\Pi_1$.  Then, the geodesic flow is given by the following.
\begin{Corollary}
\label{Cor5.6}
Suppose $\Pi_0$ is the hyperplane defined by $\bn_0\cdot x = c_0$, with 
$\bn_0$ a unit vector, and $\Pi_1$ is obtained from $\Pi_0$ by 
multiplication by the scalar $b \neq 0$ and then translation by $\bp$.  Then 
the Lorentzian geodesic 
flow $\Pi_t$ is given by the family of parallel hyperplanes defined by 
$\bn_0\cdot x = c_t$ where $c_t = (1+ (b - 1)t)c_0 + t \bn_0\cdot \bp$.  
\end{Corollary}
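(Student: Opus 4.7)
The plan is to reduce the statement to the degenerate $\theta = 0$ case of Proposition \ref{prop4.1}. The essential observation is that translation and scaling do not rotate the normal direction, so the angle between $\bn_0$ and $\bn_1$ is zero, and the geodesic formula \eqref{Eqn3.4} collapses to linear interpolation in the $c$-coordinate.

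First I would compute the Lorentzian image $z_1 \in \cT^n$ of $\Pi_1$. Since $\Pi_1 = \psi(\Pi_0)$ with $\psi(\bx) = b\,\bx + \bp$, the orthogonal part is $A = I$, so \eqref{Eqn5.2a} gives $z_1 = (\bn_0,\, (bc_0 + \bn_0\cdot \bp)\bge)$. This is also easy to check directly: if $\bx \in \Pi_0$ and $\by = b\,\bx + \bp$, then
$$\bn_0 \cdot \by \,=\, b\,(\bn_0 \cdot \bx) + \bn_0\cdot \bp \,=\, bc_0 + \bn_0 \cdot \bp,$$
so $\Pi_1$ has unit normal $\bn_1 = \bn_0$ and offset $c_1 = bc_0 + \bn_0 \cdot \bp$.

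Because $\bn_1 = \bn_0$, we have $\cos\theta = \bn_0\cdot\bn_1 = 1$, hence $\theta = 0$. The $\theta = 0$ branch of the geodesic identified in the proof of Proposition \ref{prop4.1} is
$$\gamma(t) \,=\, \bigl(\bn_0,\, (c_0 + t(c_1 - c_0))\,\bge\bigr),$$
which shows $\bn_t = \bn_0$ for all $t$ (so the $\Pi_t$ are all parallel to $\Pi_0$, with common normal $\bn_0$) and $c_t = c_0 + t(c_1 - c_0)$. Substituting the value of $c_1$ from the first step yields $c_t = (1 + (b-1)t)c_0 + t\,\bn_0\cdot\bp$, as claimed.

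There is no real obstacle here: the corollary is a direct specialization of \eqref{Eqn5.2a} together with the degenerate $\theta = 0$ form of \eqref{Eqn3.4}. The only point worth flagging is that one should record why the Lorentzian geodesic really reduces to the straight-line-in-$c$ formula when $\theta = 0$, which is already handled explicitly in the proof of Proposition \ref{prop4.1}.
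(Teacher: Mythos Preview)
Your proof is correct and follows essentially the same approach as the paper: both identify $\bn_1=\bn_0$ and $c_1=bc_0+\bn_0\cdot\bp$, observe that $\theta=0$ so $\gl(t,0)=t$, and reduce \eqref{Eqn3.4} to linear interpolation in $c$. The only cosmetic difference is that you invoke \eqref{Eqn5.2a} with $A=I$ to read off $c_1$, whereas the paper recomputes it directly.
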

\par
\begin{proof}
\par
If  $\Pi_0$ is defined by $\bn_0\cdot x = c_0$, with $\bn_0$ a unit 
vector, then, $\Pi_1$ is defined by $\bn_1\cdot x = c_1$ where $\bn_1 = 
\bn_0$ and $c_1 = b c_0 + \bn_0 \cdot \bp$.  Thus, $\Pi_1$ is parallel to 
$\Pi_0$.  \par 
Thus, as $\bn_1 = \bn_0$, $\theta = 0$ and $\gl(t, 0) = t$, so the geodesic 
flow $\Pi_t$ is given by 
\begin{align}
\label{Eqn5.8}
  t (\bn_0, c_1 \bge)  + (1-t) (\bn_0, c_0 \bge)  \,\, &= \,\,  (\bn_0, ((1-
t)c_0 + t (b c_0 + \bn_0 \cdot \bp)) \bge ) \notag\\  
\,\, &= \,\, (\bn_0, ((t b + 1-t)c_0 + t (\bn_0 \cdot \bp)) \bge) 
\end{align}
so that $\Pi_t$ is defined by $\bn_0\cdot \bx = c_t$ where $c_t = (1 + (b 
- 1)t)c_0 + t (\bn_0 \cdot \bp)$.  

This defines a family of hyperplanes parallel to $\Pi_0$ where 
derivative of the translation map is the identity; hence, under translation 
$\bn_0$ is mapped to itself translated to $\bx^{\prime} = \bx + \bp$.  
Thus, under the correspondence, $\bn_1 = \bn_0$.  Also, If $\bn_0 \cdot 
\bx = c_0$ is the equation of the tangent plane for $M_0$ at a point $\bx$, 
then the tangent plane of $M_1$ at the point $\bx^{\prime}$ is 
$$ \bn_1 \cdot  \bx^{\prime} \, = \, \bn_0 \cdot  (\bx + \bp) \, = \, c_0 + 
\bn_0 \cdot \bp  $$
Hence, $c_1 = c_0 + \bn_0 \cdot \bp$.  \par
As $\bn_0 = \bn_1$, $\theta = 0$.  Thus the geodesic flow on $\cT^n$ is 
given 
by 
$$  t (\bn_0, c_1 \bge)  + (1-t) (\bn_0, c_0 \bge)  \,\, = \,\,  (\bn_0, c_0 
\bge) + (0, (t \bn_0 \cdot \bp) \bge )  \,\, = \,\, (\bn_0, (\bn_0 \cdot (\bx 
+  t \bp)) \bge)  $$
Thus, at time $t$ the tangent space is translated by $t \bp$.  Thus, the 
envelope of these translated hyperplanes is the translation of $M_0$ by $t 
\bp$.  
\end{proof}
\begin{Remark}
If a hypersurface $M_1$ is obtained from the hypersurface $M_0$ by a 
translation combined with a homothety $\bx^{\prime} = \psi(\bx) = b \bx + 
\bp$, then for each $x \in M_0$ with image $x^{\prime} \in M_1$ the 
Lorentzian geodesic flow will send the tangent plane $T_xM_0$ to the 
tangent plane $T_{x^{\prime}}M_1$  by the family of parallel hyperplanes 
given by Corollary \ref{Cor5.6}.  Thus, for each $0 \leq t \leq 1$, the 
hyperplane under the geodesic flow will be the tangent plane 
$T_{\bx_t}M_t$, where for $\psi_t(\bx) = (1 + (b - 1) t) \bx + t \bp$, 
$M_t = \psi_t(M_0)$ and $\bx_t = \psi_t(\bx) \in M_t$.  Thus, the Lorentzian 
geodesic flow will send $M_0$ to the family of hypersurfaces 
$M_t = \psi_t(M_0)$. 
\end{Remark}
\end{Example}

\par
\begin{Example}[Hyperplanes Obtained by a Pseudo-Rotation]
\par
\begin{figure}[ht]
\centerline{\includegraphics[width=10cm]{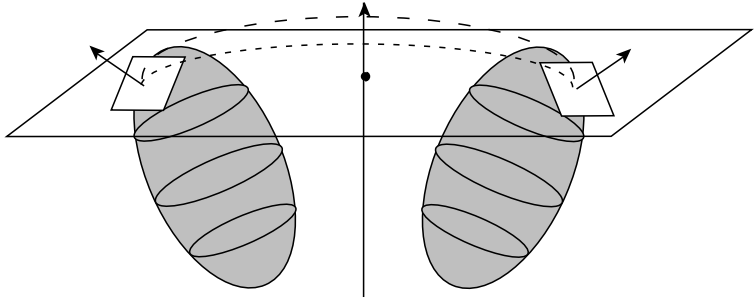}}
\caption{\label{fig.5}  Lorentzian Geodesic Flow between a hyperplane 
$\Pi_0$ and a rotated copy $\Pi_1$, where the rotation is about a 
subspace not containing $W = \Pi_0 \cap \Pi_1$, is given by a 
\lq\lq pseudo--rotation\rq\rq.  The path of the 
rotation is indicated by the dotted curve, while that for the pseudo 
rotation is given by the  curve, which lifts out of the plane of rotation 
before returning to it (although it does remain in a plane parallel to 
$W^{\perp}$).}
\end{figure} 
\par
Second, suppose that $\Pi_0$ and $\Pi_1$ are nonparallel affine hyperplanes.  
Then, $W = \Pi_0 \cap \Pi_1$ is a codimension $2$ affine subspace.  The 
unit normal vectors $\bn_0$ and $\bn_1$ lie in the orthogonal plane 
$W^{\perp}$ with $\bn_0 \cdot \bn_1 = \cos(\theta)$ with $-\pi/2 < 
\theta < \pi/2$.  Since the Lorentzian geodesic flow commutes with 
translation, we may translate the planes and assume that $W$ contains 
the origin.  Then, both $c_0$ and $c_1$ equal $0$.  Thus, by Proposition 
\ref{prop4.1}, the Lorentzian geodesic flow from $\Pi_0$ to $\Pi_1$ is 
given by $(\bn_t, c_t\bge)$ for $0 \leq t \leq 1$, where
\begin{equation}
\label{Eqn5.9}
  (\bn_t, c_t\bge)  \,\, = \,\,  (\gl(t, \theta)\, (\bn_1, c_1\bge) \, + \,  \gl(1-t, 
\theta)\, (\bn_0, c_0\bge)   
\end{equation}
Thus, $\bn_t = \gl(t, \theta) \bn_1 + \gl(1-t, \theta) \bn_0$, while $c_t \equiv 
0$.  Hence, the hyperplane $\Pi_t$ is defined by $\bn_t \cdot \bx = 0$ so it 
contains $W$.  However, its intersection with the plane $W^{\perp}$ is the 
line orthogonal to $\bn_t$, which by the above expression for $\bn_t$, does 
not give a standard constant speed rotation in the plane.  We refer to this 
as a {\em pseudo-rotation}. 
\par
Instead consider a rotation $A$ of hyperplanes $\Pi_0$ to $\Pi_1$ about 
an axis not containing $W$.  We consider the form of the pseudo-rotation.  
As an example, consider the case of a rotation $A$ about the origin in a 
plane (which pointwise fixes an orthogonal subspace.  Choosing 
coordinates, we may assume that the rotation $A$ is in the 
$(x_1, x_2)$--plane and rotates by an angle $\gw$.  We suppose $\Pi_0$, 
defined by $\bn_0 \cdot \bx = c_0$.  As $A (\bn_0) \cdot A(\bx) = \bn_0 
\cdot \bx = c_0$, if we let $\bx^{\prime} = A(\bx)$, then the equation of 
the hyperplane $\Pi_1$  is defined by $A (\bn_0) \cdot \bx^{\prime} = 
c_0$.  
Hence, $\bn_1 = A (\bn_0)$ and $c_1 = c_0$.  \par
To express the geodesic flow, we write $\bn_0 = \bv + \bp$ where $\bv$ 
is in the rotation plane and $\bp$ is fixed by $A$.  Hence, $\bn_1 = A(\bv) 
+ \bp$.  Thus, the angle $\theta$ between $\bn_0$ and $\bn_1$ satisfies 
$$ \cos \theta \, = \, \bn_1 \cdot \bn_0 \, = \, A(\bv)\cdot \bv \, + \, 
\bp\cdot \bp  $$
As $\| \bn_0 \| = 1$, we obtain $\bv\cdot \bv \, + \, \bp\cdot \bp = 1$.  
Also, $A(\bv)\cdot \bv = \| \bv \|^2 \cos \gw$.   Hence, 
\begin{equation}
\label{Eqn5.10}
  \cos \theta \, = \,  1 + \| \bv \|^2 (\cos \gw - 1)  
\end{equation}
\par
We recall that by (\ref{Eqn3.2a})

\begin{equation*}
\gl(t, \theta) + \gl(1-t, \theta)  \,\, = \,\, \mu (1 -2t, \frac{\theta}{2})   
\end{equation*}

Using the expressions for $\bn_0$ and $\bn_1$, we find the geodesic flow 
is 
given by
\begin{align}
\label{Eqn5.11}
  &= \,\, \gl(t, \theta)\, (A(\bn_0), c_0 \bge)  +  \gl(1-t, \theta)\, (\bn_0, 
c_0 \bge)  \notag \\ 
 &= \,\,  
((\gl(t, \theta) A(\bv) + \gl(1-t, \theta)\, \bv) \, + \,  \mu (1 -2t, 
\frac{\theta}{2})\, \bp, \,\, \mu (1 -2t, \frac{\theta}{2}) c_0 \bge)  
\end{align}
\par
We note that $\mu (1 -2t, \frac{\theta}{2})$ is a function of $t$ on 
$[0, 1]$ which has value 
$ = 1$ at the end points, and has a maximum $ = \sec (\frac{1}{2}\theta)$ 
at $t = \frac{1}{2}$.  Thus, the geodesic flow $(\bn_t, c_t\bge)$ has the 
contribution in the rotation plane given by 
$\gl(t, \theta) A(\bv) + \gl(1-t, \theta) \bv$ which is not a true rotation 
from $\bv$ to $A(\bv)$.  Also, the other contribution to $\bn_t$ is from 
$\mu (1 -2t, \frac{\theta}{2}) \bp$ which increases and then returns to 
size $\bp$ (see Fig. \ref{fig.5}).  In addition, the distance from the origin 
will vary by $\mu (1 -2t, \frac{\theta}{2}) c_0$.  This is the form of the  
pseudo rotation from $\Pi_0$ to $\Pi_1$.  
This yields the following corollary.
\begin{Corollary}
\label{Cor5.12}
If $\Pi_1$ is obtained from $\Pi_0$ by rotation in a plane (with fixed 
orthogonal complement), then the Lorentzian geodesic flow is the family 
of hypersurfaces obtained by applying to $\Pi_0$ the family of pseudo 
rotations given by (\ref{Eqn5.11}).  
\end{Corollary}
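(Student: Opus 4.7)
The plan is to derive formula (\ref{Eqn5.11}) directly from Proposition \ref{prop4.1} applied to the pair of points in $\cT^n$ corresponding to $\Pi_0$ and $\Pi_1 = A(\Pi_0)$, and then to recognize the resulting family of hyperplanes as the pseudo-rotation from $\Pi_0$ to $\Pi_1$.

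First, I encode the rotation algebraically. If $\Pi_0$ is defined by $\bn_0 \cdot \bx = c_0$ with $\|\bn_0\| = 1$, then since $A$ is orthogonal, $A(\bn_0) \cdot A(\bx) = \bn_0 \cdot \bx = c_0$, so $\Pi_1 = A(\Pi_0)$ is defined by $A(\bn_0)\cdot \bx = c_0$. Hence in $\cT^n$ the two endpoints of the desired geodesic are $z_0 = (\bn_0, c_0 \bge)$ and $z_1 = (A(\bn_0), c_0 \bge)$. Next, I decompose $\bn_0 = \bv + \bp$, where $\bv$ lies in the rotation plane of $A$ and $\bp$ in the complementary subspace fixed pointwise by $A$, so that $\bn_1 = A(\bv) + \bp$. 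Combining $\|\bn_0\|^2 = \|\bv\|^2 + \|\bp\|^2 = 1$ with $A(\bv)\cdot \bv = \|\bv\|^2 \cos\gw$ gives $\cos\theta = \bn_0\cdot \bn_1 = 1 + \|\bv\|^2(\cos\gw - 1)$, which recovers (\ref{Eqn5.10}).

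Now I invoke Proposition \ref{prop4.1}: the Lorentzian geodesic is $\gg(t) = \gl(t,\theta)\, z_1 + \gl(1-t,\theta)\, z_0$. Expanding the first coordinate with $\bn_1 = A(\bv) + \bp$ and $\bn_0 = \bv + \bp$ produces
\[ \bn_t \,\,=\,\, \gl(t,\theta)\, A(\bv) \,+\, \gl(1-t,\theta)\, \bv \,+\, \bigl(\gl(t,\theta) + \gl(1-t,\theta)\bigr)\, \bp, \]
and the second coordinate, using $c_0 = c_1$, collapses to $c_t = \bigl(\gl(t,\theta) + \gl(1-t,\theta)\bigr)\, c_0$. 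Applying the identity (\ref{Eqn3.2a}) to replace $\gl(t,\theta) + \gl(1-t,\theta)$ by $\mu(1-2t, \theta/2)$ in both expressions gives precisely the formula (\ref{Eqn5.11}).

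The remainder of the corollary is a direct identification: the family of hyperplanes $\Pi_t$ defined by $\bn_t \cdot \bx = c_t$ is, by the definition introduced just above the statement, the pseudo-rotation from $\Pi_0$ to $\Pi_1$. The only real technical step is the bookkeeping of the decomposition of $\bn_0$ into the rotating part $\bv$ and the fixed part $\bp$ together with the careful use of the identity (\ref{Eqn3.2a}); the underlying geometric content is entirely furnished by Proposition \ref{prop4.1} and Proposition \ref{prop5.1} (which guarantees we may translate so that the fixed subspace passes through the origin, ensuring $c_0 = c_1$ is consistent with the hypotheses).
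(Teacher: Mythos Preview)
Your argument is correct and follows essentially the same route as the paper: the corollary is stated as a summary of the preceding derivation in the example, which computes $\bn_1 = A(\bn_0)$, $c_1 = c_0$, decomposes $\bn_0 = \bv + \bp$, applies Proposition~\ref{prop4.1}, and then uses identity~(\ref{Eqn3.2a}) to arrive at (\ref{Eqn5.11}). Your final appeal to Proposition~\ref{prop5.1} is unnecessary here, since the paper's setup already places the rotation about the origin, so $c_1 = c_0$ follows directly from orthogonality of $A$; but this does no harm.
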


\end{Example}
\par
\section{\bf Families of Lorentzian Parallel Frames on Lorentzian Geodesic 
Flows}
\label{Sec:par.frms}
\par
A Lorentzian geodesic flow from hyperplanes $\Pi_0$ to $\Pi_1$ may be 
viewed as a minimum twisting family of hyperplanes $\Pi_t$ joining 
$\Pi_0$ to $\Pi_1$.  If in addition, we are given orthonormal frames 
$\{ e_{i\, 0}\}$ for $\Pi_0$ and $\{ e_{i\, 1}\}$ for $\Pi_1$, we ask what 
form a minimum twisting family of smoothly varying frames $\{ e_{i\, 
t}\}$ for $\Pi_t$ should take?  We give the form of the family of \lq\lq 
Lorentzian parallel\rq\rq\, orthonormal frames in $\Pi_t$ beginning with 
$\{ e_{i\, 0}\}$, and then use this family to construct a family of frames 
from $\{ e_{i\, 0}\}$ to $\{ e_{i\, 1}\}$ which can be made to satisfy 
various criteria for minimal Lorentzian twisting.  \par

\vspace{1ex}
\par
\subsection*{\bf Criterion for Lorentzian Parallel Vector Fields}  \hfill

\par
Given a smooth curve $\gg(t)$, $0 \leq t \leq 1$ in $\R^n$ and a smoothly 
varying  family of affine hyperplanes $\{\Pi_t\}$ satisfying: 
\begin{itemize}
\item[1)] $\gg(t) \in \Pi_t$ for each $t$;
\item[2)] $\gg(t)$ is tranverse $\Pi_t$ for each $t$.
\end{itemize}
\par
We let $\bn_t$ denote the smooth family of unit normals to the hyperplanes 
$\Pi_t$.  Then there is a corresponding curve in $\gL^{n+1}$ defined by 
$\tilde{\gg}(t) = (\bn_t, c_t \bge)$ where $c_t = <\gg(t), \bn_t>$.  Let 
$\be_t$ denote a smooth section of $\{\Pi_t\}$, by which we mean that if 
we view $\be_t$ as a vector from the point $\gg(t)$ lies in the hyperplane 
$\Pi_t$ for each $t$.  There is then a corresponding vector field 
$\tilde{\be}_t$ on $\tilde{\gg}(t)$defined by $\tilde{\be}_t = (\be_t, \gb(t) \bge)$.  
This vector field is tangent to $\gL^{n+1}$ as the vector $N_t = (\bn_t, c_t \bge)$ 
is Lorentzian normal to $\gL^{n+1}$ at $\tilde{\gg}(t)$ so $< N_t, \tilde{\be}_t>_L = <\bn_t, \be_t> = 0$.  \par
We give a criterion for $\tilde{\be}_t$ to be a Lorentzian parallel vector field along 
$\tilde{\gg}(t)$. 
\par
\begin{Lemma}[Criterion for Lorentzian Parallel Vector Fields]
\label{Lem6.1a}
The smooth vector field $\tilde{\be}_t$ is Lorentzian parallel along $\tilde{\gg}(t)$ if :
\begin{itemize}
\item[i)]  $\pd{e_t}{t} = \varphi(t) \bn_t$ for a smooth function $\varphi(t)$
\item[ii)] $\gb(t) = \int \varphi(t) c_t \,dt$ for each $t$.
\end{itemize}
\end{Lemma}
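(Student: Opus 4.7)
The plan is to invoke the standard characterization of parallel transport in a semi-Riemannian submanifold: a tangent vector field $V(t)$ along a curve in $\gL^{n+1} \subset \R^{n+2,1}$ is Lorentzian parallel (for the induced Levi--Civita connection on $\gL^{n+1}$) precisely when its derivative computed in the flat ambient space $\R^{n+2,1}$ has no tangential component, i.e.\ lies entirely along the Lorentzian normal direction to $\gL^{n+1}$ at the base point. Since $\gL^{n+1}$ is the level set $\langle v,v\rangle_L = 1$, its Lorentzian normal at a point $z$ is spanned by $z$ itself (as already noted in the excerpt following Theorem~\ref{Thm2.1}). So the entire proof reduces to showing that $\tilde{\be}_t'(t)$ is a scalar multiple of $\tilde{\gg}(t) = (\bn_t, c_t\bge)$.

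First I would differentiate $\tilde{\be}_t = (\be_t, \gb(t)\bge)$ in $t$ coordinate-by-coordinate, obtaining
\[
\tilde{\be}_t'(t) \,=\, \Bigl(\pd{\be_t}{t},\, \gb'(t)\bge\Bigr).
\]
Condition (i) immediately gives $\pd{\be_t}{t} = \varphi(t)\bn_t$, while condition (ii), read in its differential form, gives $\gb'(t) = \varphi(t) c_t$. Therefore
\[
\tilde{\be}_t'(t) \,=\, \bigl(\varphi(t)\bn_t,\; \varphi(t) c_t \bge\bigr) \,=\, \varphi(t)\,(\bn_t, c_t \bge) \,=\, \varphi(t)\,\tilde{\gg}(t),
\]
which is manifestly along the Lorentzian normal to $\gL^{n+1}$ at $\tilde{\gg}(t)$. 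By the characterization above, $\tilde{\be}_t$ is Lorentzian parallel along $\tilde{\gg}(t)$.

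Since the claim is only the sufficient direction, there is no real technical obstacle; the content is essentially that conditions (i) and (ii) are engineered exactly so that both components of $\tilde{\be}_t'$ acquire the \emph{same} scalar factor $\varphi(t)$, thereby producing proportionality to $\tilde{\gg}(t) = (\bn_t, c_t\bge)$. The one thing I would be careful about is the preliminary check that $\tilde{\be}_t$ is actually tangent to $\gL^{n+1}$ (so that ``Lorentzian parallel'' is meaningful); this is already noted in the paragraph preceding the lemma via the identity $\langle N_t,\tilde{\be}_t\rangle_L = \langle\bn_t,\be_t\rangle = 0$ coming from $\be_t \in \Pi_t$, and need only be referenced.
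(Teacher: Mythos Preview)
Your proposal is correct and follows essentially the same approach as the paper: both compute $\tilde{\be}_t' = (\varphi(t)\bn_t, \varphi(t)c_t\bge) = \varphi(t)\,\tilde{\gg}(t)$ and invoke that the position vector is the Lorentzian normal to $\gL^{n+1}$, so a tangential vector field whose ambient derivative is a multiple of the normal is parallel. The paper's argument is nearly identical line-for-line, writing $N_t$ for $\tilde{\gg}(t)$.
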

\par
\begin{proof}
As $N_t$ is Lorentzian normal to $\gL^{n+1}$ at $\tilde{\gg}(t)$, it is sufficient to show that $\pd{\tilde{e}_t}{t} = \ga(t) N_t$ for some function $\ga(t)$.  Then, by i) and ii)
\begin{align}
 \pd{\tilde{e}_t}{t} \,\, &= \,\, (\pd{e_t}{t}, \gb^{\prime}(t)) \,\, = \,\, (\varphi(t) \bn_t, \gb^{\prime}(t) \bge)  \notag  \\  
\,\, &= \,\, \varphi(t) (\bn_t, c_t \bge) \,\, = \,\, \varphi(t) N_t 
\end{align}
Hence, $\tilde{e}_t$ is Lorentzian parallel.
\end{proof}
\par 
Hence, a smooth section $\be_t$ of $\{\Pi_t\}$ extends to a Lorentzian parallel vector field $\tilde{\be}_t$ provided condition i) is satisfied and using condition ii) to define 
$\gb(t)$. \par
\vspace{2ex}
\begin{Example}
\label{Exam6.2a}
Suppose $\Pi_t$ is the normal hyperplane to $\gg(t)$ at the point $\gg(t)$ for each $t$.  Then the condition that $\be_t$ is a section of $\Pi_t$ is that $<\be_t, \gg^{\prime}(t)> = 0$.  Then, by Lemma \ref{Lem6.1a} the condition that $\be_t$ is moreover a parallel vector field is that there is a smooth function $\varphi(t)$ so that $\pd{\be_t}{t} = \varphi(t) \gg^{\prime}(t)$.  These two conditions are the criteria in \cite{WJZY} and other papers quoted there that for the normal family of affine planes in $\R^3$ the vector field $\be_t$ has \lq\lq minimum rotation\rq\rq.   
\end{Example}
\begin{Remark}
In the case when the family of affine hyperplanes $\{\Pi_t\}$ are not normal, then the vectors $\bn_t$ and $\gg^{\prime}(t)$ are not parallel so the condition in Lemma \ref{Lem6.1a} replaces the role  of $\gg^{\prime}(t)$ in both conditions by $\bn_t$.
\end{Remark}

Then, for each such vector field $\zeta(t)$ and smooth function $\gb(t)$, 
we define a smooth tangent vector field to $\gL^{n+1}$ (in fact $\cT^n$) 
along $\gg(t)$ by $\tilde \zeta(t) = (\zeta(t), \gb(t) \bge)$.  We 
observe that at each point $(\bn_t, c_t \bge)$, $\tilde \zeta(t)$ is 
Lorentzian orthogonal to $(\bn_t, c_t \bge)$ and so is tangent to 
$\gL^{n+1}$.  Moreover, because of the form of $\tilde \zeta(t)$, it is also 
tangent to $\cT^n$.  However, there may be no specific choice of $\gb(t)$ 
possible for $\tilde \zeta(t)$ to be a Lorentzian parallel vector field on 
$\cT^n$.  We say that the smooth section $\zeta(t)$ of $\{\Pi_t\}$ is a 
{\em Lorentzian parallel vector field} if there is a smooth function 
$\gb(t)$ so that $\tilde \zeta(t)$ is a Lorentzian parallel vector field on 
$\cT^n$.  For example, in the special case that the section $\zeta(t) \equiv \bv$ is 
constant we may choose $\gb(t) \equiv 0$, and the vector field $\tilde 
\zeta(t) = (\zeta(t), 0\cdot \bge)$ is constant and so is a Lorentzian 
parallel vector field.  
\par
Thus, given a set of such vector fields $\{\zeta_i(t): i = 1, \dots , k\}$  
which are sections of $\{ \Pi_t\}$ together with smooth functions 
$\gb_i(t)$, then we obtain a set of vector fields $\{\tilde \zeta_i(t): i = 1, 
\dots , k\}$ on $\gg(t)$ tangent to $\cT^n$.  Then, the existence of 
Lorentzian parallel families of frames for 
$\{ \Pi_t\}$is given by the following.
\begin{Proposition}
\label{Prop6.1}
Let $\gg(t) = (\bn_t, c_t \bge)$ be a Lorentzian geodesic defining the 
family of hyperplanes $\{ \Pi_t\}$ in $\R^n$.  If $\{ e_{i\, 0}, 1 \leq i \leq 
n-1\}$ is an orthonormal frame for $\Pi_0$, there is a (smoothly varying) 
family of orthonormal frames $\{ e_{i\, t}, , 1 \leq i \leq n-1\}$ for $\{ 
\Pi_t\}$ such that the vector fields $\{ \tilde e_{i\, t}, 1 \leq i \leq n-
1\}$ form a family of Lorentzian parallel vector fields on $\cT^n$ which 
are Lorentzian orthonormal.  
\end{Proposition}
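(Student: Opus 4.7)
The plan is to apply Lemma~\ref{Lem6.1a} with each $\varphi_i(t)$ chosen so as to keep $e_{i,t}$ a unit section of $\Pi_t$.  Condition (i) of Lemma~\ref{Lem6.1a}, combined with the constraint $<e_{i,t}, \bn_t> \equiv 0$ that $e_{i,t}$ remain tangent to $\Pi_t$, uniquely forces $\varphi_i(t) = -<e_{i,t}, \bn_t^{\prime}>$ upon differentiation in $t$.  Accordingly, I would define $e_{i,t}$ as the solution of the linear ODE
\begin{equation*}
\pd{e_{i,t}}{t} \,\,=\,\, -<e_{i,t}, \bn_t^{\prime}>\, \bn_t, \qquad e_{i,t}|_{t=0} \,=\, e_{i,0}.
\end{equation*}
Smoothness of $\bn_t$ along the Lorentzian geodesic together with linearity of the system yields a unique smooth solution $e_{i,t}$ on all of $[0,1]$.

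Next I would carry out three verifications.  First, that each $e_{i,t}$ remains in $\Pi_t$: differentiating $<e_{i,t}, \bn_t>$ and substituting the ODE gives $\frac{d}{dt}<e_{i,t}, \bn_t> = -<e_{i,t}, \bn_t^{\prime}> + <e_{i,t}, \bn_t^{\prime}> = 0$ (using $<\bn_t, \bn_t> = 1$), so combined with $<e_{i,0}, \bn_0> = 0$ the constraint is preserved.  Second, that $\{e_{i,t}\}$ remains Euclidean orthonormal: since $e_{i,t}, e_{j,t}\in \Pi_t$, one has $\frac{d}{dt}<e_{i,t}, e_{j,t}> = \varphi_i<\bn_t, e_{j,t}> + \varphi_j<e_{i,t}, \bn_t> = 0$.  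Third, setting $\gb_i(t) = \int_0^t \varphi_i(s) c_s\,ds$ satisfies condition (ii) of Lemma~\ref{Lem6.1a}, so the lifted field $\tilde e_{i,t} = (e_{i,t}, \gb_i(t)\bge)$ is a Lorentzian parallel vector field along $\tilde \gg(t)$ and tangent to $\cT^n$.

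Finally, Lorentzian orthonormality of the lifted frame follows for free: since $\bge = (1,1)$ is a null vector with $<\bge, \bge>_L = 0$,
\begin{equation*}
<\tilde e_{i,t}, \tilde e_{j,t}>_L \,\,=\,\, <e_{i,t}, e_{j,t}>\, +\, \gb_i(t)\gb_j(t)<\bge, \bge>_L \,\,=\,\, \delta_{ij},
\end{equation*}
so Lorentzian orthonormality reduces automatically to the Euclidean orthonormality already established.  The only real obstacle, and a modest one, is the verification that the pointwise constraint $e_{i,t}\in \Pi_t$ is actually preserved by the chosen ODE; once that invariance is in hand the rest is routine algebra together with Lemma~\ref{Lem6.1a}.
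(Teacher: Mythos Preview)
Your proof is correct and takes a genuinely different route from the paper's.

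The paper proceeds constructively: it first treats the case $\Pi_0 \parallel \Pi_1$ by taking constant frames, and in the non-parallel case it exploits the explicit geometry of the geodesic.  Setting $W = \Pi_0 \cap \Pi_1$, it chooses a constant orthonormal frame $e_2,\dots,e_{n-1}$ for $W$ (trivially parallel) and then defines $e_{1,t} = \gl(t,\theta)\,e_{1,1} + \gl(1-t,\theta)\,e_{1,0}$ in $W^{\perp}$, checking by hand that this is a unit section of $\Pi_t$ and identifying its lift as $\tilde e_{1,t} = \tfrac{1}{\theta}\gg'(t)$, hence parallel because $\gg$ is a geodesic.  A general initial frame is then reached by a fixed orthogonal change of basis.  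Your approach instead sets up the linear ODE $e_{i,t}' = -\langle e_{i,t},\bn_t'\rangle\,\bn_t$ and invokes Lemma~\ref{Lem6.1a} directly, verifying the needed invariants by differentiation; no case split is required and an arbitrary initial frame is handled at once.  What your argument buys is economy and uniformity; what the paper's argument buys is the closed-form expressions (notably $e_{1,t}$ in terms of $\gl$ and the identification with $\tfrac{1}{\theta}\gg'(t)$), which it relies on in the subsequent examples such as Example~\ref{Exam6.5}.
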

\begin{proof}
\par 
First, if $\Pi_0$ and $\Pi_1$ are parallel then $\Pi_1$ is a translation of 
$\Pi_0$, so by Corollary \ref{Cor5.6} the Lorentzian geodesic flow 
$\Pi_t$ is a family of hyperplanes parallel to $\Pi_0$ so the family of 
frames is the \lq\lq constant\rq\rq\, family obtained by translating $\{ 
e_{i\, 0}\}$ to each hyperplane in the family.  The corresponding family 
$\{ \tilde e_{i\, t}\}$ is also constant, and hence Lorentzian parallel. 
\par
Next we consider the case where $\Pi_0$ and $\Pi_1$ are not parallel.  We 
first construct the required Lorentzian parallel family beginning with a 
specific orthonormal frame for $\Pi_0$.  Then, we explain how to modify 
this for a general orthonormal frame.  \par
We have $\Pi_0$ is defined by $\bn_0\cdot \bx = c_0$, and $\Pi_1$, by 
$\bn_1\cdot \bx = c_1$ with $\bn_0\cdot \bn_1 = \cos(\theta)$ for $-
\pi/2 < \theta < \pi/2$.  Then, as earlier in \S \ref{Sec:spec.cas}, $W = 
\Pi_0 \cap\Pi_1$ is a codimension $2$ affine subspace, and every 
hyperplane $\Pi_t$ in the Lorentzian geodesic flow $\gg(t) = (\bn_t, c_t 
\bge)$ from $\Pi_0$ to $\Pi_1$ contains $W$.  \par  
We let $e_2, \dots , e_{n-1}$ denote an orthonormal frame for $W$.  Then, 
the $e_i$ define constant vector fields $e_i$ along the Lorentzian 
geodesic with each $e_i \in W \subset \Pi_t$. These allow us to define 
$\tilde e_i$, 
which are parallel vector fields on $\gL^{n+1}$ (in fact $\cT^n$ along the 
Lorentzian geodesic $\gg(t)$.  \par 
Hence, to complete them to an orthonormal frame, we need only construct a 
unit vector field $e_{1, t}$ which is a smooth section of $\{\Pi_t\}$ 
orthogonal to $W$ for each $0 \leq t \leq 1$ and show that the resulting 
vector field $\tilde e_{1, t}$ is a Lorentzian parallel vector field on 
$\gg(t)$.  \par  
The subspace of any $\Pi_t$ orthogonal to $W$ is one dimensional, so 
there are two choices for a unit vector spanning it.   For $\Pi_0$ we 
choose $e_{1, 0}$ so that $\bn_0, e_{1, 0}, e_2, \dots , e_{n-1}$ is 
positively oriented for $\R^n$.  Likewise we choose $e_{1, 1}$ for $\Pi_1$ 
so that $\bn_1, e_{1, 1}, e_2, \dots , e_{n-1}$ is positively oriented for 
$\R^n$.  Then, we define 
\begin{equation}
\label{Eqn6.1a} e_{1, t} \,\, = \,\, \gl(t, \theta) e_{1, 1} \, + \, \gl(1-t, 
\theta) e_{1, 0}  
\end{equation}
We first claim that $e_{1, t}$ is a unit vector field such that $e_{1, t} \in 
\tilde \Pi_t$ for all $0 \leq t \leq 1$.  \par
That $e_{1, t}$ is a unit vector field follows from the calculation for 
$\bn_t$ in Proposition \ref{prop4.1}.  Second, we compute 
\begin{align}
\label{Eqn6.1}
e_{1, t}\cdot \bn_t \,\, &= \,\, (\gl(t, \theta) e_{1, 1} \, + \, \gl(1-t, 
\theta) e_{1, 0}) \cdot (\gl(t, \theta) \bn_1 \, + \, \gl(1-t, \theta) \bn_0)  
\notag \\
&= \,\, (\gl(t, \theta) \gl(1-t, \theta)) (e_{1, 1} \cdot \bn_0 + e_{1, 0} 
\cdot \bn_1)
\end{align}
To see the RHS of \eqref{Eqn6.1} is zero, we consider the two positively 
oriented orthonormal bases for $W^{\perp}$: $\bn_0, e_{1, 0}$ and $\bn_1, 
e_{1, 1}$.  If we represent $\bn_1 = a \bn_0 + b e_{1, 0}$, then necessarily 
$e_{1, 1} = -b \bn_0 + a e_{1, 0}$.  Then, 
$$ e_{1, 1} \cdot \bn_0 + e_{1, 0} \cdot \bn_1 \,\, = \,\, -b + b  \,\, = \,\, 
0 $$
\par
We also note that $e_{1, t}$ is orthogonal to $\tilde W$ for all $t$.  Thus, 
the resulting tangential vector fields $\tilde e_{1, t}, \tilde e_2, \dots , 
\tilde e_{n-1}$ are mutually Lorentz orthogonal and are Lorentzian unit 
vector fields.  The vector fields $\tilde e_i$, $i = 2, \dots , n-1$ are 
constant and hence Lorentzian parallel.  It remains to show that $\tilde 
e_{1, t}$ is Lorentzian parallel.  We claim that if $\gb(t) = 
\frac{1}{\theta} c^{\prime}_t$, then $\tilde e_{1, t} = (e_{1, t}, \gb(t) 
\cdot \ge)$ is a Lorentzian parallel tangent vector field along $\gg(t)$.  
As $\gg(t)$ is a Lorentzian geodesic, $\gg^{\prime}(t)$ is Lorentzian 
parallel along $\gg(t)$.  We will show that with the given $\gb(t)$, $\tilde 
e_{1, t} = \frac{1}{\theta} \gg^{\prime}(t)$.  
\par
From the proof of Proposition \ref{prop4.1}, by \eqref{Eqn3.12}, $\gg(t)$ 
can be written 
\begin{equation}
\label{Eqn6.2a}
 \gg(t) \,\, = \,\, (\cos(t \theta) \bn_0 \, + \, \sin (t \theta) \bw, (\cos(t 
\theta) c_0 \, + \, \frac{\sin (t \theta)}{\sin (\theta)}(c_1 - \cos \theta\, 
c_0)) \bge) 
\end{equation}
Hence, 
\begin{equation}
\label{Eqn6.3a}
 \gg^{\prime}(t) \,\, = \,\, \theta (-\sin(t \theta) \bn_0 \, + \, \cos (t 
\theta) \bw, (-\sin(t \theta) c_0 \, + \, 
\frac{\cos (t \theta)}{\sin (\theta)}(c_1 - \cos \theta\, 
c_0)) \bge) 
\end{equation}
Both $\{\bn_0, \bn^{\perp}_0\}$ and $\{\bn_1, \bn^{\perp}_1\}$ have 
positive orientation in $W^{\perp}$; hence $e_{1, 0} = \bn^{\perp}_0$ and 
$e_{1, 1} = \bn^{\perp}_1$   If we represent $\bn_1 = a \bn_0 + b 
\bn^{\perp}_0$, then $\bn^{\perp}_1 = -b \bn_0 + a \bn^{\perp}_0$ and in 
\eqref{Eqn6.3a} the unit vector $\bw = \bn^{\perp}_0$ if $b > 0$ and $-
\bn^{\perp}_0$ if $b < 0$.  \par  
Second we represent $e_{1, t}$ in the same form as \eqref{Eqn6.2a}.  To do 
so we compute the unit vector in the same direction as the projection of 
$e_{1, 1} (= \bn^{\perp}_1)$ along $e_{1, 0} (= \bn^{\perp}_0)$.  
Then, by the above, $e_{1, 1} = -b \bn_0 + a e_{1, 0}$.  Thus, the 
corresponding $\bw$ for this case is either $\bw_1 =  - \bn^{\perp}_0$ if 
$b > 0$ or $\bn^{\perp}_0$ if $b < 0$.  Thus, by the calculation in the proof 
of Proposition \ref{prop4.1}, in either case we obtain
\begin{align}
\label{Eqn6.4a}
 e_{1, t} \,\, &= \,\, \cos (t \theta) \bn^{\perp}_0  \, + \, \sin(t \theta) 
e_{1, 0}
\notag  \\
\,\, &= \,\, -\sin(t \theta) \bn_0 \, + \, \cos (t \theta) \bn^{\perp}_0
\end{align}
This equals the first component of \eqref{Eqn6.3a}, which implies the 
equality $e_{1, t} = \frac{1}{\theta} \bn^{\prime}_t$.  \par
The last step is to obtain the result for any orthonormal frame $\{f_{i, 
0}\}$ in $\Pi_0$.  
There is an orthogonal transformation $A$ so that $A(e_{i, 0}) = f_{i, 0}$.  
If we express $f_{i, 0} = \sum_{j = 1}^{n-1} a_{i, j} e_{j, 0}$.  Then, we can 
define vector fields $\tilde f_{i, t} = \sum_{j = 1}^{n-1} a_{i, j} \tilde 
e_{j, t}$.  Since the $\tilde f_{i, t}$ are constant linear combinations of 
Lorentzian parallel vector fields,and hence are Lorentzian parallel 
themselves.  As they are obtained by an orthogonal transformation of an 
orthonormal frame field, they also form an orthonormal frame field. 
\end{proof}
\par
\subsection*{Interpolating between Orthonormal Frames} 
\par
Now we consider given frames $\{e_{i, 0}\}$ in $\Pi_0$ and $\{f_{i, 1}\}$ 
in $\Pi_1$, such that $\{\bn_0, e_{1, 0}, \dots , e_{n-1, 0}\}$ and 
$\{\bn_1, f_{1, 0}, \dots , f_{n-1, 0}\}$ have the same orientation (which 
we may assume are positive.  We may first construct the Lorentzian 
parallel family of orthonormal frames $\{e_{i, t}\}$.  Then, the smoothly 
varying family $\{\bn_t, e_{1, t}, \dots , e_{n-1, t}\}$ will retain positive 
orientation for each $t$.  Hence, $\{e_{i, 1}\}$ and $\{f_{i, 1}\}$ have the 
same orientation.  Thus, there is an orthogonal transformation $B$ of 
$\Pi_1$ such that $B(e_{i, 1}) = f_{i, 1}$ and $\det(B) = 1$.  Again we may 
represent $B$ using the basis $\{e_{i, 1}\}$ by an orthogonal matrix $b_{i, 
j}$.  As $\det(B) = 1$, there is a one parameter family $\exp(s E)$ so that 
$\exp( E) = B$ for a skew symmetric matrix $E$.  Then, given a smooth 
nondecreasing function $\varphi : [0, 1] \to [0, 1]$ with $\varphi(0) = 0$ 
and $\varphi(1) = 1$, we can modify the Lorentzian parallel family $\{e_{i, 
t}\}$ by 
$\{ \exp(\varphi(t) E)(e_{i, t})\}$, which is a family of orthonormal 
frames.  In this family we see that the \lq\lq total amount of 
twisting\rq\rq\, from Lorentzian parallel family is given by the 
orthogonal transformation $B$ (or skew-symmetric matrix $E$).  The 
introduction of the twisting in the family is given by the function 
$\varphi$.   
\par
\begin{Example}[Planes in $\R^3$]
\label{Exam6.5}
\par
In the case of planes $\Pi_0$ and $\Pi_1$ in $\R^3$ with $\bn_0 \neq \pm 
\bn_1$ and $\bn_0 \cdot \bn_1 = \cos(\theta)$, we can easily construct 
the family of Lorentzian parallel frames by letting $e_{2, t}$ be a 
constant unit vector field in the direction of $\bn_0 \times \bn_1$, and 
$\tilde e_{1, t} = \frac{1}{\theta} \bn^{\prime}_t$ for the Lorentzian 
geodesic flow $\gg(t) = (\bn_t, c_t)$ from $\Pi_0$ to $\Pi_1$.  Then, 
$\{e_{1, t}, e_{2, t}\}$ gives a Lorentzian parallel family of frames.  \par
 If $e^{\prime}_1, e^{\prime}_2$ is another frame for $\Pi_0$ with the 
same orientation as $e_{1, 0}, e_{2, 0}$, then there is a rotation with 
rotation matrix $R$ so that $Re_{i, 0} = e^{\prime}_i$.  Then $\{R e_{1, t}, 
R e_{2, t}\}$ gives a Lorentzian parallel family of frames beginning 
with $e^{\prime}_1, e^{\prime}_2$.   Furthermore, if $\{ f_1, f_2\}$ is a 
positively oriented frame for $\Pi_1$, then there is a rotation matrix 
$S_{\gw}$ by an angle $\gw$ so that $S_{\gw}R e_{i, 1} = f_i$.  Then, for $\gw(t)$, 
$0 \leq t \leq 1$, a nondecreasing smooth function from $0$ to $1$, the 
family of rotations $S_{\gw(t)}$ provides an interpolating family 
$\{S_{\gw(t)}R e_{1, t}, S_{\gw(t)}R e_{2, t}\}$ from $\{e_{1, 0},  e_{2, 
t}\}$ to $\{ f_1, f_2\}$.  The flexibility in the choice of $\gw(t)$ allows 
for many criterion to be included in choosing the interpolation. \par
\begin{Remark}[Interpolation for Modeling Generalized Tube Structures]
\label{Rem6.5}
Generalized tube structures for a region $\gW$ can be modeled as a disjoint 
union $\gW = \cup_t \gW_t$ of planar regions $\gW_t \subset \Pi_t$ for a family 
of hyperplanes $\{\Pi_t\}$ along a central curve $\gg(t)$.  The geometric properties 
and structure of the tube can be computed using a smoothly varying family of 
frames $e_{j, t}$ for $\{\Pi_t\}$ (see e.g. \cite{D2} and \cite{D3}).  
	This is used in \cite{MZW} for the $3$-dimensional modeling of the human 
colon, where normal planes to an identified central curve are modified in high 
curvature regions to form a Lorentzian geodesic, and the family of frames with 
minimal twisting in the sense of Example \ref{Exam6.2a} are extended to a 
Lorentzian parallel family of frames in the modified family of planes.  This structure 
can then be deformed in various ways for better visualization.
\end{Remark}
\par 
\end{Example}
\begin{Example}[Family of Normal Planes to a Curve in $\R^3$]
\label{Exam6.6}
\par
A second situation is for a regular unit speed curve $\ga(t)$ in $\R^3$ 
with $\gk \neq 0$.  Then there is the Frenet frame $\{ \bT, \bN, \bB\}$ 
defined along $\ga(t)$.  Then, $\{\bN, \bB\}$ provides a family of 
orthonormal frame for the planes $\Pi_t$ passing through and orthogonal 
to $\ga(t)$.  The Lorentzian map for the family of planes is given by 
$\gg(t) = (\bT(t), \ga\cdot \ga^{\prime}\bge)$.  Then,  
$$  \gg^{\prime\prime}(t) = \gk^{\prime} \bN + \gk(-\gk T + \tau B), 
\frac{d^2 }{d^2t}(\ga\cdot \ga^{\prime})\bge )  $$
Thus, for this family to be a Lorentzian geodesic family of planes, 
$\gg^{\prime\prime}(t)$ must be Lorentzian orthogonal to $\gL^{4}$.  For 
this, we must have that the first term is a multiple of $\bT$, which 
implies $\gk^{\prime}, \tau \equiv 0$.  Thus, $\ga$ is a plane curve with 
constant curvature $\gk$, so it is a portion of a circle and $\ga\cdot 
\ga^{\prime} \equiv 0$.  Then, $B$ is constant so it is Lorentzian parallel, 
and $\gg^{\prime\prime}(t) = (-\gk^2 T, 0 \bge) = -\gk (-\gk T, 0 \bge)$, and 
so it follows that $(\bN, 0\bge)$ is Lorentzian parallel.  Hence, $\{ \bN, 
\bB\}$ is a Lorentzian parallel family of orthonormal frames.  We 
summarize this with the following
\begin{Proposition}
\label{Prop6.7}
If $\ga(t)$ is a regular unit speed curve in $\R^3$ with $\gk \neq 0$, then 
the family of normal planes $\Pi_t$ to $\ga(t)$ is a Lorentzian geodesic 
family iff $\ga(t)$ is a portion of a circle.  In this case the Frenet vector 
fields $\{\bN, \bB\}$ forms a Lorentzian parallel family of orthonormal 
frames in $\Pi_t$. 
\end{Proposition}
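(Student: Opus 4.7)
The plan is to follow the computation already sketched in Example \ref{Exam6.6}, but organize it carefully to extract a two-directional characterization and then treat the Lorentzian parallel claim as a separate step.

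First, I will identify the curve in $\gL^{4}$ associated to the family of normal planes. Since $\alpha(t)$ is unit speed, the normal plane $\Pi_t$ at $\alpha(t)$ has unit normal $\bT(t)$ and defining equation $\bT(t)\cdot \bx = \alpha(t)\cdot \bT(t) = \alpha(t)\cdot \alpha'(t)$. Hence the associated curve in $\cT^{3}\subset \gL^{4}$ is
\begin{equation*}
\gg(t) \,=\, (\bT(t),\, (\alpha\cdot\alpha')(t)\, \bge).
\end{equation*}
By Theorem \ref{Thm2.1}(4), $\gg(t)$ is a geodesic in $\gL^{4}$ precisely when, after reparametrization to constant Lorentzian speed, the acceleration $\gg''(t)$ is Lorentzian normal to $\gL^{4}$ at $\gg(t)$, i.e.\ a scalar multiple of $\gg(t)$ itself (since the position vector is Lorentzian normal to $\gL^{4}$).

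Next I will compute $\gg''(t)$ using the Frenet--Serret formulas $\bT'=\gk \bN$, $\bN'=-\gk \bT+\gt \bB$, $\bB'=-\gt \bN$. The first component gives
\begin{equation*}
\bT''(t) \,=\, \gk'\bN + \gk(-\gk\bT + \gt \bB) \,=\, -\gk^{2}\bT + \gk'\bN + \gk\gt\, \bB.
\end{equation*}
For this to be proportional to the first component $\bT(t)$ of $\gg(t)$, the $\bN$-- and $\bB$--components must vanish, and since $\gk\neq 0$ this forces $\gk'\equiv 0$ and $\gt\equiv 0$. By the fundamental theorem of space curves, the vanishing of $\gt$ together with $\gk$ constant implies $\alpha$ lies in a plane and has constant curvature, hence traces a circular arc. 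Conversely, if $\alpha$ is a circular arc, then after using the translation invariance established in Proposition \ref{prop5.1} I may place the center at the origin so that $\alpha\cdot\alpha'\equiv 0$; then $\gg(t)=(\bT(t),\,0)$, and $\gg''(t)=(-\gk^{2}\bT,\,0)=-\gk^{2}\gg(t)$, so $\gg''$ is Lorentzian normal to $\gL^{4}$ and $\gg$ is a geodesic. This establishes the \emph{iff}.

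Finally, for the second assertion, assume $\alpha$ is the circular arc above. I will verify that $\bB$ and $\bN$, viewed as sections of $\{\Pi_t\}$, satisfy the criterion of Lemma \ref{Lem6.1a}. Since $\gt\equiv 0$, we have $\bB'\equiv 0$, so $\bB$ is constant and lifts to the constant (hence Lorentzian parallel) vector field $(\bB,0\,\bge)$. For $\bN$, we compute $\bN'=-\gk \bT$, which is of the form $\varphi(t)\,\bn_{t}$ with $\varphi(t)=-\gk$ and $\bn_{t}=\bT$; moreover $c_{t}=\alpha\cdot\alpha'\equiv 0$, so $\int \varphi(t)\,c_{t}\,dt=0$ is a valid choice for $\gb(t)$. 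Hence by Lemma \ref{Lem6.1a} the lift $(\bN, 0\,\bge)$ is Lorentzian parallel along $\gg(t)$, and $\{\bN,\bB\}$ is the desired Lorentzian parallel orthonormal frame in $\Pi_t$.

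The computations are essentially all routine Frenet--Serret manipulations; the only genuinely conceptual point is recognizing that the geodesic condition in $\gL^{4}$ reduces to the requirement that $\gg''(t)$ lie in the one-dimensional Lorentzian normal direction at $\gg(t)$, and then carefully extracting the independent vanishing of $\gk'$ and $\gt$ from the $\bN$-- and $\bB$--components. After that, exploiting translation invariance to reduce to $\alpha\cdot\alpha'\equiv 0$ makes the parallel-frame verification essentially immediate from Lemma \ref{Lem6.1a}.
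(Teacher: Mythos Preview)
Your argument is correct and follows essentially the same approach as the paper's own treatment in Example~\ref{Exam6.6}: compute $\gg''$ via Frenet--Serret, read off $\gk'=\gt=0$ from the $\bN$-- and $\bB$--components, and then verify parallelism of $\bN,\bB$ directly. Your use of translation invariance (Proposition~\ref{prop5.1}) to reduce the converse to the case $\ga\cdot\ga'\equiv 0$, and your appeal to Lemma~\ref{Lem6.1a} for the parallel-frame check, are slightly more explicit than the paper's sketch but not a different route; the only cosmetic wrinkle is the phrase ``after reparametrization to constant Lorentzian speed,'' which is unnecessary since you (correctly) test normality of $\gg''$ in the given parametrization.
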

\end{Example}
\vspace{2ex}
\par
\section{\bf Dual Varieties and Singular Lorentzian Manifolds}
\label{Sec:dual.var} 
\par
Before continuing with the analysis of the geodesic flow in  $\gL^{n+1}$ 
and the induced flow between hypersurfaces in $\R^n$, we first explain 
the relation of the Lorentzian map with a corresponding map to the dual 
projective space.
\subsection*{Relation with the Dual Variety} \par
Suppose that $M \subset \R^n$ is a smooth hypersurface.  There is a 
natural way to associate a corresponding \lq\lq dual variety\rq\rq 
$M^{\vee}$ in the dual projective space $\R P^{n\, \vee}$ (which consists 
of lines through the origin in the dual space $\R^{n+1\, *}$).  Given a 
hyperplane $\Pi \subset \R^n$,  it is defined by an equation $\sum_{i 
=1}^{n} a_i x_i = b$.  We associate the linear form $\ga : \R^{n+1} \to \R$ 
defined by $\ga(x_1, \dots , x_{n+1}) = \sum_{i =1}^{n} a_i x_i - bx_{n+1}$.  
As the equation for $\Pi$ is only well defined up to multiplication by a 
constant, so is $\ga$, which defines a unique line in $\R^{n+1\, *}$.  This 
then defines a {\it dual mapping} $\gd : M \to \R P^{n\, \vee}$, sending $x 
\in M$ to the dual of $T_xM$.  \par
In the context of algebraic geometry in the complex case, this map actually 
extends to a dual map for a smooth codimension $1$ algebraic subvariety 
$M \subset \C P^{n}$, and then the image $M^{\vee} = \gd (M)$ is again a 
codimension $1$ algebraic subvariety of $\C P^{n\, \vee}$.  There is an 
inverse dual map $\gd^{\vee}$ for smooth codimension $1$ algebraic 
subvarieties of $\C P^{n\, \vee}$ to $\C P^{n}$ defined again using the 
tangent spaces.  Hence, $\gd^{\vee} : M^{\vee} \to \C P^{n}$.  It is only 
defined on smooth points of $M^{\vee}$ (which may have singularities); 
however it extends to the singular points of $M^{\vee}$ and its image is 
the original $M$.  \par
In our situation, we are working over the reals and moreover $M$ will not 
be defined algebraically.  Hence, we need to determine what properties both 
$\gd$ and $M^{\vee}$ have.  We also will explain the relation with the 
Lorentz map.  
\par
\subsection*{Legendrian Projections} 
\par
Given $M$, we let $P(\R^{n+1\,*})$ denote the projective bundle over $\R^n$ 
given by $\R^{n} \times \R P^{n\, \vee}$, where as earlier $\R P^{n\, \vee}$ denotes 
the dual projective space.  Then, we have an embedding $i : M \to 
P(\R^{n+1\,*})$, where $i(x) = (x, <\ga_x>)$, with $\ga_x$ the linear form 
associated to $T_xM$ as above.  We let $\tilde M = i(M)$.  There is a 
projection map $\pi : P(\R^{n+1\,*}) \to \R P^{n\, \vee}$.  Then, by results 
of Arnol\rq d \cite{A1}, $\pi$ is a Legendrian projection, and for generic $M$, 
$\tilde M$ is a generic Legendrian submanifold of $P(\R^{n+1\,*})$ and the 
restriction $\pi | \tilde M : \tilde M \to \R P^{n\, \vee}$ is a generic 
Legendrian projection.  This composition $\pi | \tilde M \circ i$ is exactly 
$\gd$.  Hence, the properties of $\gd$ are exactly those of the Legendrian 
projection.  In particular, the singularities of $M^{\vee} = \pi (\tilde M)$ are 
generic Legendrian singularities, which are the singularities appearing in 
discriminants of stable mappings, see \cite{A1} or \cite[Vol 2]{AGV}.  
\par
In the case of surfaces in $\R^3$, these are: cuspidal edge, a swallowtail, 
transverse intersections of two or three smooth surfaces, and the 
transverse intersection of a smooth surface with a cuspidal edge  (as 
shown in Fig. \ref{fig.1}).  The characterization of these singularities implies 
that as we approach a singular point from one of the connected components, 
then there is a unique limiting tangent plane, and in the case of the cuspidal 
edge or swallowtail, the limiting tangent plane is the same for each component.  
Hence, for generic smooth hypersurfaces $M \subset \R^n$, the inverse 
dual map $\gd^{\vee}$ extends to all of $M^{\vee}$, and again will have 
image $M$.  
\par

\begin{figure}[ht]
\centerline{\includegraphics[width=10cm]{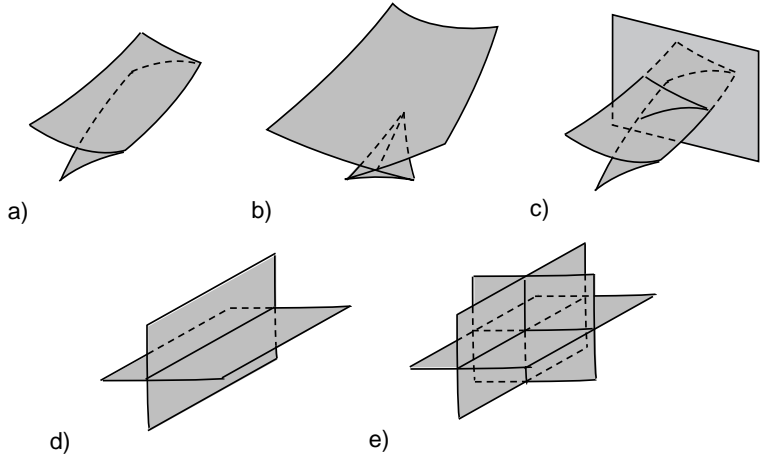}}
\caption{\label{fig.1}  Generic Singularities for Legendrian projections of 
Legendrian surfaces:  
a) cuspidal edge, b) swallowtail, c) transverse intersection of cuspidal 
edge and smooth surface,  d) transverse intersection of two smooth 
surfaces, and  e) transverse intersection of three smooth surfaces.}
\end{figure} 
 \par

\flushpar

\par
\vspace{2ex}
Finally, we remark about the relation between the dual variety $M^{\vee}$ 
and the image $M_{\cL} = \cL (M)$ (or $M_{\tilde \cL} = \tilde \cL (M)$).  
To do so, we introduce a mapping involving $\R P^{n\, \vee}$ and $\tilde 
\cT^n$.    
In $\R P^{n\, \vee}$, there is the distinguished point $\infty = <(0, \dots, 0, 1)>$.  
On $\R P^{n\, \vee} \backslash \{ \infty\}$, we may take a point 
$<(y_1, \dots , y_n, y_{n+1})>$, and normalize it by 
$$(y_1^{\prime}, \dots , y_n^{\prime}, y_{n+1}^{\prime}) = c\cdot (y_1, 
\dots , y_n, y_{n+1}), \quad \mbox{where }\, c = (\sum_{i = 1}^{n} 
y_i^2)^{-\frac{1}{2}}.$$  
Then, $\bn_y = (y_1^{\prime}, \dots , y_n^{\prime})$ is a unit vector.  We 
then define a map $\nu : \R P^{n\, \vee} \backslash \{ \infty\} \to \tilde 
\cT^n$ sending $<(y_1, \dots , y_n, y_{n+1})>$ to $(\bn_y, y_{n+1}^{\prime} 
\bge)$.  This is only well-defined up to multiplication by $-1$, which is why we 
must take the equivalence class in the pair of points. If we are on a region 
of $\R P^{n\, \vee} \backslash \{ \infty\}$ where we can smoothly choose a 
direction for each line corresponding to a point in $\R P^{n\, \vee}$, then as 
for the case of the Lorentzian mapping, we can give a well-defined map to 
$\cT^n$.  This will be so when we consider $M^{\vee}$ for the oriented case.  
In such a situation, when the smooth hypersurface $M$ has a smooth unit 
normal vector field $\bn$, it provides a positive direction in the line of 
linear forms vanishing on $T_xM$.  \par 
Then, we have the following relations.  
\begin{Lemma}
\label{Lem3.3}
The smooth mapping $\tilde \nu : \R P^{n\, \vee} \backslash \{ \infty\} \to 
\tilde \cT^n$ is a diffeomorphism.
\end{Lemma}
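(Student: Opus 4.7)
The plan is to construct an explicit inverse $\tilde \mu : \tilde\cT^n \to \R P^{n\,\vee}\setminus\{\infty\}$ and then verify smoothness of both maps locally. Given an equivalence class $[\pm(\bn, c\bge)] \in \tilde\cT^n$, I define $\tilde\mu([\pm(\bn,c\bge)]) = \langle (\bn,c)\rangle \in \R P^{n\,\vee}$, viewing $(\bn,c) = (n_1,\dots,n_n,c) \in \R^{n+1\,*}$. Since $\bn$ is a unit vector, $(\bn,c)$ is never proportional to $(0,\dots,0,1)$, so the image lies in $\R P^{n\,\vee}\setminus\{\infty\}$. The map is well defined on the quotient because $\langle (\bn,c)\rangle = \langle (-\bn,-c)\rangle$.

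Next I would verify that $\tilde\mu\circ\tilde\nu = \text{id}$ and $\tilde\nu\circ\tilde\mu = \text{id}$. For the first, start with $\langle (y_1,\dots,y_{n+1})\rangle$ with $(y_1,\dots,y_n)\neq 0$; applying $\tilde\nu$ gives the class of $(\bn_y, y_{n+1}^{\prime}\bge)$ where the normalization is $c=(\sum_{i=1}^n y_i^2)^{-1/2}$. Then $\tilde\mu$ sends this to $\langle (\bn_y, y_{n+1}^{\prime})\rangle = \langle c\cdot(y_1,\dots,y_n,y_{n+1})\rangle = \langle (y_1,\dots,y_{n+1})\rangle$. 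The other composition is similar: starting from $[\pm(\bn,c\bge)]$, passing through $\langle(\bn,c)\rangle$, and renormalizing returns the same class since $\bn$ is already a unit vector.

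For smoothness, I would argue locally. The map $\tilde\nu$ is given by the formula $(y_1,\dots,y_{n+1})\mapsto(c\,y_1,\dots,c\,y_n,c\,y_{n+1}\bge)$ with $c = (\sum_{i=1}^n y_i^2)^{-1/2}$, which is smooth in a neighborhood of any $\langle y\rangle$ with $(y_1,\dots,y_n)\neq 0$ (i.e.\ $\langle y\rangle \neq \infty$); composing with the quotient $\cT^n\to\tilde\cT^n$ yields a well-defined smooth map on $\R P^{n\,\vee}\setminus\{\infty\}$ because the two local sign choices of $c$ produce points that are identified in $\tilde\cT^n$. Conversely, $\tilde\mu$ is given locally by $(\bn,c\bge) \mapsto \langle (\bn,c)\rangle$, which is the restriction of the smooth quotient map $\R^{n+1\,*}\setminus\{0\}\to\R P^{n\,\vee}$, and it passes to a smooth map on $\tilde\cT^n$ because the two representatives $\pm(\bn,c\bge)$ span the same line.

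The main subtlety, rather than an obstacle, is keeping track of the two different quotient structures: $\R P^{n\,\vee}$ is a quotient by all nonzero scalars while $\tilde\cT^n$ is a quotient by $\{\pm 1\}$. The reason the map descends is that the unit-vector normalization in $\cT^n$ reduces the scaling ambiguity to exactly a sign, matching the equivalence in $\tilde\cT^n$. Once this compatibility is recorded, the smoothness and the inverse relations are immediate from the explicit formulas, and one concludes that $\tilde\nu$ is a diffeomorphism.
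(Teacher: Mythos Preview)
Your proposal is correct and follows essentially the same approach as the paper: construct an explicit inverse map $\tilde\mu$ from $\tilde\cT^n$ back to $\R P^{n\,\vee}\setminus\{\infty\}$, check it is well defined on the $\pm$-quotient, and verify smoothness. The paper's proof is terser (it simply writes down the inverse and asserts the verification is easy), while you spell out both compositions and the local smoothness argument, but the substance is the same. One cosmetic discrepancy: the paper writes the inverse as $(\bn,c\bge)\mapsto\langle(a_1,\dots,a_n,-c)\rangle$ with a sign on the last coordinate, whereas you use $\langle(\bn,c)\rangle$; your choice is the one that matches the stated formula for $\tilde\nu$ (which sends $\langle(y_1,\dots,y_{n+1})\rangle$ to the class of $(\bn_y,y_{n+1}'\bge)$ with no sign change), so your verification of the two compositions is the internally consistent one.
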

Second, there is the relation between the duality map  $\gd$ and the 
Lorentz map $\tilde \cL$ (or $\cL$).  
\begin{Lemma}
\label{Lem3.4}
If $M \subset \R^n$ is a smooth hypersurface, then the diagram 
(\ref{tag3.2}) commutes, i.e. 
$\tilde \nu \circ \gd = \tilde \cL$.  If, in addition, $M$ has a smooth unit 
normal vector field $\bn$, then there is the oriented version of diagram 
(\ref{tag3.2}),  $\nu \circ \gd = \cL$.  
\begin{eqnarray}
\label{tag3.2}
M & \overset{\gd}{\longrightarrow} & \R P^{n\, \vee}  \notag\\
   &     \underset{\tilde \cL\,\,}{\searrow}      &   \downarrow \tilde \nu  
\\
   &       &     \tilde \cT^n  \notag  
\end{eqnarray}
\end{Lemma}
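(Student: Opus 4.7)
The plan is to verify the commutativity by a direct coordinate computation, tracing both $\tilde \cL$ and $\tilde \nu \circ \gd$ on the tangent-plane data at an arbitrary point. Fix $x \in M$ and write $T_xM$ in the form $\bn \cdot \bx = c$ with $\bn$ a unit normal (choosing the prescribed normal in the oriented case).

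First I would compute $\gd(x)$ by homogenizing: the defining equation $\bn \cdot \bx - c = 0$ lifts to the linear form $\bn \cdot (x_1,\dots,x_n) - c\, x_{n+1}$ on $\R^{n+1}$, so by construction $\gd(x)$ is the projective class of the coefficient vector $(\bn, -c) \in \R^{n+1\,*}$. Next I would apply $\tilde \nu$ to this class. Because $\bn$ is already a Euclidean unit vector, the normalization factor $(\sum y_i^2)^{-\frac12}$ in the definition of $\tilde \nu$ equals $1$, so no rescaling is needed, and $\tilde \nu$ outputs the class in $\tilde \cT^n$ of $(\bn, -c\bge)$; equivalently, modulo the identification $z \sim -z$, the class of $(-\bn, c\bge)$. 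By Definition \ref{Def3.1}, $\tilde \cL(x)$ is the class of $(\bn, c\bge)$, and matching the two representatives in $\tilde \cT^n$ yields the desired identity $\tilde \nu \circ \gd = \tilde \cL$.

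For the oriented statement $\nu \circ \gd = \cL$, the same computation applies, but the smoothly varying unit normal $\bn$ on $M$ pins down a consistent representative of each projective class; this promotes $\nu$ to a well-defined map into $\cT^n$, and the identity holds at the level of $\cT^n$ rather than only of the $\pm$-quotient. The fact that the selection is smooth on a neighborhood of $x$ (which is all that is needed, since $\cL$ was already only locally defined in the general case) follows from the smoothness of $\bn$ together with Lemma \ref{Lem3.3}.

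The main obstacle is the sign bookkeeping: one must carefully track how the projective ambiguity $(\bn, -c) \sim (-\bn, c)$ in $\R P^{n\, \vee}$ interacts with the $z \sim -z$ identification defining $\tilde \cT^n$ and with the orientation convention of Definition \ref{Def3.1}. Once these conventions are aligned, the verification is essentially a tautology: both $\tilde \cL$ and $\tilde \nu \circ \gd$ repackage the same two pieces of data $(\bn, c)$ extracted from $T_xM$, the former directly as $(\bn, c\bge) \in \cT^n$ and the latter via the dual projective coefficient vector followed by the unit-length normalization that defines $\tilde \nu$.
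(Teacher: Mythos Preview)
Your approach is exactly the paper's: a direct coordinate check at a point. But your sign bookkeeping, which you correctly flag as the main obstacle, is actually wrong at the crucial step. Following the literal definition of $\tilde\nu$ you obtain the class of $(\bn,-c\bge)$, and you then assert that ``matching the two representatives'' with $\tilde\cL(x)=[(\bn,c\bge)]$ gives the commutativity. It does not: the identification defining $\tilde\cT^n$ is $z\sim -z$, which sends $(\bn,-c\bge)$ to $(-\bn,c\bge)$, \emph{not} to $(\bn,c\bge)$. For $c\neq 0$ these are genuinely different classes in $\tilde\cT^n$, so as written your argument fails. The same problem recurs, worse, in the oriented statement, where there is no quotient to hide behind.

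The resolution is that the paper's displayed definition of $\tilde\nu$ carries a sign slip. Compare with the inverse map constructed in the proof of Lemma~\ref{Lem3.3}, which sends $(\bn,c\bge)\mapsto\langle(\bn,-c)\rangle$; for this to be the inverse, $\tilde\nu$ must send $\langle(y_1,\dots,y_{n+1})\rangle$ to $(\bn_y,\,-y'_{n+1}\bge)$, with a sign change on the last coordinate. The paper's own proof of the present lemma uses exactly this corrected version: it computes $\tilde\nu(\langle(\bn,-c)\rangle)=(\bn,c\bge)$ directly, which then equals $\cL(x)$ on the nose, with no appeal to $z\sim -z$. Once you adopt the intended sign convention for $\tilde\nu$, your computation and the paper's coincide line for line.
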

\par

\vspace{2ex}
As a consequence of these Lemmas and our earlier discussion about the 
singularities of $M^{\vee}$, we conclude that $M_{\tilde \cL}$ (or 
$M_{\cL}$) have the same singularities.  Thus, we may suppose they are 
generic Legendrian singularities.  
\begin{Remark}
Although by Lemma \ref{Lem3.3} $\R P^{n\, \vee} \backslash 
\{ \infty\}$ is diffeomorphic to $\tilde \cT^n$, the first space has a natural 
Riemannian structure while on $\tilde \cT^n$ we have a Lorentzian metric.  
Hence, $\tilde \nu$ is not an isometry and does not map geodesics
 to geodesics.
\end{Remark}

\begin{proof}[Proof of Lemma \ref{Lem3.3}]
There is a natural inverse to $\tilde \nu$ defined as follows:  
If $z = (\bn, c\bge)$ and $\bn = (a_1, \dots , a_n)$, then we map $z$ to 
$<(a_1, \dots , a_n, -c)>$.  We note that replacing $z$ by $-z$ does not 
change the line $<(a_1, \dots , a_n, -c)>$.  This gives a well-defined 
smooth 
map $\tilde \cT^n \to \R P^{n\, \vee} \backslash \{ \infty\}$ which is 
easily 
checked to be the inverse of $\tilde \nu$.
\end{proof} 
\par
\begin{proof}[Proof of Lemma \ref{Lem3.4}]
If $T_xM$ is defined by $\bn\cdot \bx = c$ with $\bn = (a_1, \dots , a_n)$, 
then $\gd (x) = < (a_1, \dots , a_n, -c) >$.  Then, as $\| \bn\| = 1$, $\tilde 
\nu ( < (a_1, \dots , a_n, -c) >) = (a_1, \dots , a_n, c, c) = (\bn,  c \bge)$, 
which is exactly $\cL(x)$.
\end{proof} 
\par
\subsection*{Inverses of the Dual Variety and Lorentzian Mappings} 
\par
We consider how to invert both $\gd$ and $\tilde \cL$.  We earlier 
remarked 
that in the complex algebraic setting, the inverse to $\gd$ is again a dual 
map $\gd^{\vee}$.  As $\tilde \nu$ is a diffeomorphism, and diagram 
\ref{tag3.2} commutes, inverting $\gd$ is equivalent to inverting $\tilde 
\cL$.  Also, constructing an inverse is a local problem, so we may as well 
consider the oriented case.  
\begin{Proposition}
\label{Prop3.5}
Let $M \subset \R^n$ be a generic smooth hypersurface with a smooth unit 
normal vector field $\bn$.  Suppose that the image $M_{\cL}$ under $\cL$ 
is a smooth submanifold of $\cT^n$.  Then, $M$ is obtained as the envelope 
of the collection of hyperplanes defined by $\bn \cdot \bx = c$ for 
$\cL (x) = (\bn, c\bge)$.
\end{Proposition}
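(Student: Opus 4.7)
My plan is to work locally on $M$ and verify directly that the classical envelope conditions for the family of tangent hyperplanes $\{\bn(x)\cdot \by = c(x) : x \in M\}$ cut out exactly $M$. Since the envelope construction is local, I may pick local coordinates $u = (u_1,\dots,u_{n-1})$ on $M$ around a point $x_0$, giving a parametrization $x(u)$ of $M$. Write $\bn(u) = \bn(x(u))$ and $c(u) = \bn(u)\cdot x(u)$, so the family of tangent hyperplanes is defined by $F(u,\by) = \bn(u)\cdot \by - c(u) = 0$. The envelope is the set of $\by$ satisfying $F = 0$ together with $\partial F/\partial u_j = 0$ for $j = 1,\dots,n-1$.

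The first simplification is computing $\partial_j c = \partial_j \bn \cdot x(u) + \bn \cdot \partial_j x = \partial_j \bn \cdot x(u)$, since $\partial_j x$ is tangent to $M$ and $\bn$ is normal. Substituting, the envelope equations become
\begin{align*}
\bn(u) \cdot (\by - x(u)) &= 0, \\
\partial_j \bn(u) \cdot (\by - x(u)) &= 0 \quad (j = 1,\dots,n-1).
\end{align*}
Thus $\by - x(u)$ must be orthogonal to $\bn(u)$ and to every $\partial_j \bn(u)$. Clearly $\by = x(u)$ is a solution, showing that $M$ is contained in the envelope; the task is to show it is the \emph{only} local solution.

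This is where the hypothesis that $M_{\cL}$ is a smooth submanifold of $\cT^n$ enters. Differentiating $\cL(x) = (\bn(x), (\bn(x)\cdot x)\bge)$ along a tangent vector $v$ to $M$, we obtain
\[
d\cL(v) \,=\, (d\bn(v),\, (d\bn(v)\cdot x)\bge),
\]
again using $\bn\cdot v = 0$. Hence $\cL$ is an immersion iff $d\bn$ is injective on $T_xM$. Since $d\bn$ always lands in $T_{\bn(x)}S^{n-1} = \bn(x)^{\perp}$ and both spaces have dimension $n-1$, injectivity is equivalent to $d\bn$ being a linear isomorphism onto $\bn(x)^{\perp}$. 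Consequently $\{\partial_j \bn(u)\}_{j=1}^{n-1}$ spans $\bn(u)^{\perp}$, and the envelope conditions force $\by - x(u)$ to be orthogonal to all of $\R^n$, hence zero.

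The main subtlety (rather than a difficult obstacle) is justifying that the smoothness of $M_{\cL}$ really does translate into the full-rank condition on $d\bn$, which amounts to unpacking the differential of $\cL$ as above and using that $\bn \cdot v = 0$ on tangent vectors. The genericity hypothesis on $M$ is only used implicitly to ensure that the picture of $M_{\cL}$ and its associated envelope falls into the generic Legendrian framework described earlier in the section, so that the local inverse extends consistently from smooth points of $M_{\cL}$ to the whole image via the classification of generic Legendrian singularities. Once the immersion rank argument is made, the envelope equations uniquely recover $x(u)$, completing the identification of $M$ with the envelope of its tangent hyperplanes.
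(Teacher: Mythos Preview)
Your proof is correct and follows essentially the same line as the paper's: both set up the envelope system $\bn\cdot\bx = c$, $\bn_{u_i}\cdot\bx = c_{u_i}$, verify that the parametrization $x(u)$ of $M$ satisfies it (using $\bn\cdot x_{u_i} = 0$), and then argue uniqueness via the linear independence of $\bn, \bn_{u_1},\dots,\bn_{u_{n-1}}$.

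The one genuine difference is in how that linear independence is justified. You deduce it directly from the hypothesis that $M_{\cL}$ is a smooth submanifold, by computing $d\cL(v) = (d\bn(v), (d\bn(v)\cdot x)\bge)$ and observing that $\cL$ is an immersion iff $d\bn$ is injective, hence iff $\{\bn,\partial_1\bn,\dots,\partial_{n-1}\bn\}$ is a basis of $\R^n$. The paper instead ties the independence to the shape operator (since $\bn_{u_i} = -S(\partial/\partial u_i)$), notes that it fails exactly at parabolic points, uses genericity of $M$ to say these form a codimension-$1$ stratified set, establishes that the envelope equals $M$ off that set via the implicit function theorem, and then passes to the closure to recover all of $M$. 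Your route makes more direct use of the stated smoothness hypothesis and avoids the closure step; the paper's route is what actually exercises the genericity assumption, and makes explicit the connection to the differential geometry of $M$ via the shape operator.
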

\par
\begin{proof}[Proof of Proposition \ref{Prop3.5}]
We consider an $(n-1)$-dimensional submanifold of $\cT^n$ parametrized 
by $u \in U$ given by $(\bn (u), c(u)\bge)$.  The collection of hyperplanes are 
given by $\Pi_u$ defined by $F(\bx, u) = \bn (u)\cdot \bx - c(u) = 0$. 
Then, the envelope is defined by the collection of equations 
$F_{u_i} = 0, i = 1, \dots , n-1$ and $F = 0$.  This is the system of linear equations 
\begin{equation}
\label{Eqn3.8}
i)\,\,\, \bn (u)\cdot \bx = c(u) \qquad \mbox{and }\, ii)\, \,\, 
\bn_{u_i} (u)\cdot \bx = c_{u_i}(u), \,\, i = 1, \dots , n-1  
\end{equation}

A sufficient condition that there exist for a given $u$ a unique solution to 
the system of linear equations in $\bx$ is that the vectors 
$\bn, \bn_{u_1}, \dots , \bn_{u_{n-1}}$ are linearly independent.  Since 
$\bn_{u_i} = - S(\pd{ }{u_i})$, for $S$ the shape operator for $M$, linear 
independence is equivalent to $S$ not having any $0$-eigenvalues.  
Thus, $\bx$ is not a parabolic point of $M$.  For generic $M$, the set of 
parabolic points is a stratified set of codimension $1$ in $M$.  Thus, off 
the image of this set, there is a  unique point in the envelope.  \par
Also, if we differentiate equation (\ref{Eqn3.8})-i) with respect to $u_i$ 
we 
obtain 
\begin{equation}
\label{Eqn3.9}
\bn_{u_i} (u)\cdot \bx \, + \, \bn (u)\cdot \bx_{u_i} \,\,  = \,\, c_{u_i}(u)
\end{equation}
Combining this with (\ref{Eqn3.8})-ii), we obtain 
\begin{equation}
\label{Eqn3.10}
\bn (u)\cdot \bx_{u_i} \,\,  = \,\, 0,
\end{equation}
and conversely, (\ref{Eqn3.10}) for $i = 1, \dots , n-1$ and (\ref{Eqn3.9}) 
imply (\ref{Eqn3.8})-ii).  Thus, if we choose a local parametrization of 
$M$ 
given by $\bx(u)$, then as $\bx(u)$ is a point in its tangent space, it 
satisfies (\ref{Eqn3.8})-i), and hence (\ref{Eqn3.9}), and also $\bn$ being a 
normal vector field  implies that (\ref{Eqn3.10}) is satisfied for all $i$.  
Thus, (\ref{Eqn3.8})-ii) is satisfied.  Hence, $M$ is part of the envelope.  
Also, for generic points of $M$, by the implicit function theorem, the set 
of 
solutions of (\ref{Eqn3.8}) is locally a submanifold of dimension $n-1$.  
Hence, in a neighborhood of these generic points of $M$, the envelope is 
exactly $M$.  Hence, the closure of this set is all of $M$ and still consists 
of 
solutions of (\ref{Eqn3.8}).  Thus, we recover $M$.  \par
Second, to see that the equations (\ref{Eqn3.8}) describe the inverse of 
the 
dual mapping, we note by Lemmas \ref{Lem3.3} and \ref{Lem3.3} that 
$\tilde \nu$ is a diffeomorphism, $\gd^{-1} = \tilde \cL^{-1} \circ \tilde 
\nu$, and the preceding argument gives the local inverse to $\tilde \cL$.
\end{proof} 
\par

\section{\bf Sufficient Condition for Smoothness of Envelopes}
\label{Sec:smoot.lev} 
\par

To describe the induced \lq\lq geodesic flow\rq\rq between hypersurfaces 
$M_0$ and $M_1$ in $\R^n$, we will use the Lorentzian geodesic flow in 
$\cT^n$ and then find the corresponding flow by applying an inverse to 
$\cL$.  
We begin by constructing the inverse for a $(n-1)$-dimensional manifold 
in $\cT^n$ parametrized by $(\bn (u), c(u) \bge)$, where 
$u = (u_1, \dots , u_{n-1})$.  We determine when the associated family of 
hyperplanes $\Pi_u = \{ \bx \in \R^n: \bn(u) \cdot \bx = c(u)\}$ has as an 
envelope a smooth hypersurface in $\R^n$.  \par
We introduce a family of vectors in $\R^{n+1}$ given by $\tilde \bn(u) = 
(\bn(u), -c(u))$.  We also denote $\pd{\tilde \bn}{u_i}$ by $\tilde 
\bn_{u_i}$.  
Next we consider the $n$-fold cross product in $\R^{n+1}$, denoted by 
$v_1 \times v_2 \times \cdots \times v_n$, which is the vector in 
$\R^{n+1}$ whose $i$-th coordinate is $(-1)^{i+1}$ times the $n \times n$ 
determinant obtained from the entries of $v_1, \dots, v_n$ by removing 
the 
$i$-th entries of each $v_j$.  Then, for any other vector $v$, 
$$ v \cdot (v_1 \times v_2 \times \cdots \times v_n) \,\, = \,\, \det(v, 
v_1, \dots, v_n) $$
We let 
$$\tilde \bh  \,\, = \, \, \tilde \bn \times \tilde \bn_{u_1} \times \cdots 
\times \tilde \bn_{u_{n-1}}  $$
We let $H(\tilde \bn)$ denote the $(n-1) \times (n-1)$ matrix of vectors 
$\tilde \bn_{u_i \, u_j}$. Then we can form $H(\tilde \bn)\cdot \tilde 
\bh$ to 
be the $(n-1) \times (n-1)$ matrix with entries $\tilde \bn_{u_i \, 
u_j}\cdot 
\tilde \bh$.  
Then, there is the following determination of the properties of the 
envelope 
of $\{ \Pi_u\}$.
\begin{Proposition}
\label{Prop5.1}
Suppose we have an $(n-1)$-dimensional manifold in $\cT^n$ parametrized 
by $(\bn (u), c(u) \bge)$, where $u = (u_1, \dots , u_{n-1})$.  We let 
$\{ \Pi_u\}$ denote the associated family of hyperplanes.  Then, the 
envelope of $\{ \Pi_u\}$ has the following properties.
\begin{itemize}
\item[i)]  There is a unique point $\bx_0$ on the envelope corresponding to 
$u_0$ provided $\bn(u_0), \bn_{u_1}(u_0), \dots, \bn_{u_{n-1}}(u_0)$ are 
linearly independent.  Then, the point is the solution of the system of equations (\ref{Eqn3.8}).
\item[ii)]  Provided i) holds, the envelope is smooth at $\bx_0$ provided 
$H(\tilde \bn)\cdot \tilde \bh$ is nonsingular for $u = u_0$.
\item[iii)]  Provided ii) holds, the normal to the surface at $\bx_0$ is 
$\bn(u_0)$ and $\Pi_{u_0}$ is the tangent plane at $\bx_0$.
\end{itemize}
\end{Proposition}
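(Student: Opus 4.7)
The plan is to invert the linear system (3.8) directly and then differentiate its solution to test when the parametrized envelope is an immersion. Part (i) is essentially a restatement of the computation in the proof of Proposition \ref{Prop3.5}: the $n \times n$ coefficient matrix of the system in the unknowns $\bx = (x_1, \dots, x_n)$ has rows $\bn(u_0), \bn_{u_1}(u_0), \dots , \bn_{u_{n-1}}(u_0)$, and the hypothesized linear independence yields the unique solution $\bx_0$.

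The central step for (ii) is to identify $(\bx, 1) \in \R^{n+1}$ as a nonzero scalar multiple of $\tilde \bh$. The equations (3.8) are exactly $\tilde \bn \cdot (\bx, 1) = 0$ and $\tilde \bn_{u_i} \cdot (\bx, 1) = 0$ for all $i$, so $(\bx, 1)$ is Euclidean-orthogonal in $\R^{n+1}$ to the $n$ vectors $\tilde \bn, \tilde \bn_{u_1}, \dots , \tilde \bn_{u_{n-1}}$, which are linearly independent because their projections onto the first $n$ coordinates are so by (i). Hence $(\bx, 1) = \tilde \bh / h_{n+1}$, where $h_{n+1}$ is the last coordinate of $\tilde \bh$, equal up to sign to $\det(\bn, \bn_{u_1}, \dots , \bn_{u_{n-1}}) \neq 0$.

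Next I would differentiate (3.8) with respect to $u_j$. From (3.8)-i and (3.8)-ii one obtains $\bn \cdot \bx_{u_j} = 0$; from (3.8)-ii one obtains $\bn_{u_i} \cdot \bx_{u_j} = c_{u_i u_j} - \bn_{u_i u_j} \cdot \bx = -\tilde \bn_{u_i u_j} \cdot (\bx, 1) = -\frac{1}{h_{n+1}} \tilde \bn_{u_i u_j} \cdot \tilde \bh$. The linear map sending $v \in \bn^{\perp}$ to $(\bn_{u_i} \cdot v)_{i=1}^{n-1} \in \R^{n-1}$ is an isomorphism, since $\bn, \bn_{u_1}, \dots , \bn_{u_{n-1}}$ already span $\R^n$. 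Consequently $\bx_{u_1}, \dots , \bx_{u_{n-1}}$ are linearly independent (equivalently, the parametrization $u \mapsto \bx(u)$ is an immersion, so the envelope is smooth at $\bx_0$) if and only if the matrix with $(i,j)$-entry $\bn_{u_i} \cdot \bx_{u_j}$ is nonsingular, if and only if $H(\tilde \bn) \cdot \tilde \bh$ is nonsingular, proving (ii). Part (iii) then follows: $\bn(u_0) \cdot \bx_{u_j} = 0$ for all $j$ shows $\bn(u_0)$ is normal to the envelope at $\bx_0$, and (3.8)-i says $\bx_0 \in \Pi_{u_0}$, so $\Pi_{u_0}$ is the tangent plane.

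The main obstacle I anticipate is the bookkeeping in (ii) that converts the rank condition on the Jacobian of $u \mapsto \bx(u)$ into the nonsingularity of the matrix $H(\tilde \bn) \cdot \tilde \bh$ as formulated in the statement. The observation that $(\bx, 1)$ is proportional to $\tilde \bh$ is what makes the translation clean: it turns expressions like $\tilde \bn_{u_i u_j} \cdot (\bx, 1)$, which arise naturally from implicit differentiation, into the matrix entries $\tilde \bn_{u_i u_j} \cdot \tilde \bh$ appearing in the statement, up to the harmless nonzero scalar $h_{n+1}^{-1}$.
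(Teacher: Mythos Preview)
Your argument is correct, and for parts (i) and (iii) it matches the paper's proof essentially verbatim. For part (ii), however, you take a genuinely different route from the paper.

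The paper does not differentiate the solution $\bx(u)$ and test for an immersion. Instead it invokes the classical envelope/discriminant criterion: the envelope of $F(\bx,u)=\bn(u)\cdot\bx - c(u)=0$ is smooth at $\bx_0$ provided the partial Hessian $\bigl(\partial^2 F/\partial u_i\partial u_j\bigr)(\bx_0,u_0)$ is nonsingular. That Hessian is $H(\tilde\bn)\cdot\tilde\bx_0$ with $\tilde\bx_0=(\bx_0,1)$, and then Cramer's rule is used to see that $\tilde\bx_0$ is a nonzero scalar multiple of $\tilde\bh$, yielding the stated condition. Your approach differentiates the defining equations to obtain $\bn_{u_i}\cdot\bx_{u_j} = -\tilde\bn_{u_iu_j}\cdot(\bx,1)$, identifies $(\bx,1)$ with $\tilde\bh/h_{n+1}$ by orthogonality (the same algebraic fact, reached without Cramer), and then uses that $v\mapsto(\bn_{u_i}\cdot v)$ is an isomorphism on $\bn^{\perp}$ to convert nonsingularity of $(\bn_{u_i}\cdot\bx_{u_j})$ into linear independence of the $\bx_{u_j}$. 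Your route is self-contained and avoids the black-box discriminant lemma; it also yields an ``if and only if'' for the immersion condition, whereas the paper's phrasing gives only a sufficient condition. The paper's route is shorter once one accepts the classical criterion, and it makes explicit the connection to discriminants of stable mappings that is used later in the Legendrian singularity discussion.
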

\begin{proof}[Proof of Proposition \ref{Prop5.1}]
We use the line of reasoning for Proposition \ref{Prop3.5}.  the condition 
that a point $\bx_0$ belong to the envelope of $\{ \Pi_u\}$ is that it satisfy 
the system of equations (\ref{Eqn3.8}).  A sufficient condition that these 
equations have a unique solution for $u = u_0$ is exactly that $\bn(u_0), 
\bn_{u_1}(u_0), \dots, \bn_{u_{n-1}}(u_0)$ are linearly independent.   \par 
Furthermore, if this is true at $u_0$ then it is true in a neighborhood of 
$u_0$.  Thus, we have a unique smooth mapping $\bx(u)$ from a 
neighborhood of $u_0$ to $\R^n$.  By the argument used to deduce 
(\ref{Eqn3.10}), we also conclude
\begin{equation}
\label{Eqn5.2}
\bn (u)\cdot \bx_{u_i} \,\,  = \,\, 0,  \qquad i = 1, \dots, n-1
\end{equation}
Hence, if $\bx(u)$ is nonsingular at $u_0$, then $\bn(u_0)$ is the normal 
vector to the envelope hypersurface at $\bx_0$, so the tangent plane is 
$\Pi_{u_0}$.  Thus iii) is true. \par
It remains to establish the criterion for smoothness in ii).  As earlier 
mentioned the envelope in the neighborhood of a point $\bx_0$ is the 
discriminant of the projection of $V = \{(\bx, u) : F(\bx, u) = \bn (u)\cdot 
\bx - c(u) = 0\}$ to $\R^n$.  It is a standard classical result that at a point 
$(\bx_0, u_0) \in V$, which projects to an envelope point $\bx_0$, the 
envelope is smooth at $\bx_0$ provided $(\bx_0, u_0)$ is a regular point 
of $F$ (so $V$ is smooth in a neighborhood of $(\bx_0, u_0)$) and the partial 
Hessian $(\pd{^2F}{u_i\, u_j}(\bx_0, u_0))$ is nonsingular.  For our 
particular $F$ this Hessian becomes $H(\bn)\cdot \bx_0 - H(c)$, where 
$H(\bn)$ is the $(n-1) \times (n-1) $ matrix of vectors $(\bn_{u_i \, u_j})$, and $H(\bn)\cdot \bx_0$ denotes the $(n-1) \times (n-1)$ matrix whose entries 
are $\bn_{u_i \, u_j} \cdot \bx_0$.  This can also be written $H(\tilde \bn)\cdot \tilde \bx_0$, where $\tilde \bx_0$ is the extension of $\bx_0$ to $\R^{n+1}$ by adding 
$1$ as the $n+1$-st coordinate. \par  
Now $\bx_0$ is the unique solution of the system of linear equations 
(\ref{Eqn3.8}).  This solution is given by Cramer\rq s rule.   Let $N(u_0)$ 
denote the $n \times n$ matrix with columns $\bn(u_0), \bn_{u_1}(u_0), 
\dots , \bn_{u_{n-1}}(u_0)$.  By Cramer\rq s rule, if we multiply 
$\tilde \bx_0$ by $\det (N(u_0))$ we obtain $(-1)^n \tilde \bh$.  Thus, 
multiplying $H(\tilde \bn)\cdot \tilde \bx_0$ by $\det (N(u_0))$ yields 
$(-1)^n H(\tilde \bn)\cdot \tilde \bh$.  Hence, the nonsingularity of 
$H(\tilde \bn)\cdot \tilde \bh$ implies that of $(\pd{^2F}{u_i\, u_j}(\bx_0, u_0))$.
\end{proof}

\par
Although Proposition \ref{Prop5.1} handles the case of a smooth manifold 
in $\cT^n$, we saw in \S \ref{Sec:dual.var} that usually the image in 
$\cT^n$ of a generic 
hypersurface $M$ in $\R^n$ will have Legendrian singularities and the 
image itself is a Whitney stratified set $\tilde M$.  Next, we deduce the 
condition ensuring that the  envelope is smooth at a singular point 
$\bx_0$.  \par
Because $\tilde M$ has Legendrian singularities, it has a special property.  
To expain it we use a special property which holds for certain Whitney 
stratified sets.
\begin{Definition}
\label{Def5.2}  An $m$-dimensional Whitney stratified set $M \subset \R^k$ 
has the {\em Unique Limiting Tangent Space Property} (ULT property) if 
for any $x \in M_{sing}$, a singular point of $M$, there is a unique 
$m$-plane $\Pi \subset \R^k$ such that for any sequence $\{ x_i \}$ of 
smooth points in $M_{reg}$ such that $\lim x_i = x$, we have 
$\lim T_{x_i}M = \Pi$
\end{Definition}
\par
\begin{Lemma}
\label{Lem5.3}
For a generic Legendrian hypersurfaces $M \subset \R^n$, if $z \in \tilde M$, then 
$\tilde M$ can be locally represented in a neighborhood of $z$ as a finite 
transverse union of $(n-1)$-dimensional Whitney stratified sets $Y_i$ 
each 
having the ULT property.
\end{Lemma}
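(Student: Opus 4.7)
My plan is to reduce the statement to the local normal forms for generic Legendrian projections and then verify the ULT property one branch at a time. Since $M$ is generic, the Legendrian lift $\tilde M \subset P(\R^{n+1\,*})$ is a smooth $(n-1)$-dimensional Legendrian submanifold and $\pi|\tilde M$ has only generic Legendrian singularities. Near a point $z$ of the image in $\R^n$, the fiber $(\pi|\tilde M)^{-1}(z)$ consists of finitely many points $\tilde z_1, \dots, \tilde z_r$, each of which has a neighborhood $V_i$ in $\tilde M$ on which the projection has one of the standard normal forms (smooth sheet, cuspidal edge, swallowtail, and their higher-dimensional analogues) recalled in Section \ref{Sec:dual.var} following \cite{A1}, \cite{AGV}. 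Setting $Y_i = \pi(V_i)$ produces the candidate decomposition near $z$, and each $Y_i$ inherits a canonical Whitney stratification by the corank strata of $\pi|V_i$.

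Next I would check the ULT property branch by branch. The central observation is that although $\pi|V_i$ may be singular at $\tilde z_i$, the Legendrian manifold $V_i$ itself is smooth there, and the fiber coordinate of $V_i$ in $P(\R^{n+1\,*})$ records exactly the tangent hyperplane associated to the image point. A sequence $x_k \to z$ in the smooth locus of $Y_i$ lifts uniquely inside $V_i$ to a sequence $\tilde x_k \to \tilde z_i$, and $T_{x_k}(Y_i)_{\mathrm{reg}}$ is precisely the contact hyperplane at $\tilde x_k$. Continuity of the contact structure together with smoothness of $V_i$ forces $T_{x_k}(Y_i)_{\mathrm{reg}}$ to converge to the contact hyperplane at $\tilde z_i$ independently of the chosen sequence, which is the ULT property for $Y_i$.

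For the transverse union claim I would argue that the $r$ limiting hyperplanes coming from $\tilde z_1, \dots, \tilde z_r$ are distinct and in general position, so the Whitney-stratified branches $Y_1, \dots, Y_r$ meet transversally stratum by stratum. For $r \geq 2$ this is where \emph{genericity} of $M$ is being used: it is exactly the openness condition that excludes simultaneous tangencies of the various Legendrian sheets over $z$ and that rules out non-transverse configurations in the classification list recalled above.

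The main obstacle I expect is the stratum-by-stratum transversality statement rather than the ULT property, which is essentially automatic from the smoothness of the Legendrian lift. Concretely, one must verify, for instance, that a cuspidal edge carried by one branch meets the smooth interior of another branch transversally, which is a jet-level condition on the Legendrian generating families. The cleanest way to dispatch this is to appeal directly to the enumeration of stable generic Legendrian singularities of codimension at most $n-1$ in \cite{A1} and \cite[Vol.\ 2]{AGV}, where every permissible local configuration is already presented as a transverse union of the normal-form pieces; the lemma then follows by inspection of that list together with the branchwise ULT verification above.
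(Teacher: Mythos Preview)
Your argument is essentially correct, but it differs from the paper's in how the ULT property is established. The paper identifies each local branch $Y_i$ with the discriminant of a stable unfolding of a hypersurface germ (or a transverse section of one) and then invokes Saito's theorem \cite{S}, which asserts that such discriminants have a unique limiting tangent hyperplane. Your route is more direct and avoids Saito entirely: you use the smoothness of the Legendrian lift $V_i$ and biduality. One terminological point: what you call ``the contact hyperplane at $\tilde x_k$'' is not literally $T_{x_k}(Y_i)_{\mathrm{reg}}$; the clean statement is that the \emph{base} coordinate of the lift (the point $p_k \in M \subset \R^n$) determines $T_{x_k}(Y_i)_{\mathrm{reg}}$ via the biduality identity $T_{\gd(p)}M^{\vee} = \{\text{hyperplane dual to } p\}$, and continuity of $p_k \to p_i$ on the smooth $V_i$ then forces the unique limit. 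With that correction your ULT argument is valid and arguably more transparent for this particular situation than the appeal to Saito, which is a much more general structural result about free divisors.

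For the transverse-union part, your approach and the paper's coincide: both reduce to the classification of stable Legendrian multigerms in \cite{A1}, \cite{AGV}, where the image of a multigerm is already presented as a transverse union of the images of the individual germs. The paper phrases this as ``discriminants of stable unfoldings of multigerms are transverse unions of discriminants of the individual germs,'' which is exactly the content of your final paragraph. So on that half of the lemma there is no real difference.
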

Transverse union means that if $W_{i j}$ is the stratum of $Y_i$ 
containing 
$z$ than the $W_{i j}$ intersect transversally.  \par
\begin{proof} The Lemma follows because $\tilde M$ consists of generic 
Legendrian singularities, which are either stable (or topologically stable) 
Legendrian singularities.  These are either discriminants of stable 
unfoldings of multigerms of hypersurface singularities or transverse sections 
of such.  Such discriminants are transverse unions of discriminants of individual 
hypersurface singularities, each of which have the ULT property by a 
result of Saito [Sa].  This continues to hold for transverse sections.  
\end{proof}
We shall refer to these as the {\it local components} of $\tilde M$ in a 
neighborhood of $z$.
\par
There is then a corollary of the preceding.
\begin{Corollary}
\label{Cor5.4}
Suppose that $\tilde M$ is an $(n-1)$--dimensional Whitney stratified set in 
$\cT^n$ such that: at every smooth point $z$ of $\tilde M$, the hypotheses 
of Proposition \ref{Prop5.1} holds; and at all singular points $\tilde M$ is 
locally the finite union of Whitney stratified sets $Y_i$ each having the ULT 
property.  Then, 
\begin{itemize}
\item[i)] The envelope of $M$ of $\tilde M$ has a unique point $x \in M$ for 
each $z \in \tilde M_{reg}$, and $M$ is smooth at all points corresponding 
to 
points in $\tilde M_{reg}$.  
\item[ii)]  At each singular point $z$ of $\tilde M$, there is a point in $M$ 
corresponding to each local component of $\tilde M$ in a neighborhood of 
$z$. 
\end{itemize}
\end{Corollary}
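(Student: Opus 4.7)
The plan is to dispatch part (i) by direct application of Proposition \ref{Prop5.1}, and to establish part (ii) by a limiting argument that exploits the ULT property supplied by Lemma \ref{Lem5.3}.

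For part (i), I would fix a smooth point $z_0 \in \tilde M_{reg}$ and a local parametrization $(\bn(u), c(u)\bge)$ of $\tilde M$ near $z_0 = (\bn(u_0), c(u_0)\bge)$. The standing hypothesis that Proposition \ref{Prop5.1} applies at $z_0$ furnishes the linear independence of $\bn(u_0), \bn_{u_1}(u_0), \dots, \bn_{u_{n-1}}(u_0)$ and the nonsingularity of $H(\tilde\bn)\cdot\tilde\bh$ there.  Proposition \ref{Prop5.1}(i) then yields a unique envelope point $\bx_0$, and (ii) yields that $M$ is smooth at $\bx_0$, giving (i) of the corollary.

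For part (ii), I would fix a singular point $z \in \tilde M$ and invoke Lemma \ref{Lem5.3} to express $\tilde M$ locally near $z$ as a finite transverse union of Whitney stratified sets $Y_i$, each having the ULT property.  Fix a local component $Y_i$ and choose a sequence $z_k \in (Y_i)_{reg}$ with $z_k \to z$, avoiding the loci where $Y_i$ meets the other $Y_j$; such $z_k$ are smooth points of $\tilde M$, so by part (i) each $z_k$ determines a unique envelope point $\bx_k$ obtained by solving the linear system (\ref{Eqn3.8}). Via Cramer's rule, $\bx_k$ is a ratio of determinants which is an intrinsic continuous function of the pair $(z_k, T_{z_k}Y_i)$: a change of local parametrization of $Y_i$ alters the basis of $T_{z_k}Y_i$, but the associated row-reduction cancels in the quotient.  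The ULT property of $Y_i$ at $z$ says precisely that $(z_k, T_{z_k}Y_i)$ converges to a well-defined limiting pair $(z, \Pi_i)$ in the Grassmann bundle of $(n-1)$-planes over $\cT^n$.  Hence $\bx_k \to \bx_{(i)}$ for a limit depending only on the component $Y_i$, producing the required envelope point.

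The main obstacle lies in ensuring $\bx_{(i)}$ is finite, i.e.\ that the Cramer's-rule denominator $\det N(u_k)$ (the determinant whose columns are $\bn, \bn_{u_1}, \dots, \bn_{u_{n-1}}$) stays bounded away from zero along $z_k \to z$.  This is where the specific structure of generic Legendrian singularities in Lemma \ref{Lem5.3} must enter: each ULT component $Y_i$ is the projection of a smooth branch of the underlying Legendrian lift in $P(\R^{n+1,\,*})$, so the rank-$n$ condition persists in the limit because that lift remains smooth and transverse to the fiber direction at the point lying over $z$.  With this nondegeneracy at the limit in hand, continuity of Cramer's rule delivers the convergent limit $\bx_{(i)}$, and the uniqueness of $\Pi_i$ guarantees it is independent of the approximating sequence, completing the verification of (ii).
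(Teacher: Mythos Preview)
Your part (i) matches the paper exactly.  For part (ii) your route is different from the paper's.  The paper does not take sequential limits of envelope points at all; instead it invokes the diffeomorphism $\tilde\nu$ and the commutative diagram (\ref{tag3.2}) to identify the envelope construction with the inverse dual map $\gd^{\vee}$, and then simply \emph{defines} the envelope point attached to a local component $Y_i$ as $\gd^{\vee}$ applied to the unique limiting tangent hyperplane $\Pi_i$ furnished by the ULT property.  Since $\gd^{\vee}$ is a map on hyperplanes, the existence of $\Pi_i$ is all that is needed, and the boundedness issue you isolate never arises.

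Your approach is correct in spirit and essentially reproves, by explicit Cramer's-rule limits, what the duality formalism packages for free.  The price is the ``main obstacle'' you identify: you must argue separately that $\det N(u_k)$ does not degenerate along the sequence, and your resolution imports the smooth Legendrian lift from Lemma~\ref{Lem5.3}, which is stronger than the bare ULT hypothesis the corollary actually assumes.  The paper's argument avoids this by working directly with the limiting tangent plane rather than with limits of solutions; what it buys is that the well-definedness of the envelope point at $z$ reduces purely to the existence of $\Pi_i$, with no analytic estimate needed.  Conversely, your approach has the virtue of making explicit that the envelope point at $z$ really is the limit of nearby smooth envelope points, a continuity statement the paper's proof leaves implicit.
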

\begin{proof}
First, if $z \in \tilde M_{reg}$ and satisfies the conditions of Proposition 
\ref{Prop5.1}, then there is a unique envelope point corresponding to $z$ 
and the envelope is smooth at that point.  \par
Second, via the isomorphism $\tilde \nu$ and the commutative diagram 
(3.1), the envelope construction corresponds to the inverse $\gd^{\vee}$ of 
$\gd$ (or rather a local version since we have an orientation).  Under the 
isomorphism $\tilde \nu$, for each point $z \in \tilde M_{sing}$ there 
corresponds a unique point in the envelope for each local component of 
$\tilde M$ containing $z$. It is obtained as $\gd^{\vee}$ applied to the 
unique 
limiting tangent space of $z$ associated to the local component in $\tilde 
M_{reg}$.
\end{proof}
\section{\bf Induced Geodesic Flow between Hypersurfaces}
\label{Sec:ind.flow} 
\par
We can bring together the results of the previous sections to define the 
Lorentzian geodesic flow between two smooth generic hypersurfaces with 
a correspondence.  We denote our hypersurfaces by $M_0$ and $M_1$ and 
let $\chi : M_0 \to M_1$ be a diffeomorphism giving the correspondence.  
Note that we allow the hypersurfaces to have boundaries.  \par 
We suppose that both are oriented with unit normal vector fields $\bn_0$ 
and $\bn_1$.  We also need to know that they have a \lq\lq local relative 
orientation\rq\rq.
\begin{Definition}
\label{Def6.1}
We say that the oriented manifolds $M_0$ and $M_1$, with unit normal 
vector fields $\bn_0$ and $\bn_1$, and with correspondence $\chi : M_0 
\to M_1$ are {\em relatively oriented} if there is a smooth function 
$\theta(x) : M_0 \to (-\pi, \pi)$ such that $\bn_0(x) \cdot \bn_1(\chi (x)) = 
\theta(x)$ for all $x \in M_0$.  
\end{Definition} 
\par 
An example of a Lorentzian geodesic flow between curves in $\R^2$ is illustrated in Figure \ref{fig.10}. 
\par
\begin{figure}[ht]
\centerline{\includegraphics[width=6cm]{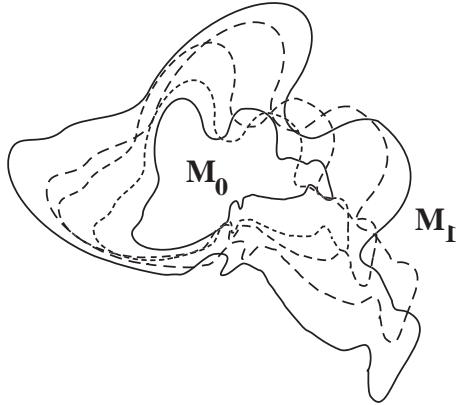}}
\caption{\label{fig.10}  A nonsingular Lorentzian Geodesic Flow between the 
curve $M_0$ in $\R^2$ and the curve $M_1$, which was obtained from $M_0$ 
via a composition of a rigid motion and a homothety.  The correspondence is 
given by the combined transformations, and then the relative orientation is a 
constant angle.  As remarked in b) of Figure \ref{fig.2} there does not exist a 
nonsingular geodesic flow between $M_0$ and $M_1$ in $\R^2$.}
\end{figure} 
\par
If the preceding example in Figure \ref{fig.10} is slightly perturbed, then the existence 
of a nonsingular Lorentzian flow is guaranteed by the next theorem.
\par
\begin{Thm}[ {\bf Existence, Smooth Dependence and Stability of 
Lorentzian 
Geodesic Flows} ]\hfill
\label{Thm6.1}
\par  
Suppose smooth generic hypersurfaces $M_0$ and $M_1$ are oriented by 
smooth unit normal vector fields $\bn_i, i = 0, 1$ and are relatively 
oriented by $\theta$ for the diffeomorphism $\chi : M_0 \to M_1$.  
\begin{enumerate}
\item {\em (Existence and Smoothness:) } Then for the given relative 
orientation, is a smooth Lorentzian 
geodesic flow $\psi_t : M_0 \times [0, 1] \to \cT^n$ between $M_0$ and 
$M_1$ given by \eqref{Eqn8.4a}.  \par
\item {\em (Stability:) } There is a neighborhood $\cU$ of $\chi$ in $\Diff 
(M_0, M_1)$ (for the $C^{\infty}$--topology) such that if $\chi^{\prime} 
\in \cU$, then $M_0$ and $M_1$ are relatively oriented for $\chi^{\prime}$ 
and the map $\Psi : \cU \to C^{\infty}(M_0 \times [0, 1], \cT^n)$ mapping 
$\chi^{\prime}$ to the associated Lorentzian flow $\tilde \psi^{\prime}_t$ 
is continuous.  \par
\item {\em (Smooth Dependence:) } Let $\chi_s : M_{0\, s}  \to M_{1\, s}$ 
be a smooth family of diffeomorphisms between smooth families of 
hypersurfaces for $s \in S$, a smooth manifold (i.e. $M_{i\, s}$ is the 
image of $M_i \times S$ under a smooth family of embeddings) so that 
$M_{0\, s}$ and $M_{1\, s}$ are relatively oriented for $\chi_s$ for each 
$s$ by a smooth map $\theta(x, s) : M_{0\, s} \to (-\pi, \pi)$ in $(x, s)$.  Then, 
the family of Lorentzian Geodesic flows 
$\tilde \psi_{s, t} : M_0 \times S \times [0, 1] \to \cT^n$ between 
$M_{0\, s}$ and $M_{1\, s}$ is a smooth function of $(x, s, t)$.
\end{enumerate}
\end{Thm}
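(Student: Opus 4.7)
The plan is to construct $\tilde \psi_t$ explicitly as the pointwise Lorentzian geodesic given by Proposition \ref{prop4.1}, and then to read off each of the three conclusions from the regularity of the defining formula.

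For part (1), I set $z_0(x) = \cL(x)$ and $z_1(x) = \cL(\chi(x))$ and, since $\theta(x) \in (-\pi,\pi)$ means $\bn_1(\chi(x)) \neq -\bn_0(x)$ pointwise, I apply Proposition \ref{prop4.1} at every $x$ to define
\begin{equation*}
\tilde \psi_t(x) \,\, = \,\, \gl(t,\theta(x))\, z_1(x) \, + \, \gl(1-t,\theta(x))\, z_0(x),
\end{equation*}
which is the Lorentzian geodesic in $\cT^n$ from $\cL(x)$ to $\cL(\chi(x))$ and lies in $\cT^n$ for all $t \in [0,1]$ by that proposition. Joint smoothness in $(x,t) \in M_0 \times [0,1]$ follows because $\gl$ is analytic on $\R \times (-\pi,\pi)$ (see the remark after \eqref{Eqn3.1}), $\theta$ is smooth by hypothesis, and $\cL$ and $\chi$ are smooth.

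For (2), composition $\chi' \mapsto \bn_1 \circ \chi'$ is continuous in the $C^\infty$-topology, hence so is $\chi' \mapsto \bn_0 \cdot (\bn_1 \circ \chi')$. The condition that $M_0, M_1$ be relatively oriented for $\chi'$ is that this dot product stays strictly above $-1$ on $M_0$, which is an open condition, so there is a neighborhood $\cU$ of $\chi$ on which $\theta'$ is unambiguously defined and smooth. On $\cU$, the map $\Psi$ is the composition of the continuous maps $\chi' \mapsto (\theta', \cL \circ \chi')$ and $(\theta', z_1') \mapsto \tilde \psi^{\prime}_t$ via the explicit formula, and is therefore continuous. For (3), the hypotheses give that $\theta(x,s)$, $\cL_s(x)$, and $\cL_s(\chi_s(x))$ are jointly smooth in $(x,s)$; substituting into the formula yields
\begin{equation*}
\tilde \psi_{s,t}(x) \,\, = \,\, \gl(t,\theta(x,s))\, \cL_s(\chi_s(x)) \, + \, \gl(1-t,\theta(x,s))\, \cL_s(x),
\end{equation*}
which is smooth in $(x,s,t)$ because each factor is.

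The main technical subtlety lies in (2), namely ensuring that the open condition $\bn_0 \cdot (\bn_1 \circ \chi') > -1$ really is preserved under perturbations when $M_0$ is not assumed compact. This forces the use of the Whitney $C^\infty$-topology on $\Diff(M_0,M_1)$, so that $\bn_1\circ\chi'$ is uniformly $C^0$-close to $\bn_1\circ\chi$ on all of $M_0$; if $M_0$ is compact this reduces to a straightforward supremum estimate. Apart from this uniformity issue, the theorem is essentially a transport of the analytic regularity of the pointwise Lorentzian geodesic formula of Proposition \ref{prop4.1} through smooth (resp.\ continuous, jointly smooth) dependence of its inputs.
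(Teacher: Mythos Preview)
Your proposal is correct and follows essentially the same approach as the paper: define the flow by the explicit formula of Proposition \ref{prop4.1} and read off each conclusion from the regularity of $\gl$ and of the input data, using continuity of composition in the Whitney $C^\infty$-topology for part (2). The paper handles the non-compactness issue in (2) by introducing an explicit tolerance function $\gd(x)$ tending to $0$ at infinity, which is just a concrete description of the Whitney neighborhood you invoke; note that such neighborhoods allow the tolerance to shrink at infinity rather than enforcing uniform $C^0$-closeness as your last paragraph suggests, but this does not affect the argument.
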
  
\par 
\begin{proof} \par
Using the form of the Lorentzian geodesic flow given by Proposition 
\ref{prop4.1} we have the Lorentzian geodesic flow is defined by
\begin{equation}
\label{Eqn8.4a}
  \psi_t(x) \quad = \quad \gl(t, \theta(x))\, z_1(x) \, + \,  \gl(1-t, 
\theta(x))\, z_0(x)   
\qquad \mbox{for }   0 \leq t \leq 1
\end{equation}
Here $z_0(x) = (\bn_0(x), c_0(x))$ for $T_xM_0$ defined by $\bn_0(x) 
\cdot \bx = c_0(x)$, and $z_1(x) = (\bn_1(x), c_1(x))$ for $T_{\chi(x)}M_1$ 
defined by $\bn_1(x) \cdot \bx = c_1(x)$.  As $z_i(x)$ and $\theta(x)$ 
depend smoothly on $x \in M$ and $\gl(t, \theta)$ is smooth on $[0, 1] 
\times (-\pi, \pi)$, $\psi_t(x) $ is smooth in $(x, t)$.  
Hence, the Lorentzian flow is a smooth well-defined flow between 
$(\bn_0(x), c_0(x)\bge )$ and $(\bn_1(\chi(x)), c_1(\chi(x))\bge )$  
\par
For smooth dependence 3), we use an analogous argument.  We use 
\eqref{Eqn8.4a} but with $\theta(x)$ replaced by $\theta(x, s)$ and each 
$z_i(x)$ by $z_i(x, s) = (\bn_1(x, s), c_1(x, s))$ where $T_xM_{0\, s}$ is 
defined by $\bn_0(x, s) \cdot \bx = c_0(x, s)$ and $T_{\chi(x, s)}M_{1\, s}$ 
is defined by $\bn_1(x, s) \cdot \bx = c_1(x, s)$.  
\par 
Finally to establish the stability, given $\chi$ for which $M_0$ and $M_1$ 
are relatively oriented via the smooth function $\theta(x)$, we let 
$\gd(x)$ be a smooth nonvanishing function such that  $\gd(x) < 1/3 (\pi - 
\theta(x))$ and $\lim \gd(x) = 0$ as $x$ approaches any \lq\lq unbounded 
boundary component at $\infty$\rq\rq\, of $M_0$.  Then, as $(-\pi, \pi)$ is 
contractible there is a Whitney open neighborhood $\cU$ of $\chi$ such that 
if $\chi^{\prime} \in \cU$ 
then there is a smooth $\theta^{\prime} : M_0 \to  (-\pi, \pi)$ such that 
$\bn_0(x) \cdot \bn_1(\chi^{\prime}(x)) = \cos(\theta^{\prime}(x))$ and 
$| \theta^{\prime}(x) - \theta(x)| < \gd(x)$ for all $x \in M_0$.  Furthermore, 
$\theta^{\prime}$ depends continuously on $\chi^{\prime}$.  
Thus, the corresponding flow in \eqref{Eqn8.4a} defined by 
$\theta^{\prime}$ depends continuously on $\chi^{\prime}$.  
\par
Specifically, given $\chi^{\prime} \in \cU$, consider the mapping 
$\chi^{\prime}_{\cL} : M_0 \to \cT^n \times \cT^n$ defined by $x \mapsto 
((\bn_0(x), c_0(x)), (\bn_1(x), c_1(x)))$, where $(\bn_0(x), c_0(x))$ defines 
the tangent space $T_{x}M_0$ and $(\bn_1(x), c_1(x))$ defines the tangent 
space $T_{\chi^{\prime}(x)}M_1$.  Then, $\chi^{\prime}_{\cL}$ is defined 
using the first derivatives of the embeddings $M_i \subset \R^n$ and 
$\chi^{\prime}$ composed with algebraic operations.  Each such operation 
is continous in the Whitney $C^{\infty}$--topology and so defines a 
continuous map $\cL^{\prime} : \cU \to C^{\infty}(M_0, \cT^n \times 
\cT^n)$.  Lastly, the Lorentzian flow $\psi_t$ is defined by (\ref{Eqn3.4}), 
and is the composition of $\cL^{\prime}$ with algebraic operations 
involving the smooth functions 
$\gl(x, \theta)$, and is again continuous in the $C^{\infty}$--topology.  
Hence, the combined composition mapping $\chi{\prime} \to \psi_t$ is 
continuous in the $C^{\infty}$--topology.  \par 

\end{proof}
\begin{SimpRemark}
We note there are two consequences of 2) of Theorem \ref{Thm6.1}.  First, 
$M_0$ and $M_1$ may remain fixed, but the correspondence $\chi$ varies 
in a family.  Then the corresponding Lorentzian geodesic flows vary in a 
family.  Second, $M_0$ and $M_1$ may vary in a family with a 
corresponding varying correspondence, then the Lorentzian geodesic flow 
will also vary smoothly in a family.
\end{SimpRemark} 
\par
\subsection*{Nonsingularity of Level Hypersurfaces of Lorentzian Geodesic Flows in $\R^n$} \hfill
\par
It remains to determine when the corresponding Lorentzian geodesic flows 
in $\R^n$ will have analogous properties.  We give a criterion involving a 
generalized eigenvalue for a pair of matrices.  \par
We consider the vector fields on $M_0$, $\bn_0(x)$ and $\bn_1(\chi(x))$.  
For any vector field $\bn(x)$ on $M_0$ with values in $\R^n$, we let  
$N(x) = (\bn(x)\, | \, d\bn(x))$ be the $n \times n$ matrix with columns 
$\bn(x)$ viewed as a column vector and $d\bn(x)$ the $n \times (n-1)$ 
Jacobian matrix.  If we have a local parametrization $\bx(u)$ of $M_0$, 
then we may represent the vector field $\bn$ as a function of $u$, 
$\bn(u)$.  Then, $N(\bx(u))$ is the $n \times n$ matrix with columns 
$\bn(u), \bn_{u_1}(u), \dots , \bn_{u_{n-1}} (u)$.  
We denote this matrix for $\bn_0$ by $N_0(x)$, and that for 
$\bn_1(\chi(x))$ by $N_1(x)$ (or $N_0(u)$ and $N_1(\chi(u))$ if we have 
parametrized $M_0$).  By i) of Proposition \ref{Prop5.1} the Lorentzian 
geodesic flow in $\R^n$ will be well-defined provided the corresponding 
matrix $N_t(x)$ is nonsingular at all points of the flow $\tilde \psi_t(x)$.  We determine this by decomposing $N_t(x)$ into two parts. \par
First, there is the parametrized family of $n \times 
n$--matrices 
\begin{equation}
\label{Eqn9.1}
\widetilde{N}_t(x) \,\, \overset{def}{=} \,\, \gl(t, \theta)\, N_1(x) \, + \,  
\gl(1-t, \theta)\, N_0(x)  
\end{equation}
This captures the change resulting from the change in $x$.  To also capture the change resulting from that in $\theta$, we introduce a second matrix $\pd{\theta}{\bu}\, \bn_0$ whose first column equals the vector $0$ and whose $j+1$--th column is the vector 
$\pd{\theta}{u_j}\, \bn_0$, for $j = 1, \dots , n-1$.
Then, the nonsingularity criterion will be based on whether the pair of matrices 
$(\tilde N_t(x), \pd{\theta}{\bu}\, \bn_0)$ does not have a specific 
generalized eigenvalue.  \par 
Specifically we introduce one more function.  
$$\gs(x, \theta)\,\, = \,\, \pd{\gl}{\theta}(x, \theta) - x \cot(\theta) \gl(x, \theta)  $$
\par
Then, we compute for $\theta \neq 0$
\begin{equation}
\label{Eqn9.5}
 \pd{\gl(x, \theta)}{\theta} \,\, = \,\,   \frac{x\, \sin(\theta) 
\cos (x \theta)\, -\, \sin(x \theta) \cos \theta}{\sin^2\theta}  
\end{equation}
and $\pd{\gl(x, \theta)}{\theta}_{| \theta = 0} = 0$.
Using \eqref{Eqn9.5}, a direct calculation shows for all $0 \leq x \leq 1$.
$$\gs(x, \theta)\,\, = \,\,  \frac{\cos ((1-x)\theta) \sin(x\theta) \, -\, x\, 
\sin \theta }{\sin (x\theta)\, \sin\theta}  \,\, = \,\,  \frac{\cos ((1-
x)\theta) }{\sin\theta} \, - \,  \frac{x}{\sin (x\theta)}   $$
if $0 < |\theta | < \pi$; and 
$$  \gs(x, 0)\,\, = \,\,  0    $$
\par
We also define 
\begin{equation}
\label{Eqn9.2}
N_t^{\prime}(x) \,\, \overset{def}{=} \,\, \tilde N_t(x)  \, + \, \gs(t, 
\theta)\, \pd{\theta}{\bu}\, \bn_0
\end{equation}
Then, for any pair $(x, t)$, $N_t^{\prime}(x)$ is 
singular  iff $-\gs(t, \theta(x))$ is a generalized  eigenvalue 
for $(\tilde N_t(x), \pd{\theta}{\bu}\, \bn_0)$. 
\par
Consider the Lorentzian geodesic flow $\tilde \psi_t(x) = 
(\bn_t(x), c_t(x)\bge)$ between $\cL(x) = (\bn_0(x), c_0(x)\bge)$ and 
$\cL(\chi(x)) = (\bn_1(\chi(x)), c_1(\chi(x))\bge)$ for all $x \in M_0$.  
We let $\tilde M_t = \tilde \psi_t(M_0)$, and we let $M_t$ denote the 
envelope of $\tilde M_t$.   \par 
Then there are the following properties for the envelopes $M_t$ of the 
flow for all time $0 \leq t \leq 1$.  \par
\begin{Thm}
\label{Thm9.2}
Suppose smooth generic hypersurfaces $M_0$ and $M_1$ are oriented by 
smooth unit normal vector fields $\bn_i, i = 0, 1$ and are relatively 
oriented by $\theta(u)$.  
Let $\tilde \psi_t$ be the Lorentzian geodesic flow between $\tilde M_0$ 
and $\tilde M_1$ which is smooth.  If $M_t$ is the family of envelopes 
obtained from the flow $\tilde M_t =  \tilde \psi_t(\tilde M_0)$, then 
suppose that for each time $t$, $\tilde M_t$ has only generic Legendrian 
singularities as in \S \ref{Sec:dual.var} (as e.g. in Fig. \ref{fig.1}).  Then, 
\begin{enumerate}
\item  $M_t$ will have a unique point corresponding to $z = \tilde 
\psi_t(x)  \in \tilde M_t$ provided \eqref{Eqn9.2} is nonsingular.  
\item  The envelope $M_t$ will be smooth at points corresponding to a 
smooth point $ z \in \tilde M_t$ satisfying (\ref{Eqn9.2}) provided 
 $H(\tilde \bn_t(x))\cdot \tilde \bh_t(x)$ is nonsingular.  Here $\tilde 
\bh_t(x)$ is defined from $\tilde \bn_t(x)$ as in \S  \ref{Sec:smoot.lev}.
\item  At points corresponding to singular points $z \in \tilde M_t$, there 
is a unique point on $M_t$ for each local component of $\tilde M$ in a 
neighborhood of $z$.  This point is the unique limit of the envelope points 
corresponding to smooth points of the component of $\tilde M_t$ 
approaching $z$.  
\end{enumerate}
\end{Thm}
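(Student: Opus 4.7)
The plan is to reduce Theorem \ref{Thm9.2} to a time-slice application of Proposition \ref{Prop5.1} and Corollary \ref{Cor5.4}, applied to the parametrized image $\tilde M_t = \tilde \psi_t(M_0)\subset \cT^n$ for each fixed $t\in[0,1]$. Statements (1) and (2) should follow from Proposition \ref{Prop5.1}(i) and (ii) applied to $\tilde M_t$, once the envelope nondegeneracy condition from that proposition is identified with \eqref{Eqn9.2}; statement (3) is then Corollary \ref{Cor5.4}(ii) together with Lemma \ref{Lem5.3}.

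First I would fix a local parametrization $\bx(u)$ of $M_0$ and use $\chi$ to transport it to $M_1$, so that $u\mapsto \tilde\psi_t(u)=(\bn_t(u),c_t(u)\bge)$ is a smooth local parametrization of $\tilde M_t$. From \eqref{Eqn8.4a},
\[
\bn_t(u) \;=\; \gl(t,\theta(u))\,\bn_1(\chi(u))\;+\;\gl(1-t,\theta(u))\,\bn_0(u),
\]
and the chain rule gives
\[
\pd{\bn_t}{u_j} \;=\; \gl(t,\theta)\pd{\bn_1}{u_j}+\gl(1-t,\theta)\pd{\bn_0}{u_j}+\theta_{u_j}\Bigl(\pd{\gl}{\theta}(t,\theta)\bn_1+\pd{\gl}{\theta}(1-t,\theta)\bn_0\Bigr).
\]
Let $N_t(x)$ denote the $n\times n$ matrix with columns $\bn_t(x),\pd{\bn_t}{u_1}(x),\ldots,\pd{\bn_t}{u_{n-1}}(x)$. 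By Proposition \ref{Prop5.1}(i), a unique envelope point exists at $\tilde \psi_t(x)$ exactly when $N_t(x)$ is nonsingular. The key step is to show $\det N_t(x)=\det N_t^{\prime}(x)$: the difference of their $(j+1)$-st columns is $\theta_{u_j}\bigl(\pd{\gl}{\theta}(t,\theta)\bn_1+\pd{\gl}{\theta}(1-t,\theta)\bn_0-\gs(t,\theta)\bn_0\bigr)$, and expanding $\bn_1=\cos\theta\,\bn_0+\sin\theta\,\bw$ as in the proof of Proposition \ref{prop4.1} places this vector in the two-plane spanned by $\bn_0$ and $\bn_t$. The function $\gs(t,\theta)$ is chosen precisely to cancel the stray $\bn_0$ component, so that the residual is a scalar multiple of the first column $\bn_t$; a column operation subtracting that multiple of the first column from each subsequent column then transforms $N_t(x)$ into $N_t^{\prime}(x)$, establishing (1).

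For (2), once $N_t^{\prime}(x)$ is nonsingular there is a local smooth inverse map $u\mapsto \bx_t(u)$ onto $M_t$, and Proposition \ref{Prop5.1}(ii) says the envelope is smooth at $\bx_t(u)$ exactly when the mixed Hessian $H(\tilde\bn_t)\cdot\tilde\bh_t$, built from $\tilde\bn_t=(\bn_t,-c_t)\in\R^{n+1}$ as in \S\ref{Sec:smoot.lev}, is nonsingular; this is the stated condition. For (3), the assumption that $\tilde M_t$ has only generic Legendrian singularities lets me invoke Lemma \ref{Lem5.3}: near any singular $z\in\tilde M_t$, the set $\tilde M_t$ is a transverse union of Whitney stratified pieces $Y_i$ each with the ULT property. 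Corollary \ref{Cor5.4}(ii) then yields one envelope point $\gd^{\vee}(z_i)$ for each such local component, realized as the limit of smooth envelope points obtained along a sequence in $Y_{i,\mathrm{reg}}$ approaching $z$, which is the continuity statement in (3).

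The main obstacle I anticipate is the trigonometric identity underlying the column reduction in step (1): one must verify that
\[
\pd{\gl}{\theta}(t,\theta)\bn_1+\pd{\gl}{\theta}(1-t,\theta)\bn_0-\gs(t,\theta)\bn_0
\]
is a scalar multiple of $\bn_t$. Writing both sides in the orthonormal basis $\{\bn_0,\bw\}$ reduces this to two scalar equations: the $\bw$-coefficient determines $\kappa=\pd{\gl}{\theta}(t,\theta)\sin\theta/\sin(t\theta)$, and then the $\bn_0$-coefficient forces the closed form for $\gs(t,\theta)$ in terms of $\pd{\gl}{\theta}(t,\theta)$, $\pd{\gl}{\theta}(1-t,\theta)$ and $\cot\theta$. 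Checking that this agrees with the definition of $\gs$ (equivalently, the identity $\gs(x,\theta)=\cos((1-x)\theta)/\sin\theta-x/\sin(x\theta)$ for $0<|\theta|<\pi$) is the one nontrivial algebraic computation; after that, the remainder of the proof assembles directly from the envelope machinery of \S\S\ref{Sec:dual.var}--\ref{Sec:smoot.lev}.
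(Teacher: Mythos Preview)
Your proposal is correct and follows essentially the same approach as the paper: parts (2) and (3) are reduced verbatim to Proposition~\ref{Prop5.1}(ii) and Corollary~\ref{Cor5.4} (via Lemma~\ref{Lem5.3}), and part (1) is reduced to Proposition~\ref{Prop5.1}(i) by showing that $N_t(x)$ and $N_t^{\prime}(x)$ have the same rank through column operations involving the first column $\bn_t$. The only organizational difference is that the paper carries out the column reduction in two successive subtractions of multiples of $\bn_t$ (first removing the $-\cot\theta\,\bn_t$ piece visible in \eqref{Eqn9.6}, then subtracting $\theta_{u_i}\,t\cot(t\theta)\,\bn_t$ to eliminate the $\bn_1$ term), arriving at \eqref{Eqn9.8} which is then identified with $\theta_{u_i}\,\gs(t,\theta)\,\bn_0$; you instead compute the total difference at once in the $\{\bn_0,\bw\}$ basis and solve for the single multiple $\kappa$, which indeed equals $t\cot(t\theta)-\cot\theta$, the sum of the paper's two subtractions.
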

\par
\begin{Remark}
We observe that as a result of Theorem \ref{Thm9.2}, we can remark about the 
uniqueness of the resulting geodesic flow from non-parabolic points of $M_0$.  Then, $N_0(u)$ is non singular for each non-parabolic point $\bx(u)$.  If $N_1(\chi(u))$ is 
sufficiently close to $N_0(u)$ then $\widetilde{N}_t(u)$ will be nonsingular.  This is 
given by a $C^1$-condition on the normal vector fields to the surfaces.  If in addition, 
$\theta(u)$, the angle between $\bn_0(u)$ and $\bn_1(\chi(u))$, has small variation as a function of $u$, then the term $\gs(t, \theta)\, \pd{\theta}{\bu}$ will be small in the $C^0$ sense.  Thus, if it is sufficiently small, then together with the $C^1$ closeness of (nonsingular) $N_0(u)$ and $N_1(\chi(u))$ implies that $N_t^{\prime}(u)$ is nonsingular.  Hence, by i) of Theorem \ref{Thm9.2} the flow is uniqely defined.  Together these are $C^2$ conditions on $N_0(u)$ and $N_1(\chi(u))$. \par
\end{Remark}
\begin{proof}[Proof of Theorem \ref{Thm9.2} ]
For 2), given that 1) holds, we may apply ii) of Proposition \ref{Prop5.1}.  
For 3) we may apply Corollary \ref{Cor5.4}.  To prove 1), we will apply i) 
of 
Proposition \ref{Prop5.1}.  We must give a sufficient condition that $N_t(x)$ 
is nonsingular for $0 \leq t \leq 1$.  We choose local coordinates $u$ for a 
neighborhood of $\bx_0$.  
For a geodesic $(\bn_t(u), c_t(u)\bge)$ between $(\bn_0(u), c_0(u)\bge)$ 
and $(\bn_1(u), c_1(u)\bge)$ given by (\ref{Eqn3.4}), we must compute 
$\bn_{t\, u_i}(u)$.  We note that not only $\bn_i,  i = 1, 2$ but also 
$\theta$ 
depends on $u$.  We obtain
\begin{equation}
\label{Eqn9.4}
  \bn_{t\, u_i} \,\, = \,\, \gl(t, \theta)\, \bn_{1\, u_i} \, + \,  \gl(1-t, 
\theta)\, \bn_{0\, u_i}   \, + \, \pd{\gl(t, \theta)}{u_i}\, \bn_1 \, + \,  
\pd{\gl(1-t, \theta)}{u_i}\, \bn_0
\end{equation}
Then, $\pd{\gl(t, \theta)}{u_i} = \pd{\theta}{u_i}\, \pd{\gl(t, 
\theta)}{\theta}$.  Applying (\ref{Eqn9.5}) with $x = t$ and $1-t$, we obtain for the last two terms on the RHS of (\ref{Eqn9.4}) \par
\begin{multline}
\label{Eqn9.6}
\pd{\gl(t, \theta)}{u_i}\, \bn_1 \, + \,  \pd{\gl(1-t, \theta)}{u_i}\, \bn_0   
\,\,  = \,\,   \pd{\theta}{u_i} \big(  \frac{t\, \cos(t\, \theta)}{ 
\sin \theta} \bn_1 \, + \,   \frac{(1 - t)\, \cos((1 - t)\, \theta)}{ 
\sin \theta} \bn_0     \\
  \qquad  - \cot \theta \, (\gl(t, \theta)\, \bn_1 \, + \,  \gl(1-t, \theta)\, 
\bn_0 ) \big)  
\end{multline}
We see that the last expression in (\ref{Eqn9.6}) is a multiple of $\bn_t$.  
We can subtract a multiple of $\bn_t$ from  $\bn_{t\, u_i}$  without 
altering the rank of the matrix $N_t$.  Then, after subtracting 
$\pd{\theta}{u_i} \,\cot \theta \, \bn_t$ from the RHS of (\ref{Eqn9.6}), 
we obtain
\begin{equation}
\label{Eqn9.7}
\pd{\theta}{u_i} \big(  \frac{t\, \cos(t\, \theta)}{ \sin \theta} \bn_1 \, + 
\, 
\frac{(1 - t)\, \cos((1 - t)\, \theta)}{\sin \theta} \bn_0 \big)
\end{equation}
Then, in addition, we can subtract 
$\pd{\theta}{u_i} \, t \cot (t\, \theta) \, \bn_t$ from the RHS of 
(\ref{Eqn9.7}) so the term involving $\bn_1$ is removed.  We are left with
\begin{equation}
\label{Eqn9.8}
\pd{\theta}{u_i} \, \big(  \frac{(1- t)\, \cos((1- t)\,\theta)}{\sin \theta} 
\, -\, t \, \cot (t \theta)\, \frac{\sin ((1-t) \theta)}{\sin \theta} \big)\,  
\bn_0
\end{equation}
Adding the two terms in the parentheses in (\ref{Eqn9.8}), rearranging,  
and using the formula for $\sin (A + B)$, we obtain $\gs (t, \theta)$, so that 
(\ref{Eqn9.8}) becomes $\pd{\theta}{u_i}\, \gs (t, \theta)\, \bn_0$.  Thus, 
applying the preceding to each $\bn_{t\, u_i}$ we may replace each of 
them with 
$$ \gl(t, \theta)\, \bn_{1\, u_i} \, + \,  \gl(1-t, \theta)\, \bn_{0\, u_i}   \, 
+ \, \pd{\theta}{u_i}\, \gs (t, \theta)\, \bn_0  $$
without changing the rank.  We conclude that $N_t$ has the same rank as 
the matrix $N_t^{\prime}$ given in (\ref{Eqn9.2}).  \par
It remains to consider the case when $\theta = 0$.  Then, both 
$\pd{\gl}{\theta}(t, 0) = 0$ and $\pd{\gs}{\theta}(t, 0) = 0$, so that the nonsingularity 
reduces to that for $\tilde N_t(x)$.
\end{proof}
\par
\begin{SimpRemark} \par
If $\bn_1(\chi(x_0)) \neq \bn_0(x_0)$, then there is a neighborhood $x_0 
\in W \subset M_0$ such that $\bn_1(\chi(x)) \neq \bn_0(x)$ for 
$x \in W$.  Then, there is a smooth unit tangent vector field $\bw$ defined 
on $W$ such that $\bn_1(\chi(x))$ lies in the vector space spanned by 
$\bn_0(x)$ and $\bw(x)$, and $\bn_1(\chi(x)) \cdot \bw(x) \geq 0$ for all 
$x \in W$.  Then, smoothness follows explicitly using the geodesics given 
in Proposition \ref{prop4.1} by (\ref{Eqn3.4}).
\end{SimpRemark}

\vspace{1ex}

\section{\bf Results for the Case of Surfaces in $\R^3$}
\label{Sec:surf.case} 
\par
Now we consider the special case of surfaces $M_i \subset \R^3$, i = 1, 2 
for which there is a correspondence given by the diffeomorphism $\chi : 
M_0 \to M_1$.  We suppose each $M_i$ is a generic smooth surface with 
$\bn_0 = (a_1, a_2, a_3)$ and $\bn_1 = (a^{\prime}_1, a^{\prime}_2, 
a^{\prime}_3)$  smooth unit normal vector fields on $M_0$, respectively 
$M_1$.  We assume that $X(u_1, u_2)$ is a local parametrization of $M_0$.  Each 
$a_i$ is a function of  $(u_1, u_2)$ via the local parametrization $X(u_1, u_2)$.  
Likewise, each $a_i^{\prime}$ is a function of  $(u_1, u_2)$ via the local 
parametrization $\chi \circ X(u_1, u_2)$
Also, let $\bn_i(u) \cdot \bx = c_i(u)$ define the tangent planes for $M_0$ 
at $X(u_1, u_2)$, respectively  $M_1$ at $\chi (X(u_1, u_2))$.  \par 
We let
$$ \bn_t \,\, = \,\,  (a_{1 \, t}, a_{2 \, t}, a_{3 \, t}) \,\, = \,\, \gl(t, 
\theta)\, (a^{\prime}_1, a^{\prime}_2, a^{\prime}_3) \, + \,  \gl(1-t, 
\theta)\, (a_1, a_2, a_3)  $$
and $c_t(u) = \gl(t, \theta)\, c_1  \, + \,  \gl(1-t, \theta)\, c_0$.  Then,
\begin{equation}
\label{Eqn10.2} 
N_t \, \, = \,\, 
\begin{pmatrix} 
a_{1 \, t}  &  a_{1 \, t, u_1}  &  a_{1 \, t, u_2}  \\
a_{2 \, t}  &  a_{2 \, t, u_1}  &  a_{2 \, t, u_2}  \\
a_{3 \, t}  &  a_{3 \, t, u_1}  &  a_{3 \, t, u_2} 
\end{pmatrix}
\end{equation}
\begin{SimpRemark} \par
Note here and what follows we use the following notation.  For quantities 
defined for a flow, we denote dependence on $t$ by a subscript.  We also 
want to denote partial derivatives with respect to the parameters $u_i$ 
by a subscript.  To distinguish them, the subscripts appearing after a comma 
will denote the partial derivatives.  Hence, for example, in (\ref{Eqn10.2}) 
$a_{i \, t, u_j} = \pd{a_{i \, t}}{u_j}$. 
\end{SimpRemark}
\subsection*{\it Existence of Envelope Points}  
\par
The sufficient condition that there is a unique point $X_{t_0}(u)$ in the 
Lorentzian geodesic flow in $\R^3$ at time $t = t_0$  is that 
(\ref{Eqn10.2}) 
evaluated at $t = t_0$ and $u = (u_1, u_2)$ is nonsingular.  
Then, the unique point is the solution of the linear system. 
\begin{equation}
\label{Eqn10.4} 
N_{t_0}^T \cdot \bx \, \, = \, \, \bc
\end{equation}
with $\bx$ and $\bc$  column matrices with entries $x_1, x_2, x_3$, 
respectively $c_{t_0}, c_{t_0, u_1},c_{t_0, u_2}$, 
$a_{i \, t, u_j} = \pd{a_{i \, t}}{u_j}$, and $N_{t_0}$ is given by (\ref{Eqn10.2}). \par
Furthermore, the nonsingularity of (\ref{Eqn10.2}) is equivalent to that of 
(\ref{Eqn10.3}).
\begin{equation}
\label{Eqn10.3} 
N^{\prime}_{t_0}\,\, = \,\, \gl(t_0, \theta) \, N_1 \, + \,  \gl(1- t_0, 
\theta) \, N_0  +  \gs (t_0, \theta) \, \pd{\theta}{\bu}\, \bn_0
\end{equation}
where 
\begin{equation}
\label{Eqn10.3a}
\pd{\theta}{\bu}\, \bn_0 \,\, = \,\, 
\begin{pmatrix} 
 0  &  \theta_{u_1} a_{1}  &   \theta_{u_2} a_{1}  \\
 0  & \theta_{u_1}  a_{2}  &  \theta_{u_2} a_{2}  \\
  0 &  \theta_{u_1} a_{3}  &  \theta_{u_2} a_{3} 
\end{pmatrix}
\end{equation}
\par
\subsection*{\it Smoothness of the Envelope}  
For the smoothness of $M_{t_0}$ at the point $X_{t_0}(u_1, u_2)$, we let 
$$  \tilde \bn_{t_0} = (a_{1 \, t_0}, a_{2 \, t_0}, a_{3 \, t_0}, - c_{t_0})$$ 
 evaluated at $u = (u_1, u_2)$.   Also, we let $\tilde \bh_{t_0} = \tilde\bn_{t_0} \times 
\tilde\bn_{t_0\, u_1} \times \tilde\bn_{t_0\, u_1}$, which is the analogue of the 
cross 
product but for vectors in $\R^4$.  It is the vector whose $j$--th entry is 
$(-1)^{j+1}$ times by taking the $3 \times 3$ determinant of the 
submatrix 
obtained by deleting the $j$--th column of 
\begin{equation}
\label{Eqn10.5}
\begin{pmatrix} 
a_{1 \, t_0}  &  a_{2 \, t_0}  & a_{3 \, t_0}  &  -c_{t_0}  \\
a_{1 \, t_0,  u_1}  &  a_{2 \, t_0, u_1}  &  a_{3 \, t_0, u_1}  &  
-c_{t_0, u_1}  \\
a_{1 \, t_0, u_2}  &  a_{2 \, t_0, u_2}  &  a_{3 \, t_0, u_2} &  -c_{t_0, 
u_2} 
\end{pmatrix}
\end{equation}
\par 
Then, we form the $2 \times 2$--matrix $H(\tilde \bn_t(u)) \cdot \tilde \bh_{t})$ 
with $ij$--th entry $\bn_{t, u_i u_j}(u)~\cdot~\tilde \bh_t (u)$ for $u = (u_1, 
u_2)$.  Then, from Theorem \ref{Thm9.2}, we conclude that for a point 
uniquely defined by (\ref{Eqn10.4}) the envelope is smooth at $X_{t_0}(u)$ 
if $H(\tilde \bn_{t_0}(u))\cdot \tilde \bh_{t_0}(u)$ is nonsingular.  \par
\subsection*{\it Envelope Points corresponding to Legendrian Singular 
Points}
Third, the generic Legendrian singularities for surfaces are those given in 
Fig. \ref{fig.1}).  For these: \par
\begin{enumerate} 
\item  At points on cuspidal edges or swallowtail points $z \in \tilde 
M_t$, there is a unique point on $M_t$ which is the unique limit of the envelope 
points corresponding to smooth points of $\tilde M_t$ approaching $z$.
\item  At points $z \in \tilde M_t$ which are tranverse intersections of 
two or three smooth surfaces, or the transverse intersection of a smooth surface
 and a cuspidal edge, there is a unique point in $M_t$ for each smooth surface 
passing through $z$ (and one for the cuspidal edge).  
\end{enumerate}
\par 
\begin{Example} 
\label{Ex10.6}
As an example, we consider the Lorentzian geodesic flows between the surfaces $M_1$ given by $z = 2 - .2 (x^2 + y^2)$, $M_2$ given by $z = .5 - .05 (x^2 + y^2)$, 
and $M_3$ given by $z = 4 - .5 (x^2 + y^2)$.  We consider two correspondences 
and the resulting Lorentzian geodesic flow between them.  The first assigns to 
each point in $M_1$ the point in $M_3$ with the same coordinates $(x, y)$ so the points on the same vertical lines correspond.  For the second, each point in $M_2$ corresponds to the point in $M_3$ in the same vertical line.  For the third, we assign to each point $(x, y, z)$ of $M_2$ the point $(\frac{2}{5} x, \frac{2}{5} y, \frac{2}{5} z + 3.2)$ in $M_3$. \par
Although for the first two there are simple Euclidean geodesic flows in $\R^3$ along the vertical lines, these are not the Lorentzian geodesic flow lines.  
\par
\par
\begin{figure}[ht]
\begin{center} 
\includegraphics[width=6cm]{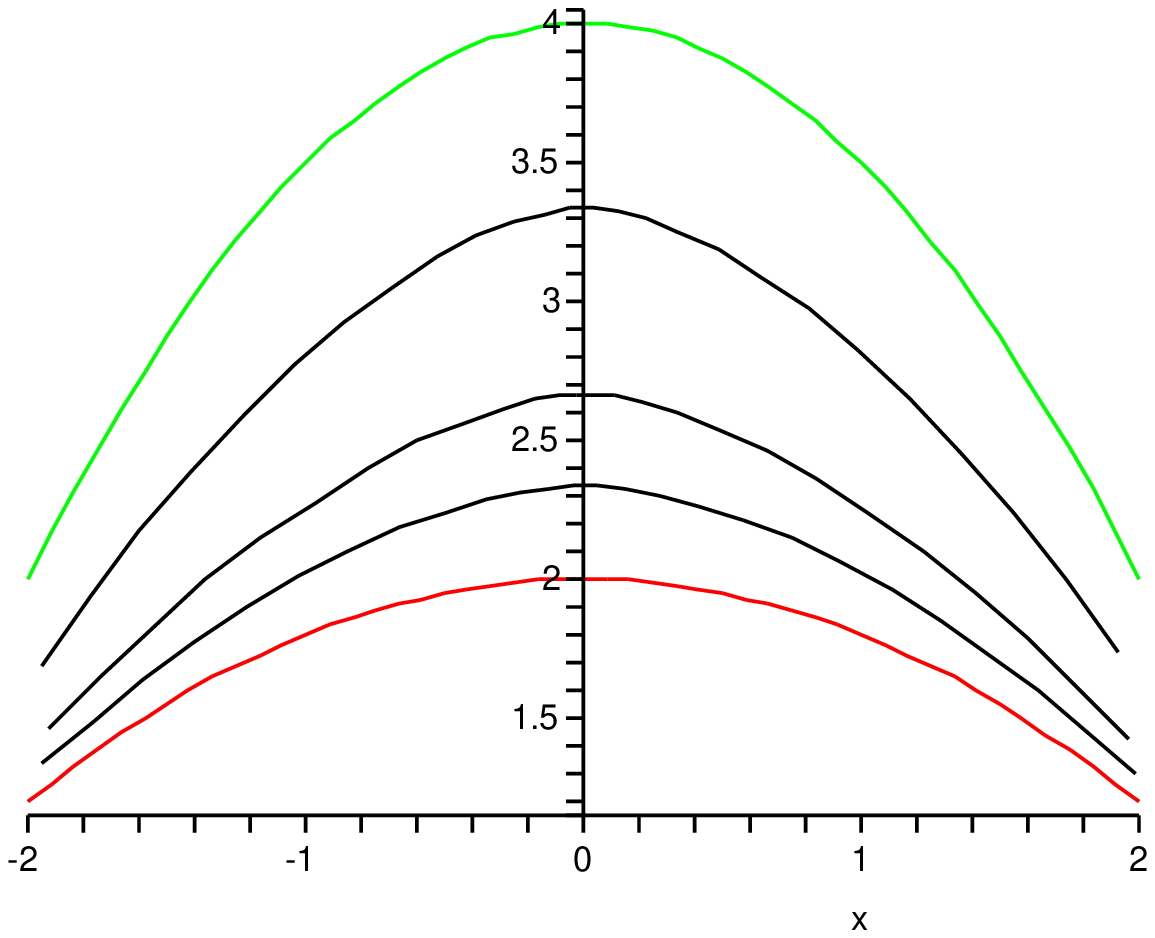}  \hspace{.20in} 
\includegraphics[width=5.5cm]{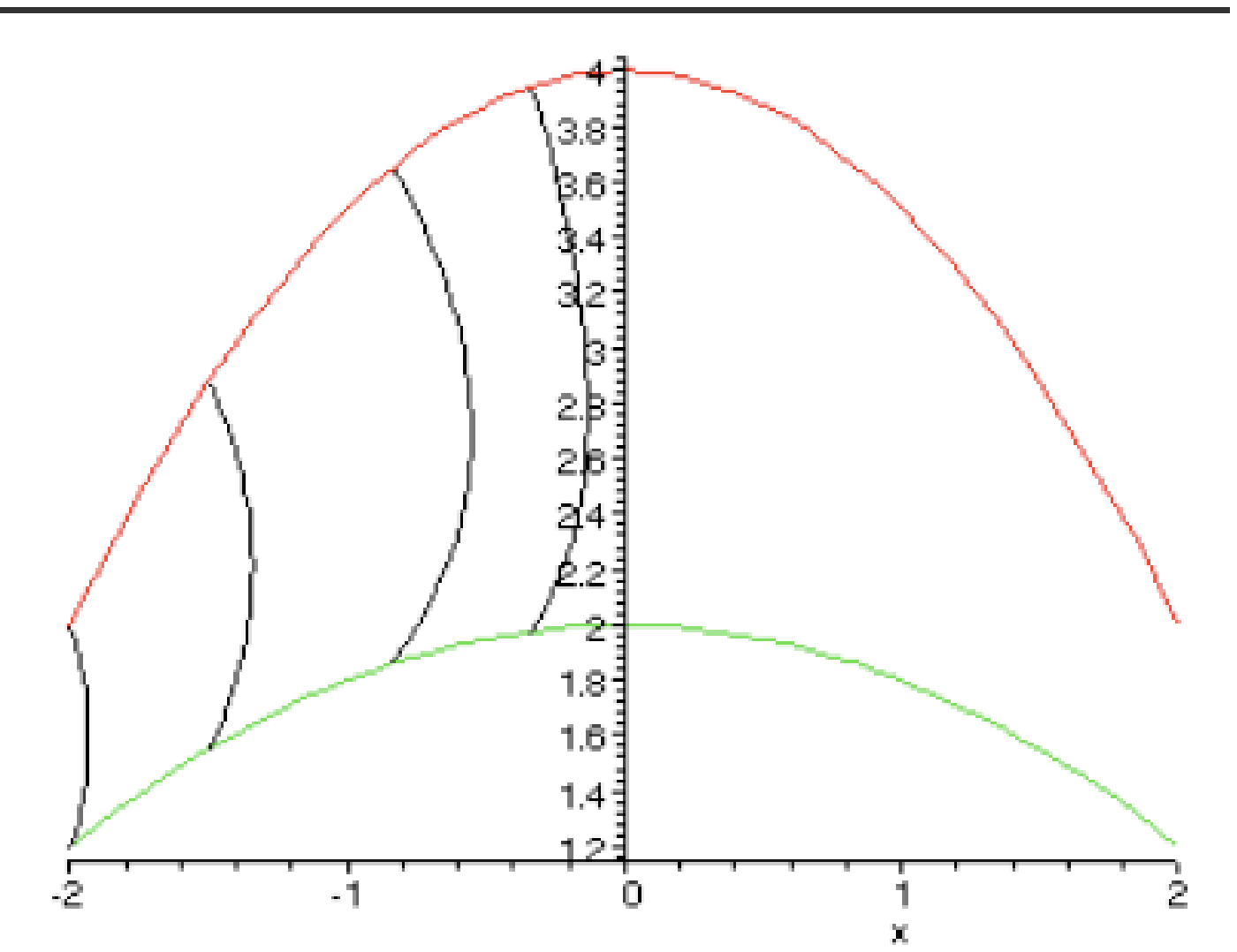}
\end{center}
\hspace{0.4in} (a) \hspace{1.8in} (b) 
 \caption{\label{fig.11} The Lorentzian geodesic flow between $M_1$ and $M_3$ viewed in a vertical plane through the $z$-axis.  In a) are shown the nonsingular level surfaces of the flow and in b) the corresponding geodesic flow curves.  The 
nonsingularity of flow is seen in b) with the geodesic flow curves not intersecting.}   
\end{figure} 
\par 
\begin{figure}[ht]
\begin{center} 
\includegraphics[width=6.0cm]{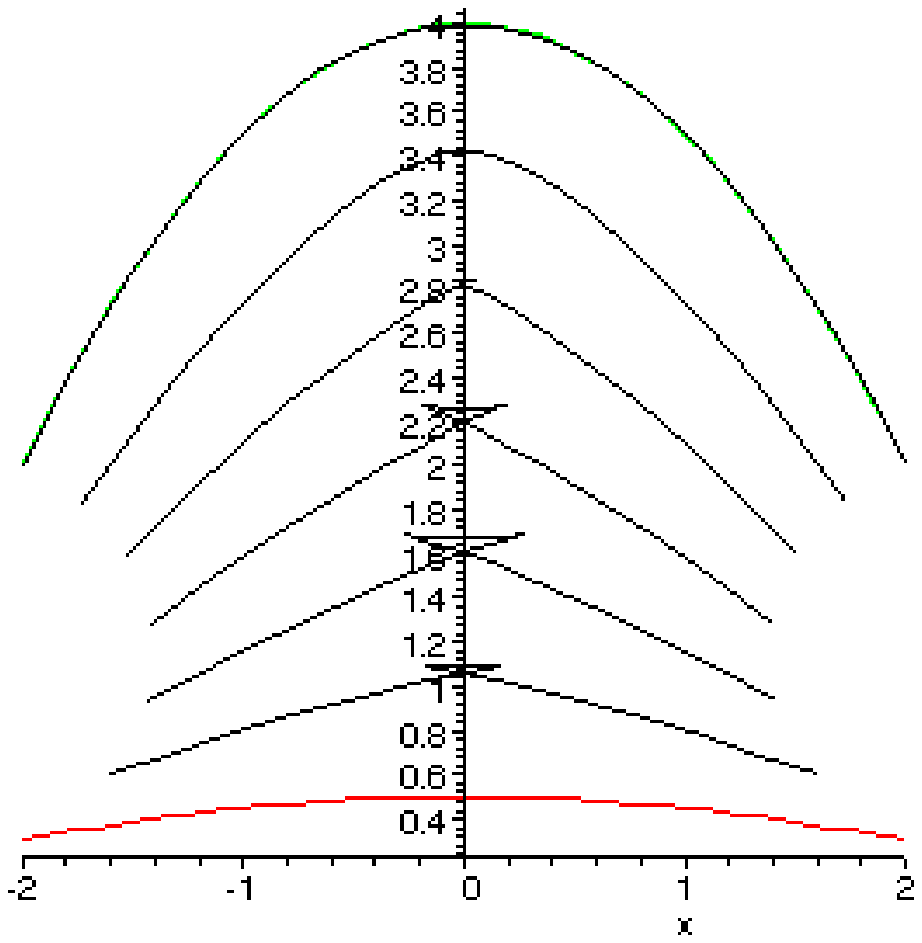}  \hspace{.20in} 
\includegraphics[width=5.5cm]{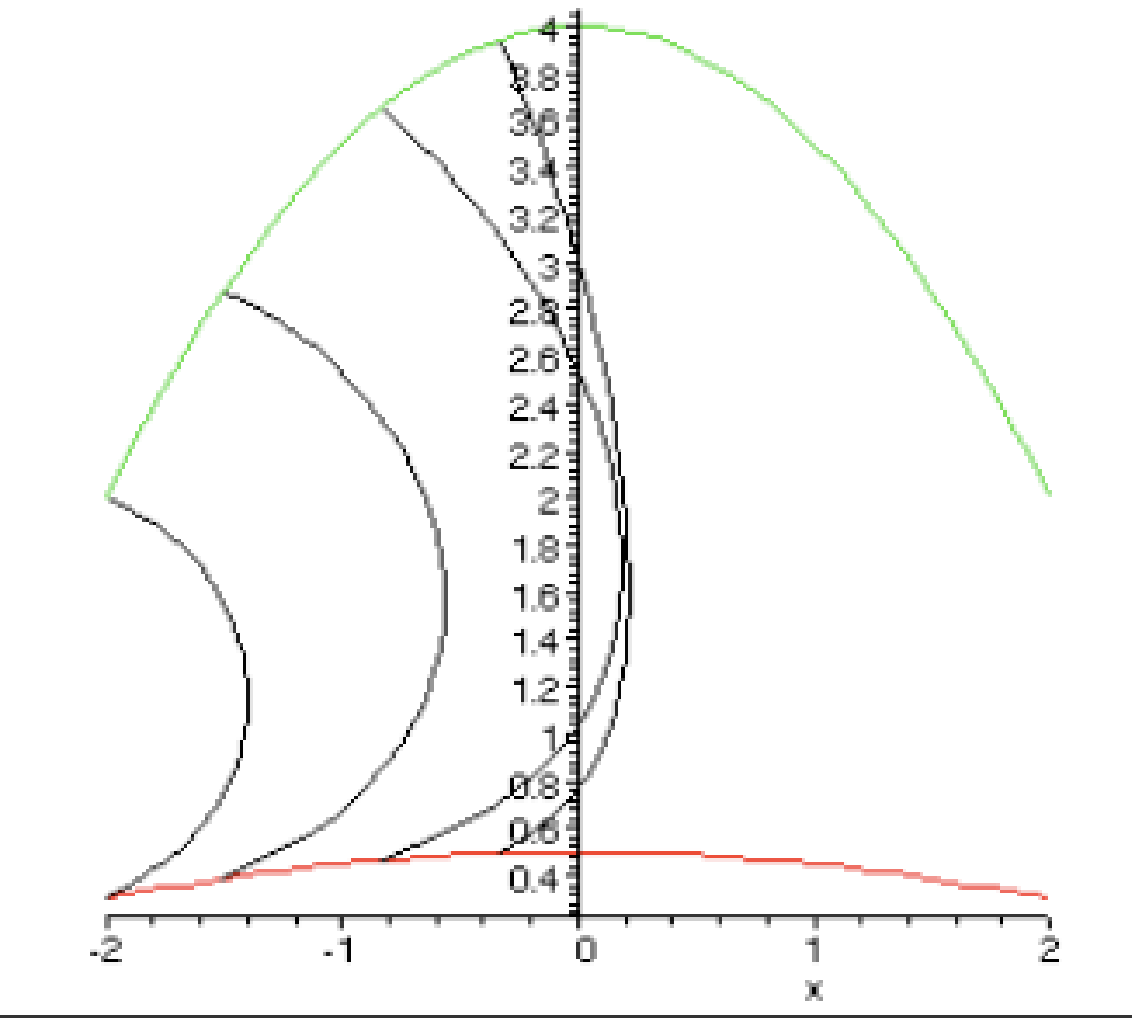} \hspace{.10in}
\end{center}
\hspace{0.4in} (a) \hspace{1.6in} (b)  
 \caption{\label{fig.12} Comparison of Lorentzian geodesic flows between $M_2$ and $M_3$ in a vertical plane through the $z$-axis. 
In a) the level sets exhibit cusp singularity formation.  In b) are shown the Lorentzian 
geodesic curves which intersect and produce the singularities.}
\end{figure} 
\par
For the third, $M_3$ is obtained from $M_2$ by a combination of the homothety of multiplication by $\frac{2}{5}$ combined with the translation by $(0, 0, 3.2)$.  Hence, for the second case the Lorentzian geodesic flow is given by Corollary \ref{Cor5.6} to be along the lines joining the corresponding points and is given by 
$$(x, y, z) \, \mapsto \, ((1- \frac{3}{5} t) x, (1- \frac{3}{5} t) y, (1- \frac{3}{5} t) z + 3.2 t) \, .$$ \par
By the circular symmetry of each surface about the $z$-axis, we may view the 
Lorentzian geodesics in a vertical plane through the $z$-axis.  We may compute both the level sets of the Lorentzian geodesic flow and the corresponding geodesics 
using Proposition \ref{Prop5.1} and solving the systems of equations (\ref{Eqn3.8}).  We show the results of the computations using the software Maple in Figures \ref{fig.11} and \ref{fig.12}.  The Lorentzian geodesic flow between $M_1$ and $M_3$ with the vertical correspondence is nonsingular, as shown by the level sets and geodesic curves in Figure \ref{fig.11}.  
By comparison, the Lorentzian geodesic flow between $M_2$ and $M_3$ for the vertical correspondence is singular.  We see the cusp formation in the level sets in Figure \ref{fig.12} a).  The singularities result from the intersection of the geodesics seen in  and the individual flow curves in b).  We also see that the increased bending of the 
geodesics versus those in Figure \ref{fig.11} result from the increases in the changes in tangent directions, leading to the formation of cusp singularities.  By contrast, for the second correspondence resulting from the action of the element of the extended Poincare group, geodesics are straight lines as shown in Figure \ref{fig.13} and the flow is 
nonsingular.  
\par
\begin{figure}[ht]
\begin{center} 
\includegraphics[width=6.0cm]{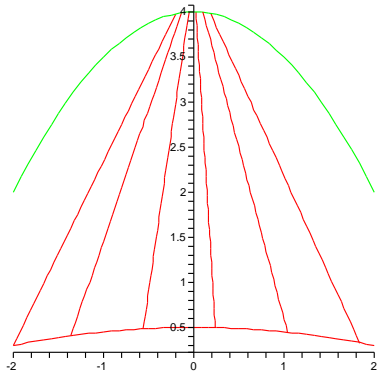}
\end{center}
 \caption{\label{fig.13} Lorentzian geodesic flow between $M_2$ and $M_3$ in a vertical plane through the $z$-axis for the correspondence arising from the action of an element of the extended Poincare group given by a homothety combined with a translation.  The geodesics are straight lines and the flow is nonsingular.}
\end{figure} 
\par
\begin{Remark} For the surfaces, the flows are obtained by rotating the planar figures in Figures \ref{fig.11}, \ref{fig.12} and \ref{fig.13}.  The rotation of Figure \ref{fig.12} yields a circular cusp edge which evolves from a single point on the axis of symmetry.  Hence, the creation point for the cusp singularities does not have generic form. 
\end{Remark}
\end{Example}

\end{document}